\begin{document}

\title{Zero-Order One-Point Estimate with Distributed Stochastic Gradient-Tracking Technique}

\author{\name Elissa Mhanna \email elissa.mhanna@centralesupelec.fr \\
		\name Mohamad Assaad \email mohamad.assaad@centralesupelec.fr \\
       \addr  Laboratoire des Signaux \& Syst\`{e}mes\\
       CentraleSup\'{e}lec\\
       3 rue Joliot Curie\\
       91190 Gif-sur-Yvette, France}

\editor{My editor}

\maketitle

\begin{abstract}
	In this work, we consider a distributed multi-agent stochastic optimization problem, where each agent holds a local objective function that is smooth and convex, and that is subject to a stochastic process. The goal is for all agents to collaborate to find a common solution that optimizes the sum of these local functions. With the practical assumption that agents can only obtain noisy numerical function queries at exactly one point at a time, we extend the distributed stochastic gradient-tracking method to the bandit setting where we don't have an estimate of the gradient, and we introduce a zero-order (ZO) one-point estimate (1P-DSGT). We analyze the convergence of this novel technique for smooth and convex objectives using stochastic approximation tools, and we prove that it converges almost surely to the optimum. We then study the convergence rate for when the objectives are additionally strongly convex. We obtain a rate of $O(\frac{1}{\sqrt{k}})$ after a sufficient number of iterations $k > K_2$ which is usually optimal for techniques utilizing one-point estimators. We also provide a regret bound of $O(\sqrt{k})$, which is exceptionally good compared to the aforementioned techniques. We further illustrate the usefulness of the proposed technique using numerical experiments.
\end{abstract}

\begin{keywords}
  Gradient-free optimization, stochastic framework, distributed algorithms,
  consensus, gradient tracking, convergence analysis
\end{keywords}

\section{Introduction}

Gradient-free optimization is an old topic in the research community; however, there has been an increased interest recently, especially in machine learning applications, where optimization problems are typically solved with gradient descent algorithms. Successful applications of gradient-free methods in machine learning include competing with an adversary in bandit problems \citep{ref1,ref2-ref}, generating adversarial attacks for deep learning models \citep{ml1,ml2} and reinforcement learning \citep{ml3}.
Gradient-free optimization aims to solve optimization problems with only functional (zero-order) information rather than first-order (FO) gradient information. These techniques are essential in settings where explicit gradient computation may be impractical, expensive, or impossible. Instances of such settings include high data dimensionality, time or resource straining function differentiation, or the cost function not having a closed-form. Zero-order information-based methods include direct search methods \citep{ref3}, 1-point methods \citep{1,ref1,ref6, ml3} where the function is evaluated at a single point with some randomization to estimate the gradient, 2- or more point methods \citep{ref2,ref4,ref5,ref6,zo-dstr,zo-dstr2,ref2-ref, ml1, ml2,ml3}, where functional difference at various points is employed for estimation, and other  methods such as sign information of gradient estimates \citep{ml2}.

Another area of great interest is distributed multi-agent optimization, where agents try to cooperatively solve a problem with information exchange only limited to immediate neighbors in the network. Distributed computing and data storing are particularly essential in fields such as vehicular communications and coordination, data processing and distributed control in sensor networks \citep{app3}, big-data analytics \citep{app2}, and federated learning \citep{app1}. More specifically, one direction of research integrates (sub)gradient-based methods with a consensus/averaging strategy; the local agent incorporates one or multiple consensus steps alongside evaluating the local gradient during optimization. Hence, these algorithms can tackle a fundamental challenge: overcoming differences between agents' local data distributions. 

\subsection{Problem Description}
Consider a set of agents $\mathcal{N} = \{1, 2, \ldots , n\}$ connected by a communication
network.
Each agent $i$ is associated with a local objective function $f_i: \mathbb{R}^d\rightarrow\mathbb{R}$. The global goal of the agents is to collaboratively locate the decision variable $x\in\mathbb{R}^d$ that solves the stochastic optimization problem:
\begin{equation}\label{objective}
	min_{x\in\mathbb{R}^d} \mathcal{F}(x)=  \frac{1}{n}\sum_{i=1}^{n}F_i(x) 
\end{equation}
where
\begin{equation*}
	F_i(x)=\mathbb{E}_S f_i(x,S),
\end{equation*}
with $S\in\mathcal{S}$ denoting an i.i.d. ergodic stochastic process describing uncertainties in the communication system.

We assume that at each time step, agent $i$ can only query the function values of $f_i$ at exactly one point, and can only communicate with its neighbors. Further, we assume that the function queries are noisy $\tilde{f}_{i}=f_{i}+\zeta_{i}$ with $\zeta_{i}$ some additive noise. Agent $i$ must then employ this query to estimate the gradient of the form $g_i(x, S_i)$.

One efficient algorithm with a straightforward averaging scheme to solve this problem is the gradient-tracking (GT) technique, which has proven to achieve rates competing with its centralized counterparts. For example, the acquired error bound under a distributed stochastic variant was found to decrease with the network size n \citep{nedic}. In most work, this technique proved to converge linearly to the optimal solution with constant step size \citep{gr-tr1, gr-tr3,shipu}, which is also a unique attribute among other distributed stochastic gradient algorithms. It has been extended to time-varying (undirected or directed) graphs \citep{gr-tr3}, a gossip-like method which is efficient in communication \citep{nedic}, and nonconvex settings \citep{ref7, gr-tr2, gr-tr6, st-gr-tr2}. All references mentioned above consider the case where an accurate gradient computation or a non-biased gradient estimation with bounded variance (BV) is available.
\subsection{Function Classes}
Consider the following five classes of functions: 
\begin{itemize}
	\item The convex class $\mathcal{C}_{cvx}$ containing all functions $f: \mathbb{R}^d \rightarrow \mathbb{R}$ that are convex.
	
	\item The strongly convex class $\mathcal{C}_{sc}$ containing all functions $f: \mathbb{R}^d \rightarrow \mathbb{R}$ that are continuously differentiable and admit a constant $\mu_f$ such that
	$$\langle\nabla f(x)-\nabla f(y),x-y\rangle\| \geq \mu_f\|x-y\|^2,\;\;\forall x, y \in \mathbb{R}^d.$$
	
	\item The Lipschitz continuous class $\mathcal{C}_{lip}$ containing all functions $f: \mathbb{R}^d \rightarrow \mathbb{R}$ that admit a constant $L_f$ such that
	$$|f(x)-f(y)| \leq L_f\|x-y\|, \;\;\forall x, y \in \mathbb{R}^d.$$
	
	\item The smooth class $\mathcal{C}_{smo}$ containing all functions $f: \mathbb{R}^d \rightarrow \mathbb{R}$ that are continuously differentiable and admit a constant $G_f$ such that
	$$\|\nabla f(x)-\nabla f(y)\| \leq G_f\|x-y\|,\;\;\forall x, y \in \mathbb{R}^d.$$
	\item The gradient dominated class $\mathcal{C}_{gd}$ containing all functions $f: \mathbb{R}^d \rightarrow \mathbb{R}$ that are differentiable, have a global minimizer $x^{*}$, and admit a constant $\nu_f$ such that 
	$$2\nu_f(f(x)-f(x^{*})) \leq \|\nabla f(x)\|^2,\;\;\forall x \in \mathbb{R}^d.$$
\end{itemize}
\begin{table}[t]\footnotesize
	\label{sample-table}
	\begin{center}
		\begin{tabular}{c|c|ccccc}
			\multicolumn{1}{c}{} &\multicolumn{1}{c}{\bf \makecell{GRADIENT\\ESTIMATE}} &\multicolumn{1}{c}{\bf OP} &\multicolumn{1}{c}{\bf \makecell{FUNCTION\\CLASS}}  &\multicolumn{1}{c}{\bf \makecell{CONS-\\ENSUS }} &\multicolumn{1}{c}{\bf \makecell{REGRET\\BOUND}} &\multicolumn{1}{c}{\bf \makecell{CONVERGENCE\\RATE}}
			\\ \hline \\
			\multirow{21}{*}{ZO} &\multirow{3}{*}{One-point} &Cent. 
			&$\mathcal{C}_{cvx}\bigcap\mathcal{C}_{lip}$ &-  &$O(k^{\frac{3}{4}})$ &$O(\frac{1}{\sqrt[4]{k}})$ {\tiny\cite{ref1}} \\
			&\; &Dist.
			&$\mathcal{C}_{sc}\bigcap\mathcal{C}_{lip}\bigcap\mathcal{C}_{smo}$ &None &- &$O(\frac{1}{\sqrt{k}})$ {\tiny\cite{1}} \\
			&\; &Dist. &$\mathcal{C}_{sc}\bigcap\mathcal{C}_{lip}\bigcap\mathcal{C}_{smo}$ &GT &$O(\sqrt{k})$ &$O(\frac{1}{\sqrt{k}})$ {\scriptsize 1P-DSGT} \\ 
			\\ \cline{2-7} \\
			&\multirow{4}{*}{Two-point} &Cent.  &$\mathcal{C}_{cvx}\bigcap\mathcal{C}_{lip}$    &- &$O(\sqrt{k})$ &$O(\frac{1}{\sqrt{k}})$ {\tiny\cite{ref2-ref}} \\
			&\; &Cent.  &$\mathcal{C}_{sc}\bigcap\mathcal{C}_{lip}$    &- &$O(\log k)$ &$O(\frac{\log k}{k})${\tiny\cite{ref2-ref}} \\
			&\; &Dist. &$\mathcal{C}_{lip}\bigcap\mathcal{C}_{smo}$    &None &- &$O(\frac{1}{\sqrt{k}}\log k)$ {\tiny\cite{ref7}} \\
			&\; &Dist.		&$\mathcal{C}_{smo}\bigcap\mathcal{C}_{gd}$    &None &- &$O(\frac{1}{k})$ {\tiny\cite{ref7}} \\
			\\ \cline{2-7} \\
			&(d+1)-point &Cent. &$\mathcal{C}_{sc}\bigcap\mathcal{C}_{lip}\bigcap\mathcal{C}_{smo}$ &-   &$O(\log k)$ &$O(\frac{\log k}{k})$ {\tiny\cite{ref2-ref}} \\
			\\ \cline{2-7} \\
			&\multirow{2}{*}{2d-point} &Dist. &$\mathcal{C}_{smo}$ &GT  &- &$O(\frac{1}{k})$ {\tiny\cite{ref7}}\\
			&\; &Dist.
			&$\mathcal{C}_{smo}\bigcap\mathcal{C}_{gd}$    &GT &- &$O(\lambda^k)$ {\tiny\cite{ref7}} \\
			\\ \cline{2-7} \\
			&Kernel-based &Cent. &$\mathcal{C}_{cvx}\bigcap\mathcal{C}_{lip}$ &-   &$O(\sqrt{k})$ &$O(\frac{1}{\sqrt{k}})$ {\tiny\cite{bubeck}} \\
			\\ \hline \\
			\multirow{3}{*}{FO} &\multirow{3}{*}{Unbiased/BV} &Dist.
			&$\mathcal{C}_{sc}\bigcap\mathcal{C}_{smo}$ &GT   &- &$O(\lambda^k)$ {\tiny\cite{nedic}}; \\
			&\; &\; &\; &\; &\; &{\tiny\cite{st-gr-tr,shipu}}\\
			&\; &Dist.
			&$\mathcal{C}_{smo}$ &GT &-  &$O(\frac{1}{\sqrt{k}})$ {\tiny\cite{st-gr-tr2}} \\
		\end{tabular}
	\end{center} 
\caption{Convergence rates for various algorithms related to our work, classified according to the nature of the gradient estimate, whether the optimization problem (OP) is centralized or distributed, the assumptions on the objective function, whether consensus is aimed at or not, and the achieved regret bound and convergence rate}
\end{table}
\subsection{Related Work}
An adversarial convex bandit problem in a zero-order framework is studied in \citet{ref1, ref2-ref, bubeck}. In \citet{ref1}, a one point estimator is proposed and a regret bound of  $O(k^{\frac{3}{4}})$ is achieved for Lipschitz continuous functions. In \citet{ref2-ref}, they extend the problem to the multi-point setting, where the player queries each loss function at many points; hence, their estimates are unbiased with respect to the true gradient and have bounded variance. When the number of points is two, they prove regret bounds of $\tilde{O}(\sqrt{k})$ with high probability and of $O(\log(k))$ in expectation for strongly convex loss functions. When the number is $d+1$ point, they prove regret bounds of $O(\sqrt{k})$ and of  $O(\log(k))$ with strong convexity.  As mentioned in \citet{ref2-ref} and the references therein, these bounds achieved are even optimal for the full-information setting. In \citet{bubeck},  a kernelized loss estimator is proposed where a generalization of Bernoulli convolutions is adopted, and an annealing schedule for exponential weights is used to control the estimator's variance in a focus region for dimensions higher than 1. \citet{bubeck} achieves a regret bound of  $O(\sqrt{k})$. However, all these references solve the bandit problem in a centralized framework or the case where a single agent has all the data and is the only one responsible for the decision-making. 

\citet{1} consider a distributed stochastic optimization problem in networks under the assumption of smoothness and concavity. They solve it using a stochastic gradient descent method, where the exchange between the nodes is limited to a partial trade of the observation of their local utilities, which each agent then uses to estimate the global utility. With this estimated global utility, each agent can construct a one-point gradient estimator based on a stochastic perturbation. The convergence rate is proved to scale as $O(\frac{1}{\sqrt{k}})$, under the further assumption of a strongly concave objective function. We must remark that in \citet{1}, there is a distributed solution, the optimization variable is a scalar, not a vector, and each agent can only update its own scalar. Hence, there is no consensus to be aimed at, as in our case. In the context of the derivative-free stochastic \emph{centralized} optimization, all \citet{ref2, cv-rate, cv-rate2} have derived several lower bounds to confirm that the convergence rate cannot be better than $O(\frac{1}{\sqrt{k}})$ after k iterations for strongly convex and smooth objective functions.  

All \citet{ gr-tr1, gr-tr2, gr-tr3, gr-tr4, gr-tr5, gr-tr6} present a distributed gradient-tracking method that employs local auxiliary variables to track the average of all agents' gradients, considering the availability of accurate gradient information. In both \citet{gr-tr5, gr-tr6}, each local objective function is an average of finite instantaneous functions. Thus, they incorporate the gradient-tracking algorithm with stochastic averaging gradient technology \citep{gr-tr5} (smooth convex optimization) or with variance reduction techniques \citep{gr-tr6} (smooth nonconvex optimization). At each iteration, they randomly select only one gradient of an instantaneous function to approximate the local batch gradient. In \citet{gr-tr5}, this is an unbiased estimate of the local gradient, whereas, in \citet{gr-tr6}, it is biased. Nevertheless, both references assume access to an exact gradient oracle.

In \citet{ref7}, they develop two algorithms for nonconvex multi-agent optimization. One is a gradient-tracking algorithm based on a noise-free 2d-point estimator of the gradient, achieving a rate of $O(\frac{1}{k})$ with smoothness assumptions and a linear rate for an extra $\nu$-gradient dominated objective assumption. The other is based on a 2-point unbiased estimator without global gradient tracking and achieves a rate of $O(\frac{1}{\sqrt{k}}\log k)$ under Lipschitz continuity and smoothness conditions and $O(\frac{1}{k})$ under an extra gradient dominated function structure. Both these gradient estimators are unbiased with respect to the exact gradient and even have a vanishing variance.

All \citet{nedic, st-gr-tr, shipu, st-gr-tr2} assume access to local stochastic first-order oracles where the gradient is unbiased and with a bounded variance. In the first three, they additionally assume smooth and strongly-convex local objectives, and they all accomplish a linear convergence rate under a constant step size. \citet{nedic} proposes a distributed stochastic gradient-tracking method (DSGT) and a gossip-like stochastic gradient-tracking method (GSGT) where at each round, each agent wakes up with a certain probability. Further, in \citet{nedic}, when the step-size is diminishing, the convergence rate is that of $O(\frac{1}{k})$. \citet{st-gr-tr} employs a gradient-tracking algorithm for agents communicating over a strongly-connected graph. \citet{shipu} introduces a robust gradient-tracking method (R-Push-Pull) in the context of noisy information exchange between agents and with a directed network topology. In \citet{st-gr-tr2}, they propose a gradient-tracking based nonconvex stochastic decentralized (GNSD) algorithm for nonconvex optimization problems in machine learning, and they fulfill a convergence rate of $O(\frac{1}{\sqrt{k}})$ under constant step size.

\subsection{Contributions}
In this paper, we consider smooth and convex local objectives, and we extend the gradient-tracking algorithm to the case where we do not have an estimation of the gradient. Under the realistic assumption that the agent only has access to a single noisy function value at each time, without necessarily knowing the form of this function, we propose a one-point estimator in a stochastic framework. Naturally, one-point estimators are biased with respect to the true gradient and suffer from high variance \citep{z-o} hence they do not match the assumptions for convergence presented in \citet{ref7, nedic, st-gr-tr, shipu, st-gr-tr2}. However, in this work, we analyze and indeed prove the convergence of the algorithm with a biased estimate. We also consider that a stochastic process influences the objective function from one iteration to the other, which is not the case in the aforementioned references. We then study the convergence rate and prove that a rate of $O(\frac{1}{\sqrt{k}})$ after a sufficient number of iterations $k > K_2$ is attainable for smooth and strongly convex objectives. This rate satisfies the lower bounds achieved by its centralized counterparts in the same derivative-free setting \citep{ref2, cv-rate, cv-rate2}. Finally, we show that a regret bound of $O(\sqrt{k})$ is achieved for this algorithm. 

\subsection{Notation}
In all that follows, vectors are column-shaped unless defined otherwise and $\mathbf{1}$ denotes the vector of all entries equal to $1$.  For two vectors $a$, $b$ of the same dimension, $\langle a,b\rangle$ is the inner product. For two matrices $A$, $B\in\mathbb{R}^{n\times d}$, we define
\begin{equation*}
	\langle A,B\rangle =\sum_{i=1}^{n}\langle A_i,B_i\rangle
\end{equation*}
where $A_i$ (respectively, $B_i$) represents the $i$-th row of $A$ (respectively, $B$).  $\|.\|$  denotes	the $2$-norm for vectors and the Frobenius norm for matrices.

We assume that each agent $i$ maintains a local copy $x_i\in\mathbb{R}^d$ of the decision variable and another auxiliary variable $y_i\in\mathbb{R}^d$ and each agent's local function is subject to the stochastic variable $S_i\in\mathbb{R}^m$. At iteration $k$, the respective values are denoted as $x_{i,k}$, $y_{i,k}$, and $S_{i,k}$.  Bold notations denote the concatenated version of the variables, i.e.,
\begin{equation*}
	\begin{split}
		\mathbf{x}:=[x_1, x_2, \ldots , x_n]^T ,\; \mathbf{y}:=[y_1, y_2, \ldots, y_n]^T\in\mathbb{R}^{n\times d}, \;\text{and}\;\mathbf{S}:= [S_1,S_2,\ldots,S_n]^T \in\mathbb{R}^{n\times m}.
	\end{split}
\end{equation*}
We then define the means of the previous two variables as $\bar{x}:=\frac{1}{n}\mathbf{1}^T \mathbf{x}$ and $\bar{y}:=\frac{1}{n}\mathbf{1}^T \mathbf{y} \in\mathbb{R}^{1\times d}$. 

We define the gradient of $F_i$ at the local variable $\nabla F_i(x_i)\in\mathbb{R}^{d}$ and its Hessian matrix $\nabla^2 F_i(x_i)\in\mathbb{R}^{d\times d}$ and we let
\begin{equation*}
	\nabla F(\mathbf{x}):=[\nabla F_1(x_1),\nabla F_2(x_2),\ldots,\nabla F_n(x_n)]^T \in\mathbb{R}^{n\times d}
\end{equation*}
and
\begin{equation*}
	\begin{split}
		\mathbf{g}:=g(\mathbf{x},\mathbf{S}):=[g_1(x_1,S_1),g_2(x_2,S_2),\ldots,g_n(x_n,S_n)]^T \in\mathbb{R}^{n\times d}.
	\end{split}
\end{equation*}
We define its mean $\bar{g} := \frac{1}{n}\mathbf{1}^T \mathbf{g}\in\mathbb{R}^{1\times d}$ and we denote each agent's gradient estimate at time $k$ by $g_{i,k}=g_i(x_{i,k},S_{i,k})$.

\subsection{Basic Assumptions}
In this subsection, we introduce the fundamental assumptions that ensure the performance of the 1P-DSGT algorithm.

\newtheorem{assumption}{Assumption}
\begin{assumption}\label{network}(on the graph)
	The topology of the network is represented by the graph $\mathcal{G} = (\mathcal{N}, \mathcal{E})$ where the edges in $\mathcal{E}\subseteq \mathcal{N} \times \mathcal{N}$ represent communication links. The graph $\mathcal{G}$ is undirected, i.e., $(i, j) \in \mathcal{E}$ iff $(j, i)  \in \mathcal{E}$, and connected (there exists a path of links between any two agents).
	
	$W = [w_{ij}] \in \mathbb{R}^{n\times n}$ denotes the agents' coupling matrix, where
	agents $i$ and $j$ are connected iff $w_{ij} = w_{ji} > 0$ ($w_{ij} = w_{ji} = 0$ otherwise).
	$W$ is a nonnegative matrix and doubly stochastic, i.e., $W \mathbf{1} = \mathbf{1}$ and $\mathbf{1}^T W = \mathbf{1}^T$. All diagonal elements $w_{ii}$ are strictly positive.
\end{assumption} 

\begin{assumption}\label{objective_fct}(on the objective function) 
	We assume the existence and the continuity of both $\nabla F_i(x)$ and $\nabla^2 F_i(x)$. Let $x^*\in\mathbb{R}^{d}$ denote the solution of the problem (\ref{objective}), then
	$\nabla F_i(x^*)=0$ and $\det(\nabla^2 F_i(x^*))>0$, $\forall i \in\mathcal{N}$. To insure the existence of $x^*$, we let the objective function be strictly convex, i.e.,
	\begin{equation}\label{cvx}
		\langle x-x^*,\nabla\mathcal{F}(x)\rangle\geq 0, \forall x\in\mathbb{R}^{d}.
	\end{equation}
	We further assume the boundedness of the local Hessian where there exists a constant $\alpha_1\in\mathbb{R}^+$ such that
	\begin{equation*}
		\|\nabla^2 F_i(x)\|_2\leq\alpha_1, \;\forall i\in\mathcal{N},
	\end{equation*}
	where here it suffices to use the Euclidean norm for matrices (keeping in mind for a matrix $A$, $\|A\|_2 \leq \|A\|_F$).
\end{assumption}
\begin{assumption}\label{local_fcts}
	(on the local functions) All local functions $x\longmapsto f_i(x,S)$ are Lipschitz continuous with Lipschitz constant $L_S$,
	\begin{equation*}
		\|f_i(x,S)-f_i(x',S)\|\leq L_S\|x-x'\|, \;\forall i\in\mathcal{N}.
	\end{equation*}
	In addition, we assume $\mathbb{E}_S f_i (x, S) < \infty$, $\forall i \in\mathcal{N}$, to guarantee the boundedness of the objective $\mathcal{F}(x)$.
\end{assumption}
\begin{assumption}\label{noise}
	(on the additive noise) $\zeta_{i,k}$ is a zero-mean uncorrelated noise with bounded variance, where $E(\zeta_{i,k}) = 0$, $E(\zeta_{i,k}^2)=\alpha_4<\infty$, $\forall i\in\mathcal{N}$, and $E(\zeta_{i,k}\zeta_{j,k}) = 0$ if $i\neq j$.
\end{assumption}

\begin{lemma}\label{rho_w} \citep{gr-tr1} Let $\rho_w$ be the spectral norm $W-\frac{1}{n}\mathbf{1}\mathbf{1}^T$.
	When Assumption \ref{network} is satisfied, we have the following inequality
	$$\|W\omega-\mathbf{1}\bar{\omega}\| \leq\rho_w\|\omega-\mathbf{1}\bar{\omega}\|,\;\forall\omega \in \mathbb{R}^{n\times d} \text{ and } \bar{\omega} = \frac{1}{n}\mathbf{1}^T \omega,$$ and $\rho_w < 1$.
\end{lemma}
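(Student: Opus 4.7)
My plan has two parts: first derive the norm inequality as an algebraic identity plus a spectral norm bound, then justify the strict inequality $\rho_w<1$ from the structural assumptions on $W$.

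For the first part I would observe that $\mathbf{1}\bar{\omega}=\frac{1}{n}\mathbf{1}\mathbf{1}^{T}\omega$, so that $W\omega-\mathbf{1}\bar{\omega}=(W-\frac{1}{n}\mathbf{1}\mathbf{1}^{T})\omega$. The key trick is that $(W-\frac{1}{n}\mathbf{1}\mathbf{1}^{T})$ annihilates anything of the form $\mathbf{1}v^{T}$: using $W\mathbf{1}=\mathbf{1}$ and $\mathbf{1}^{T}\mathbf{1}=n$ we get $(W-\frac{1}{n}\mathbf{1}\mathbf{1}^{T})\mathbf{1}\bar{\omega}=\mathbf{1}\bar{\omega}-\mathbf{1}\bar{\omega}=0$. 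Consequently
\[
W\omega-\mathbf{1}\bar{\omega}=\Bigl(W-\tfrac{1}{n}\mathbf{1}\mathbf{1}^{T}\Bigr)(\omega-\mathbf{1}\bar{\omega}).
\]
Applying this column by column and using that for any matrix $M\in\mathbb{R}^{n\times n}$ and any $\omega\in\mathbb{R}^{n\times d}$ with columns $\omega^{(j)}$ one has $\|M\omega\|_{F}^{2}=\sum_{j}\|M\omega^{(j)}\|_{2}^{2}\leq\|M\|_{2}^{2}\|\omega\|_{F}^{2}$, the claimed inequality follows from the definition of $\rho_{w}$ as the spectral norm of $W-\frac{1}{n}\mathbf{1}\mathbf{1}^{T}$.

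For the strict bound $\rho_{w}<1$, I would exploit that Assumption~\ref{network} makes $W$ symmetric, nonnegative, doubly stochastic, with strictly positive diagonal, on a connected graph. Symmetry gives a real spectrum and an orthonormal eigenbasis; double stochasticity yields the eigenvalue $1$ with eigenvector $\frac{1}{\sqrt{n}}\mathbf{1}$. Since $W-\frac{1}{n}\mathbf{1}\mathbf{1}^{T}$ shares the eigenvectors of $W$ and only shifts the eigenvalue associated with $\mathbf{1}$ from $1$ to $0$, its eigenvalues are $\{0,\lambda_{2},\dots,\lambda_{n}\}$ where $\lambda_{i}$ are the remaining eigenvalues of $W$. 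Connectedness together with $w_{ii}>0$ makes $W$ primitive (irreducible and aperiodic), so Perron--Frobenius gives $|\lambda_{i}|<1$ for $i\geq 2$. Hence $\rho_{w}=\max_{i\geq 2}|\lambda_{i}|<1$.

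The main obstacle, in a self-contained write-up, is really the strict inequality: the algebraic manipulation is essentially a one-liner, while $\rho_{w}<1$ requires invoking Perron--Frobenius for primitive stochastic matrices and being careful that the negative-eigenvalue side is also bounded away from $-1$ (this is where the positivity of the self-loops $w_{ii}>0$, ensuring aperiodicity, matters). Since the paper cites \citet{gr-tr1}, I would likely just sketch the spectral argument and reference the standard Perron--Frobenius conclusion rather than reproving it.
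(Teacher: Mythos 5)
Your proof is correct. Note that the paper itself does not prove this lemma at all: it is stated with a citation to \citet{gr-tr1}, so there is no in-paper argument to compare against. Your two-step argument is the standard one behind the cited result: the identity $W\omega-\mathbf{1}\bar{\omega}=(W-\tfrac{1}{n}\mathbf{1}\mathbf{1}^{T})(\omega-\mathbf{1}\bar{\omega})$ (using $W\mathbf{1}=\mathbf{1}$ and $\mathbf{1}^{T}\mathbf{1}=n$) combined with $\|MA\|_{F}\leq\|M\|_{2}\|A\|_{F}$ gives the inequality, and the spectral reasoning (symmetry of $W$ from $w_{ij}=w_{ji}$, simplicity of the eigenvalue $1$ from connectedness, and strict modulus bounds on the other eigenvalues from primitivity via $w_{ii}>0$) gives $\rho_w<1$. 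One small remark: since $W$ is symmetric, you could avoid invoking Perron--Frobenius for the lower spectral edge by a Gershgorin argument ($|\lambda-w_{ii}|\leq 1-w_{ii}$ forces $\lambda>-1$ when $w_{ii}>0$), but your route is equally valid.
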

\begin{lemma}\label{lipschitz}
	Define $h(\mathbf{x}):=\frac{1}{n}\mathbf{1}^T \nabla F(\mathbf{x}) \in\mathbb{R}^{1\times d}$. Due to the boundedness of the second derivative in Assumption \ref{objective_fct}, the objective function is thus $L$-smooth and we have
	\begin{equation*}
		\|\nabla\mathcal{F}(\bar{x}_k)-h(\mathbf{x}_k)\|\leq \frac{L}{\sqrt{n}}\|\mathbf{x}_k-\mathbf{1}\bar{x}_k\|.
	\end{equation*}
\end{lemma}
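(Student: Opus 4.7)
The plan is to unfold the definitions so that the difference on the left collapses into an average of per-agent gradient differences, then apply $L$-smoothness of each $F_i$ componentwise and finish with Cauchy--Schwarz to convert the sum of norms into a Frobenius norm and absorb the right factor of $\sqrt{n}$.

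First, I would note that the Hessian bound $\|\nabla^2 F_i(x)\|_2\leq \alpha_1$ from Assumption \ref{objective_fct}, combined with the mean value theorem applied along the segment between two points, yields the $L$-smoothness estimate
\[
\|\nabla F_i(x)-\nabla F_i(y)\|\leq L\|x-y\|,\qquad \forall i\in\mathcal{N},\;\forall x,y\in\mathbb{R}^d,
\]
with $L=\alpha_1$. Since $\mathcal{F}=\frac{1}{n}\sum_i F_i$ is a convex combination of $L$-smooth functions, $\mathcal{F}$ is itself $L$-smooth, which justifies the phrase ``the objective function is thus $L$-smooth'' in the statement.

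Next, by the definitions of $\mathcal{F}$, $h$, and $\nabla F(\mathbf{x})$, I would rewrite
\[
\nabla\mathcal{F}(\bar{x}_k)-h(\mathbf{x}_k)=\frac{1}{n}\sum_{i=1}^{n}\bigl(\nabla F_i(\bar{x}_k)-\nabla F_i(x_{i,k})\bigr),
\]
treating $\bar{x}_k\in\mathbb{R}^{1\times d}$ as the corresponding vector in $\mathbb{R}^d$. Applying the triangle inequality followed by the per-agent $L$-smoothness bound gives
\[
\bigl\|\nabla\mathcal{F}(\bar{x}_k)-h(\mathbf{x}_k)\bigr\|\leq \frac{1}{n}\sum_{i=1}^{n}\bigl\|\nabla F_i(\bar{x}_k)-\nabla F_i(x_{i,k})\bigr\|\leq \frac{L}{n}\sum_{i=1}^{n}\|x_{i,k}-\bar{x}_k\|.
\]

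Finally, I would apply Cauchy--Schwarz in the form $\sum_{i=1}^n a_i\leq \sqrt{n}\sqrt{\sum_{i=1}^n a_i^2}$ with $a_i=\|x_{i,k}-\bar{x}_k\|$, and recognize that $\sum_{i=1}^n\|x_{i,k}-\bar{x}_k\|^2=\|\mathbf{x}_k-\mathbf{1}\bar{x}_k\|^2$ by the definition of the Frobenius norm. Combining yields
\[
\bigl\|\nabla\mathcal{F}(\bar{x}_k)-h(\mathbf{x}_k)\bigr\|\leq \frac{L}{n}\sqrt{n}\,\|\mathbf{x}_k-\mathbf{1}\bar{x}_k\|=\frac{L}{\sqrt{n}}\|\mathbf{x}_k-\mathbf{1}\bar{x}_k\|,
\]
which is the claimed bound. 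There is no real obstacle here; the only subtlety worth flagging is the bookkeeping between the row-vector convention for $\bar{x}_k$ and the column-vector convention used when writing gradients, and the use of Cauchy--Schwarz (rather than a naive bound by $n$) to obtain the factor $1/\sqrt{n}$ rather than a weaker $1$.
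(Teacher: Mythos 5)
Your proof is correct: the decomposition $\nabla\mathcal{F}(\bar{x}_k)-h(\mathbf{x}_k)=\frac{1}{n}\sum_{i}(\nabla F_i(\bar{x}_k)-\nabla F_i(x_{i,k}))$, the per-agent smoothness with $L=\alpha_1$ drawn from the Hessian bound, and the Cauchy--Schwarz step giving the $1/\sqrt{n}$ factor are exactly the standard argument this lemma rests on. The paper states the lemma without providing a proof, so your write-up simply supplies the argument the paper leaves implicit, and it matches it in spirit.
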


\section{Distributed Stochastic Gradient-Tracking Method} \label{algorithm}
We propose to employ a zero-order one-point estimate of the gradient subject to the stochastic process $S$ and an additive noise $\zeta$ while a stochastic perturbation and a step size are introduced, and we assume that each agent can perform this estimation at each iteration. To elaborate, let $g_{i,k}$ denote the aforementioned gradient estimate for agent $i$ at time $k$, then we define it as
\begin{equation}\label{grdt_estimate}
	\begin{split}
		g_{i,k}&=\Phi_{i,k}\tilde{f}_i(x_{i,k}+\gamma_{k}\Phi_{i,k}, S_{i,k})\\
		&=\Phi_{i,k}(f_i(x_{i,k}+\gamma_{k}\Phi_{i,k}, S_{i,k})+\zeta_{i,k}),
	\end{split}
\end{equation}
where $\gamma_{k}>0$ is a vanishing step size and $\Phi_{i,k} \in \mathbb{R}^d$ is a perturbation randomly and independently generated by each agent $i$.
$g_{i,k}$ is in fact a biased estimation of the gradient $\nabla F_i(x_{i,k})$ and
the algorithm can converge under the condition that all parameters are properly chosen. For clarification on the form of this bias and more on the properties of this estimate, refer to \ref{gradient}. 

\subsection{The 1P-DSGT Algorithm}
The following distributed stochastic gradient-tracking method is considered in this part making use of the gradient estimate presented in (\ref{grdt_estimate}). 

Every agent $i$ initializes its variables with an arbitrary value $x_{i,0}$ and $y_{i,0} = g_{i,0}$. Then, at each time $k\in\mathbb{N}$, agent $i$ updates its variables independently according to the following 3 steps:
\begin{equation}\label{algo}
	\begin{split}
		&x_{i,k+1}=\sum_{j=1}^{n}w_{ij}(x_{j,k}-\alpha_k y_{j,k})\\
		&\text{perform the action: } x_{i,k+1}+\gamma_{k+1}\Phi_{i,k+1}\\
		&y_{i,k+1}=\sum_{j=1}^{n}w_{ij}y_{j,k}+g_{i,k+1}-g_{i,k}
	\end{split}
\end{equation}

where $\alpha_k>0$ is a vanishing step size. Algorithm (\ref{algo}) can then be written in the following compact matrix form for clarity of analysis:
\begin{equation}\label{compact}
	\begin{split}
		&\mathbf{x}_{k+1}=W(\mathbf{x}_k-\alpha_k \mathbf{y}_k)\\
		&\text{perform the action: } \mathbf{x}_{k+1} + \gamma_{k+1}\mathbf{\Phi}_{k+1}\\
		&\mathbf{y}_{k+1}=W\mathbf{y}_k+\mathbf{g}_{k+1}-\mathbf{g}_k
	\end{split}
\end{equation}
where $\mathbf{\Phi}_{k} \in \mathbb{R}^{n\times d}$ is defined as $\mathbf{\Phi}_{k}=[\Phi_{1,k},\Phi_{2,k},\ldots,\Phi_{n,k}]^T$.

As is evident from the update of the variables, the exchange between agents is limited to neighboring nodes, and it encompasses the decision variable $\mathbf{x}_{k+1}$ and the auxiliary variable $\mathbf{y}_{k+1}$.

By construction of Algorithm (\ref{compact}), we note that the mean of the auxiliary variable $\mathbf{y}_{k}$ is equal to that of the gradient estimate $\mathbf{g}_{k}$ at every iteration $k$ since $\mathbf{y}_{0}=\mathbf{g}_{0}$, and by recursion, we obtain $\bar{y}_k = \frac{1}{n}\mathbf{1}^T \mathbf{g}_k = \bar{g}_k$.

\begin{assumption}\label{step_sizes}
	(on the step-sizes)	Both $\alpha_k$ and $\gamma_k$ vanish to $0$ as $k\rightarrow\infty$, and satisty the the following sums
	\begin{equation*}
		\sum_{k=1}^{\infty} \alpha_k \gamma_k =\infty,\;\text{and}\; \sum_{k=1}^{\infty} \alpha_k^2<\infty.
	\end{equation*}
\end{assumption}

\begin{assumption}\label{perturbation}
	(on the random perturbation)
	Let $\Phi_{i,k} = (\phi_{i,k}^1, \phi_{i,k}^2, \ldots, \phi_{i,k}^d)^T$.
	
	Each agent $i$ chooses its $\Phi_{i,k}$ vector independently from other agents $j\neq i$. In addition, the elements of $\Phi_{i,k}$ are assumed i.i.d with $\mathbb{E}(\phi_{i,k}^{d_1} \phi_{i,k}^{d_2}) =0$ for $d_1 \neq d_2$ and there exists $\alpha_2 >0$ such that
	$\mathbb{E} (\phi_{i,k}^{d_j})^2 = \alpha_2$, $\forall {d_j}$, $\forall i$.
	We further assume that there exists a constant $\alpha_3 >0$ where
	$\|\Phi_{i,k}\|\leq \alpha_3$, $\forall i.$
\end{assumption}
\begin{example}\label{eg}
	One example is to take $\alpha_k = \alpha_0 (k + 1)^{-\upsilon_1}$ and $\gamma_k = \gamma_0 (k + 1)^{-\upsilon_2}$ with the constants $\alpha_0$, $\gamma_0$, $\upsilon_1$, $\upsilon_2$ $\in \mathbb{R}^+$. As $\sum_{k=1}^{\infty}\alpha_k\gamma_k$ diverges for $\upsilon_1+\upsilon_2\leq 1$ and $\sum_{k=1}^{\infty} \alpha_k^2$ converges for $\upsilon_1 > 0.5$, we can find pairs of $\upsilon_1$ and $\upsilon_2$ so that Assumption \ref{step_sizes} is satisfied.
	
	To achieve the conditions in Assumption \ref{perturbation}, we can choose the probability distribution of $\phi_{i,k}^{d_j}$ to be the symmetrical Bernoulli distribution where $\phi_{i,k}^{d_j} \in \{-\frac{1}{\sqrt{d}},\frac{1}{\sqrt{d}}\}$ with $\mathbb{P}(\phi_{i,k}^{d_j}=-\frac{1}{\sqrt{d}})=\mathbb{P}(\phi_{i,k}^{d_j}=\frac{1}{\sqrt{d}})=0.5$, $\forall d_j$, $\forall i$.
\end{example}

\subsection{Convergence Results}
In this part, we analyze the asymptotic behavior of Algorithm (\ref{compact}). We start the analysis by defining $\mathcal{H}_k$ as the history sequence $\{x_0, y_0, S_0, \ldots, x_{k-1}, y_{k-1}, S_{k-1}, x_k\}$ and denoting by $\mathbb{E}[.|\mathcal{H}_k]$ as the conditional expectation given $\mathcal{H}_k$.

We define $\tilde{g}_{k}$ to be the expected value of $\bar{g}_{k}$ with respect to
all the stochastic terms $S,\Phi,\zeta$ given $\mathcal{H}_k$, i.e.,
\begin{equation*}
	\tilde{g}_{k}=\mathbb{E}_{S,\Phi,\zeta} [\bar{g}_{k}|\mathcal{H}_k]
\end{equation*}
In what follows, we use $\tilde{g}_{k}=\mathbb{E}[\bar{g}_{k}|\mathcal{H}_k]$ for shorthand notation.

We define the error $e_k$ to be the difference between the value of a single realization of $\bar{g}_k$ and its conditional expectation $\tilde{g}_{k}$, i.e.,
\begin{equation*}
	e_k = \bar{g}_k-\tilde{g}_{k},
\end{equation*} 
where $e_k$ can be seen as a stochastic noise. The following lemma describing the vanishing of the stochastic noise is essential for our main result. 

\begin{lemma}\label{martingale}
	If all Assumptions $2-6$ hold and $\|\bar{x}_k\|<\infty$ almost surely, then for any constant $\nu>0$, we have
	\begin{equation*}
		\lim_{K\rightarrow\infty} \mathbb{P}(\sup_{K'\geq K}\|\sum_{k=K}^{K'}\alpha_k e_k\|\geq\nu)=0, \;\forall\nu>0.
	\end{equation*}
	Proof: See \ref{martingale_proof}.
\end{lemma}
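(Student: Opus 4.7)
The plan is to identify $\{e_k\}$ as a martingale difference sequence with respect to the filtration $\{\mathcal{H}_k\}$, establish a uniform bound on its conditional second moment, and then apply Doob's $L^2$ maximal inequality together with the summability $\sum_k \alpha_k^2 < \infty$ from Assumption \ref{step_sizes} to conclude.

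\textbf{Step 1 (martingale structure).} By construction $\tilde{g}_k = \mathbb{E}[\bar{g}_k \mid \mathcal{H}_k]$, so $\mathbb{E}[e_k \mid \mathcal{H}_k] = 0$. Since $\alpha_k$ is deterministic (in particular $\mathcal{H}_k$-measurable), the partial sums $M_K := \sum_{k=1}^{K} \alpha_k e_k$ form a vector-valued martingale whose increments are orthogonal in $L^2$.

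\textbf{Step 2 (second-moment bound).} I would next show that $\mathbb{E}[\|e_k\|^2 \mid \mathcal{H}_k]$ is bounded by a deterministic constant $C$. Starting from
\begin{equation*}
g_{i,k} = \Phi_{i,k}\bigl(f_i(x_{i,k}+\gamma_k\Phi_{i,k},S_{i,k})+\zeta_{i,k}\bigr),
\end{equation*}
the bound $\|\Phi_{i,k}\|\leq\alpha_3$ from Assumption \ref{perturbation}, the Lipschitz estimate $|f_i(x,S)| \leq |f_i(0,S)| + L_S\|x\|$ from Assumption \ref{local_fcts}, and the noise variance bound from Assumption \ref{noise} together yield $\mathbb{E}[\|g_{i,k}\|^2 \mid \mathcal{H}_k] \leq C_1(1+\|x_{i,k}\|^2)$. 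The hypothesis $\|\bar{x}_k\| < \infty$ almost surely, combined with a consensus estimate on $\|x_{i,k}-\bar{x}_k\|$ obtained from the contraction in Lemma \ref{rho_w} applied to the tracking recursion, then upgrades this to a uniform almost-sure bound, giving $\mathbb{E}\|e_k\|^2 \leq \mathbb{E}\|\bar{g}_k\|^2 \leq C$.

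\textbf{Step 3 (maximal inequality and conclusion).} Applying Doob's $L^2$ maximal inequality to the tail martingale $\{M_{K'} - M_{K-1}\}_{K'\geq K}$ gives
\begin{equation*}
\mathbb{P}\Bigl(\sup_{K'\geq K}\Bigl\|\sum_{k=K}^{K'}\alpha_k e_k\Bigr\|\geq \nu\Bigr) \;\leq\; \frac{1}{\nu^2}\sum_{k=K}^{\infty}\alpha_k^2\,\mathbb{E}\|e_k\|^2 \;\leq\; \frac{C}{\nu^2}\sum_{k=K}^{\infty}\alpha_k^2,
\end{equation*}
and since $\sum_k\alpha_k^2 < \infty$ by Assumption \ref{step_sizes}, its tail vanishes as $K\to\infty$, yielding the stated convergence in probability.

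The principal obstacle is the second-moment bound in Step 2. One-point zero-order estimators are intrinsically high variance, so the analysis must carefully combine the bounded perturbation, the Lipschitz growth of $f_i(\cdot,S)$, and the bounded noise variance, and (more delicately) propagate the a.s. boundedness of $\bar{x}_k$ to the individual iterates $x_{i,k}$ via the consensus contraction of Lemma \ref{rho_w}. Once this uniform bound is in hand, the rest of the argument is a standard martingale maximal-inequality computation.
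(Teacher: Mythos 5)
Your proposal is correct and follows essentially the same route as the paper's proof in \ref{martingale_proof}: identify $\sum_{k\geq K}\alpha_k e_k$ as a martingale, apply Doob's maximal inequality, bound $\mathbb{E}\|e_k\|^2$ by the second moment of $\bar{g}_k$ (the paper's Lemma \ref{grdt_bnd}), and invoke $\sum_k\alpha_k^2<\infty$ from Assumption \ref{step_sizes}. The only minor difference is that the paper simply assumes $\|\mathbf{x}_k\|<\infty$ almost surely in Lemma \ref{grdt_bnd} rather than deriving boundedness of the local iterates from $\|\bar{x}_k\|<\infty$ via the consensus contraction, as you sketch in Step 2.
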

For any integer $k\geq 0$, we define the divergence, or the error between the average action taken by the agents $\bar{x}_{k}$ and the optimal solution $x^*$ as
\begin{equation}\label{divergence}
	d_{k}=\|\bar{x}_{k}-x^*\|^2.
\end{equation}
The following theorem describes the main convergence result.
\begin{theorem}\label{cv_th}
	If all Assumptions $1-6$ hold and $\|\mathbf{x}_k\| <\infty$ almost surely, then as $k\rightarrow\infty$, $d_k\rightarrow 0$, $\bar{x}_k\rightarrow x^*$, and $x_{i,k}\rightarrow\bar{x}_k$ for all $i\in\mathcal{N}$ almost surely by applying the Algorithm.
	
	Proof: See \ref{convergence}.
\end{theorem}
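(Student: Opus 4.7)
The plan is to follow the classical stochastic--approximation recipe, using the gradient--tracking structure to decouple consensus from optimization and then running a Robbins--Siegmund supermartingale argument on the Lyapunov function $d_k$, with Lemma \ref{martingale} absorbing the stochastic noise. The first step is to derive the mean dynamics. Since $W$ is doubly stochastic and $\bar y_k=\bar g_k$ by construction, projecting (\ref{compact}) onto $\tfrac{1}{n}\mathbf{1}^T$ gives $\bar x_{k+1}=\bar x_k-\alpha_k\bar g_k$. A Taylor expansion of $f_i(x_{i,k}+\gamma_k\Phi_{i,k},S_{i,k})$ around $x_{i,k}$, together with Assumptions \ref{objective_fct}, \ref{local_fcts}, \ref{noise}, and \ref{perturbation} (in particular $\mathbb{E}\Phi=0$, $\mathbb{E}\Phi\Phi^T=\alpha_2 I$, $\|\Phi\|\le\alpha_3$), yields
\[
\tilde g_k=\alpha_2\gamma_k\,h(\mathbf{x}_k)+b_k,
\]
with a deterministic bias $b_k$ of order $\gamma_k^2$, controlled by the Hessian bound $\alpha_1$ and $\alpha_3^3$. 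Writing $\bar g_k=\tilde g_k+e_k$, the averaged recursion becomes a biased SGD-type iteration with effective step size $\alpha_k\gamma_k$; Assumption \ref{step_sizes} guarantees $\sum\alpha_k\gamma_k=\infty$ while $\sum\alpha_k^2<\infty$.

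Second, I would establish consensus. Subtracting $\mathbf{1}\bar x_{k+1}$ from (\ref{compact}) and applying Lemma \ref{rho_w} gives
\[
\|\mathbf{x}_{k+1}-\mathbf{1}\bar x_{k+1}\|\le \rho_w\|\mathbf{x}_k-\mathbf{1}\bar x_k\|+\alpha_k\rho_w\|\mathbf{y}_k-\mathbf{1}\bar y_k\|,
\]
with an analogous coupled contraction for $\|\mathbf{y}_k-\mathbf{1}\bar y_k\|$ driven by $\|\mathbf{g}_{k+1}-\mathbf{g}_k\|$. Because the estimators $g_{i,k}$ are bounded in second moment through $\|\Phi_{i,k}\|\le\alpha_3$, the Lipschitz property of $f_i$, and $\mathbb{E}\zeta_{i,k}^2=\alpha_4$, and because $\alpha_k\to 0$, this coupled linear system contracts to zero almost surely, proving $x_{i,k}-\bar x_k\to 0$ and producing quantitative bounds on $\|\mathbf{x}_k-\mathbf{1}\bar x_k\|$ of order $\alpha_k$ to feed into the optimality analysis.

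For the main step, I would expand
\[
d_{k+1}\le d_k-2\alpha_k\alpha_2\gamma_k\,\langle\bar x_k-x^*,h(\mathbf{x}_k)\rangle+C\alpha_k^2\|\bar g_k\|^2-2\alpha_k\langle\bar x_k-x^*,b_k+e_k\rangle,
\]
then replace $h(\mathbf{x}_k)$ by $\nabla\mathcal{F}(\bar x_k)$ via Lemma \ref{lipschitz} and absorb the discrepancy using Young's inequality into the consensus term from the previous paragraph. Strict convexity (\ref{cvx}) makes the principal inner product nonnegative, the bias contributes $\alpha_k\gamma_k^2$ pieces that are summable under Assumption \ref{step_sizes} for $\upsilon_1,\upsilon_2$ as in Example \ref{eg}, and Lemma \ref{martingale} controls the $\alpha_k e_k$ accumulation as a convergent perturbation rather than a martingale difference with bounded variance. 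Robbins--Siegmund then delivers, almost surely, both convergence of $d_k$ to a finite random limit and $\sum_k\alpha_k\gamma_k\langle\bar x_k-x^*,\nabla\mathcal{F}(\bar x_k)\rangle<\infty$. Since $\sum\alpha_k\gamma_k=\infty$ and the inner product is nonnegative, a subsequence along which it vanishes must exist, and strict convexity forces any such subsequential limit to be $x^*$; the existence of the Lyapunov limit then upgrades this to $d_k\to 0$ and $\bar x_k\to x^*$ a.s., which combined with the consensus step gives the full conclusion.

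The main obstacle is juggling three error sources of very different natures simultaneously: the deterministic bias $b_k=O(\gamma_k^2)$ intrinsic to one-point estimation, the martingale-type noise $e_k$ controllable only through the non-standard Lemma \ref{martingale} (rather than a bounded-variance martingale-difference hypothesis, which the one-point estimator does not satisfy), and the consensus gap $\|\mathbf{x}_k-\mathbf{1}\bar x_k\|$ coupled to the iterates via $\mathbf{y}_k$. Each depends on $\alpha_k$ and $\gamma_k$ in a different way, and keeping the leading descent term dominant while routing everything else into Robbins--Siegmund's summable slots, without invoking any bound beyond the assumed $\|\mathbf{x}_k\|<\infty$ almost surely, is the delicate book-keeping at the heart of the argument.
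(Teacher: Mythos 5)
Your overall architecture matches the paper's (mean dynamics $\bar x_{k+1}=\bar x_k-\alpha_k\bar g_k$, a consensus lemma from the contraction of Lemma \ref{rho_w} plus boundedness of $\|\mathbf{y}_k-\mathbf{1}\bar y_k\|$, expansion of $d_{k+1}$, noise handled through Lemma \ref{martingale}, and divergence of $\sum\alpha_k\gamma_k$ to force the descent), and replacing the paper's direct summation-and-contradiction argument by Robbins--Siegmund would be an acceptable variation. However, there are two genuine gaps in how you route the error terms. First, you split the one-point bias off as a separate perturbation of size $O(\alpha_k\gamma_k^2)$ (times $\|\bar x_k-x^*\|$, or $(1+d_k)$ after Young) and claim it is summable; Assumption \ref{step_sizes} only gives $\sum\alpha_k\gamma_k=\infty$ and $\sum\alpha_k^2<\infty$, which does \emph{not} imply $\sum\alpha_k\gamma_k^2<\infty$ (e.g.\ $\alpha_k\sim k^{-0.6}$, $\gamma_k\sim k^{-0.2}$ satisfies the assumption but $\sum\alpha_k\gamma_k^2$ diverges), and the theorem is stated for general step sizes, not only the Example \ref{eg} choice you invoke. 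The paper avoids this by never separating the bias: it keeps $\nabla\mathcal{F}(\bar x_k)+\bar b_k$ inside the descent inner product and uses $\|\bar b_k\|\le\gamma_k\alpha_1\alpha_3^3/(2\alpha_2)\to0$ to argue that for all large $k$ the term $-\langle\bar x_k-x^*,\nabla\mathcal{F}(\bar x_k)+\bar b_k\rangle$ is nonpositive, so no summability of the bias is needed.

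Second, your use of Young's inequality to absorb the discrepancy $2\alpha_2\alpha_k\gamma_k\langle\bar x_k-x^*,\nabla\mathcal{F}(\bar x_k)-h(\mathbf{x}_k)\rangle$ produces a term of the form $\epsilon\,\alpha_k\gamma_k\,d_k$ plus $\alpha_k\gamma_k\|\mathbf{x}_k-\mathbf{1}\bar x_k\|^2$. The coefficient $\alpha_k\gamma_k$ is \emph{not} summable (by assumption its sum diverges), so this term fits neither the $(1+a_k)$ slot of Robbins--Siegmund nor can it be absorbed by the descent term, because Theorem \ref{cv_th} assumes only strict convexity (\ref{cvx}); strong convexity enters only in the rate analysis (Lemma \ref{avg_div}), where exactly this Young splitting is legitimately used. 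The paper instead proves in Lemma \ref{xx_bar_sum} the almost sure summability $\sum_k\gamma_k\alpha_k\|\mathbf{x}_k-\mathbf{1}\bar x_k\|<\infty$ and pairs it with the a.s.\ boundedness of $\|\bar x_k-x^*\|$ to make the cross term a convergent series directly, without any quadratic splitting. To repair your argument you would need either this stronger consensus summability (your "order $\alpha_k$" bound plus $\sum\alpha_k^2<\infty$ essentially gives it, so state and use it in product form rather than via Young), or strong convexity, which you are not entitled to here.
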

\subsection{Convergence Rate}
This part deals with how fast the expected divergence vanishes to find the proposed algorithm's expected convergence rate. To do so, we define the expected divergence to be $D_k = \mathbb{E}[\|\bar{x}_{k}-x^*\|^2]$. The goal is to bound this divergence from above by sequences whose convergence rate is known. The analysis is highly associated with the parameters $\alpha_k$ and $\gamma_k$ that play a significant role in determining this upper bound. Hence, in what follows, the analysis starts with a general form of $\alpha_k$ and $\gamma_k$, then a particular case is considered.

\subsubsection{General Form of $\alpha_k$ and $\gamma_k$} 
We start by considering an additional assumption on the objective function for what follows.
\begin{assumption}\label{strong_convexity}
	Let $\mathcal{F}(x)$ be strongly convex, then there exists $\lambda > 0$ such that
	\begin{equation*}
		\langle\nabla\mathcal{F}(x), x-x^*\rangle\geq \lambda \|x-x^*\|^2.
	\end{equation*}
\end{assumption}
Our main result regarding the convergence rate is summarized in the following theorem.
\begin{theorem}\label{cv_rate_th} 
	
	Let $R=\| \mathbf{x}_0 - \mathbf{1}\bar{x}_0\|^2$, $\delta_k=(\frac{1+\rho_w^2}{2})^k$, and $\beta_k=\sum_{j=1}^{k} \delta_j\alpha_{k-j}^2$. Let $\bar{M}$ denote the upper bound of $\mathbb{E}[\|\bar{g}_k\|^2]$ and $G$ that of $\|\mathbf{y}_{k}-\mathbf{1}\bar{y}_{k}\|$ and define $\bar{G}=\frac{2\rho_w^2}{1-\rho_w^2} G$ (Refer to \ref{gradient}, \ref{to_use}, and \ref{xx_bar_sum_proof} for proof of boundedness).
	
	Next, we define the constants $A=\lambda\alpha_2$, $B=\alpha_1\alpha_3^3$, $C=\alpha_2\frac{L^2}{\lambda n}$, and 
	\begin{equation*}
		K_0 = \arg\min_{A\alpha_k\gamma_k<1} k.
	\end{equation*}
	We finally define the following parameters:
	\begin{equation}\label{parameters}
		\begin{matrix}
			\kappa_k = \frac{1-(\frac{\gamma_{k+1}}{\gamma_{k}})^2}{\alpha_k\gamma_k}, & \sigma_1 = \underset{k\geq K_0}{\max}\;\kappa_k,&
			\sigma_2 = \underset{k\geq K_0}{\max}\;\frac{\delta_k}{\gamma_k^{2}}, 
			& \sigma_3 = \underset{k\geq K_0}{\max}\; \frac{\beta_k}{\gamma_k^{2}},
			& \sigma_4 = \underset{k\geq K_0}{\max}\; \frac{\alpha_k}{\gamma_k^{3}},\\
			\tau_k = \frac{1-\frac{\alpha_{k+1}\gamma_{k+1}^{-1}}{\alpha_{k}\gamma_{k}^{-1}}}{\alpha_{k}\gamma_k},
			& \sigma_5 = \underset{k\geq K_0}{\max}\;\tau_k,
			& \sigma_6 = \underset{k\geq K_0}{\max}\;\sqrt{\frac{\gamma_k^3}{\alpha_{k}}},
			& \sigma_7 = \underset{k\geq K_0}{\max}\;\frac{\gamma_k}{\alpha_{k}}\delta_k,
			& \sigma_8 = \underset{k\geq K_0}{\max}\;\frac{\gamma_k}{\alpha_{k}}\beta_k.
		\end{matrix}
	\end{equation}
	If $\kappa_k<A$ for any $k\geq K_0$, then
	\begin{equation}\label{rate_1}
		D_k\leq\varsigma_1^2\gamma_k^2,\; \forall k\geq K_0,
	\end{equation}
	with 
	\begin{equation}\label{vartheta}
		\varsigma_1\geq 	\max\Bigg\{\frac{\sqrt{D_{K_0}}}{\gamma_{K_0}}, \frac{B}{2(A-\sigma_1)}+\sqrt{\Big(\frac{B}{2(A-\sigma_1)}\Big)^2+\frac{CR\sigma_2+C\bar{G}\sigma_3 +\bar{M}\sigma_4}{(A-\sigma_1)}}\Bigg\}.
	\end{equation}
	If $\tau_k<A$ for any $k\geq K_0$, then
	\begin{equation}\label{rate_2}
		D_k\leq\varsigma_2^2\frac{\alpha_k}{\gamma_k},\; \forall k\geq K_0,
	\end{equation}
	with 
	\begin{equation}\label{varrho}
		\varsigma_2\geq \max\Bigg\{\sqrt{\frac{D_{K_0}\gamma_{K_0}}{\alpha_{K_0}}},	\frac{B\sigma_6}{2(A-\sigma_5)}+\sqrt{\bigg(\frac{B\sigma_6}{2(A-\sigma_5)}\bigg)^2+\frac{C(R\sigma_7+ \bar{G}\sigma_8)+\bar{M}}{(A-\sigma_5)}}\Bigg\}.
	\end{equation}
	Proof: See \ref{th_rate_proof}.
\end{theorem}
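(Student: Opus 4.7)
The plan is to establish a single recursive inequality for $D_k = \mathbb{E}[\|\bar{x}_k - x^*\|^2]$ and then verify both bounds by induction against the same recursion: one with ansatz $D_k \leq \varsigma_1^2\gamma_k^2$ and the other with $D_k \leq \varsigma_2^2 \alpha_k/\gamma_k$.

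First I would derive the one-step recursion. Since $W$ is doubly stochastic, averaging the algorithm gives $\bar{x}_{k+1} = \bar{x}_k - \alpha_k \bar{g}_k$, so expanding the squared norm and taking expectation conditional on $\mathcal{H}_k$ eliminates the martingale-difference part $e_k = \bar{g}_k - \tilde{g}_k$ from the cross term and leaves $-2\alpha_k\mathbb{E}\langle \tilde{g}_k, \bar{x}_k - x^*\rangle + \alpha_k^2\mathbb{E}\|\bar{g}_k\|^2$. A second-order Taylor expansion of $f_i(x_{i,k} + \gamma_k\Phi_{i,k}, S_{i,k})$, combined with Assumption~\ref{objective_fct} (bounded Hessian) and Assumption~\ref{perturbation} (isotropy $\mathbb{E}[\Phi\Phi^T]=\alpha_2 I$ and $\|\Phi\|\leq\alpha_3$), yields
\begin{equation*}
\tilde{g}_k = \alpha_2\gamma_k\, h(\mathbf{x}_k) + r_k,\qquad \|r_k\| \leq \tfrac{B}{2}\gamma_k^2,\ \ B = \alpha_1\alpha_3^3.
\end{equation*}
Splitting $h(\mathbf{x}_k) = \nabla\mathcal{F}(\bar{x}_k) + (h(\mathbf{x}_k) - \nabla\mathcal{F}(\bar{x}_k))$, Assumption~\ref{strong_convexity} gives the contraction $\langle\nabla\mathcal{F}(\bar{x}_k), \bar{x}_k - x^*\rangle \geq \lambda D_k$; Lemma~\ref{lipschitz} bounds the second piece, and $r_k$ is handled by Cauchy-Schwarz. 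Applying Young's inequality to both cross terms absorbs a factor $A\alpha_k\gamma_k$ into the contraction. Using the consensus bound $\mathbb{E}\|\mathbf{x}_k - \mathbf{1}\bar{x}_k\|^2 \leq R\delta_k + \bar{G}\beta_k$ and the gradient bound $\mathbb{E}\|\bar{g}_k\|^2 \leq \bar{M}$ from the referenced appendices, one obtains, for $k \geq K_0$,
\begin{equation*}
D_{k+1} \leq (1 - A\alpha_k\gamma_k)\,D_k + C\alpha_k\gamma_k(R\delta_k + \bar{G}\beta_k) + B\alpha_k\gamma_k^2\sqrt{D_k} + \bar{M}\alpha_k^2.
\end{equation*}

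For the first bound, I would induct on $D_k \leq \varsigma_1^2\gamma_k^2$; the base case at $k=K_0$ is ensured by $\varsigma_1 \geq \sqrt{D_{K_0}}/\gamma_{K_0}$. Substituting the ansatz, demanding $D_{k+1} \leq \varsigma_1^2\gamma_{k+1}^2$, and dividing through by $\alpha_k\gamma_k^3$, the inductive step becomes
\begin{equation*}
C\Big(R\tfrac{\delta_k}{\gamma_k^2} + \bar{G}\tfrac{\beta_k}{\gamma_k^2}\Big) + B\varsigma_1 + \bar{M}\tfrac{\alpha_k}{\gamma_k^3} \leq \Big(A - \tfrac{\gamma_k^2 - \gamma_{k+1}^2}{\alpha_k\gamma_k^3}\Big)\varsigma_1^2 = (A - \kappa_k)\varsigma_1^2.
\end{equation*}
Bounding every $k$-dependent quantity by the corresponding $\sigma_i$ reduces this to the uniform quadratic inequality $(A-\sigma_1)\varsigma_1^2 - B\varsigma_1 - (CR\sigma_2 + C\bar{G}\sigma_3 + \bar{M}\sigma_4) \geq 0$, whose larger root is precisely the second argument of the max in (\ref{vartheta}); the hypothesis $\kappa_k < A$ makes $A-\sigma_1 > 0$ so the root is real and positive. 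The second bound follows by the parallel induction on $D_k \leq \varsigma_2^2 \alpha_k/\gamma_k$: dividing the analogous inequality by $\alpha_k^2$ produces the discrete derivative $(\alpha_k/\gamma_k - \alpha_{k+1}/\gamma_{k+1})/\alpha_k^2 = \tau_k$, and the $\sigma_5,\ldots,\sigma_8$ absorb the remaining ratios, yielding the quadratic whose larger root is (\ref{varrho}).

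The main obstacle is the bookkeeping: in each of the two inductions, one must divide by the correct monomial in $\alpha_k,\gamma_k$ ($\alpha_k\gamma_k^3$ in the first case, $\alpha_k^2$ in the second) so that the discrete derivative of the ansatz matches $\kappa_k$ or $\tau_k$, and then verify that every remaining coefficient is uniformly controlled on $\{k \geq K_0\}$ by the prescribed $\sigma_i$. Strict positivity of $A - \sigma_1$ (respectively $A - \sigma_5$), guaranteed by the hypotheses $\kappa_k < A$ and $\tau_k < A$, is exactly what makes the resulting quadratic inequality solvable; the choice of $\varsigma_i$ as its larger root then closes the induction. A secondary but self-contained technicality is establishing the consensus bound $\mathbb{E}\|\mathbf{x}_k-\mathbf{1}\bar{x}_k\|^2 \leq R\delta_k + \bar{G}\beta_k$, obtained by squaring the contraction of Lemma~\ref{rho_w}, applying $(a+b)^2 \leq (1+\eta)a^2 + (1+\eta^{-1})b^2$ with $\eta = (1-\rho_w^2)/(2\rho_w^2)$, and unrolling the resulting geometric recursion.
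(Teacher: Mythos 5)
Your proposal is correct and follows essentially the same route as the paper: you derive the same one-step recursion $D_{k+1}\leq(1-A\alpha_k\gamma_k)D_k+B\alpha_k\gamma_k^2\sqrt{D_k}+C\alpha_k\gamma_k[\delta_kR+\beta_k\bar{G}]+\alpha_k^2\bar{M}$ (the paper's Lemma~\ref{avg_div}), and then run the same two inductions with ansatzes $\varsigma_1^2\gamma_k^2$ and $\varsigma_2^2\alpha_k/\gamma_k$, dividing by $\alpha_k\gamma_k^3$ and $\alpha_k^2$ respectively to expose $\kappa_k$ and $\tau_k$ and bounding the remaining ratios by $\sigma_1,\ldots,\sigma_8$ to reach the quadratics whose larger roots give (\ref{vartheta}) and (\ref{varrho}). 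The only cosmetic imprecision is the phrase about applying Young's inequality to ``both cross terms'' (the bias term is handled by Cauchy--Schwarz alone, which is why the $\sqrt{D_k}$ term survives), but your stated recursion is exactly the paper's, so nothing is affected.
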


\subsubsection{A Special Case of $\alpha_k$ and $\gamma_k$} 
We now consider the special case mentioned in Example \ref{eg}:
\begin{equation}\label{step_sizes_eg}
	\alpha_k = \alpha_0 (k + 1)^{-\upsilon_1} \text{ and } \gamma_k = \gamma_0 (k + 1)^{-\upsilon_2},
\end{equation}
where $0.5 < \upsilon_1 < 1$ and $0 < \upsilon_2\leq 1-\upsilon_1$.

Before stating the main result, we consider the following lemma.
\begin{lemma}\label{beta_k}
	(Study of $\beta_k$)
	Let $K_1$ be such that
	\begin{equation}\label{K_1}
		K_1 = \arg \min_{\big(\frac{1+\rho_w^2}{2}\big)^k\leq\alpha_k^2} k.
	\end{equation}
	Then, the convergence rate of $\beta_k$ is at least that of order $\frac{1}{k^{3\upsilon_1-1}}$.
	
	Proof: See \ref{proof_beta_k}
\end{lemma}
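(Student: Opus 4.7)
The plan is to bound $\beta_k=\sum_{j=1}^{k}\delta_j\alpha_{k-j}^2$ by splitting the sum at $j=K_1$, using the defining property of $K_1$ to trade the geometric decay of $\delta_j$ for polynomial decay in $j$, and then controlling the resulting convolution-like sum.

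First, for $k\geq K_1$, I would write
\begin{equation*}
\beta_k \;=\; \underbrace{\sum_{j=1}^{K_1-1}\delta_j\alpha_{k-j}^2}_{T_h} \;+\; \underbrace{\sum_{j=K_1}^{k}\delta_j\alpha_{k-j}^2}_{T_t}.
\end{equation*}
The head $T_h$ contains a fixed, $k$-independent number of terms: each satisfies $\alpha_{k-j}^2=\alpha_0^2(k-j+1)^{-2\upsilon_1}=O(k^{-2\upsilon_1})$ uniformly for $1\leq j\leq K_1-1$, and $\sum_{j\geq 1}\delta_j$ is the convergent geometric series with ratio $\rho:=(1+\rho_w^2)/2<1$. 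Hence $T_h=O(k^{-2\upsilon_1})$.

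For the tail $T_t$, I would invoke the definition of $K_1$, which yields $\delta_j\leq\alpha_j^2=\alpha_0^2(j+1)^{-2\upsilon_1}$ for every $j\geq K_1$, so that
\begin{equation*}
T_t \;\leq\; \alpha_0^4\sum_{j=K_1}^{k}(j+1)^{-2\upsilon_1}(k-j+1)^{-2\upsilon_1}.
\end{equation*}
I would then split this convolution-type sum at the midpoint $j=\lceil k/2\rceil$. On the lower half, $(k-j+1)^{-2\upsilon_1}\leq C\,k^{-2\upsilon_1}$, and the sum $\sum_{j\geq K_1}(j+1)^{-2\upsilon_1}$ converges because the hypothesis $\upsilon_1>1/2$ forces $2\upsilon_1>1$; the upper half is handled by the symmetric bound. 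Each half therefore contributes $O(k^{-2\upsilon_1})$, hence $T_t=O(k^{-2\upsilon_1})$, and combining gives $\beta_k=O(k^{-2\upsilon_1})$. Since $\upsilon_1<1$ (enforced by $\upsilon_2>0$ together with $\upsilon_2\leq 1-\upsilon_1$), we have $2\upsilon_1>3\upsilon_1-1$, which yields the claimed rate $\beta_k=O(k^{-(3\upsilon_1-1)})$.

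The main obstacle is the convolution estimate: the midpoint split must be chosen precisely so that on each half one factor degenerates to an $O(k^{-2\upsilon_1})$ constant while the remaining sum converges to a $k$-independent bound, and this latter convergence crucially relies on the summability hypothesis $2\upsilon_1>1$ imposed in Assumption \ref{step_sizes}. Everything else reduces to elementary comparisons between geometric and polynomial sequences, together with the observation that $K_1$ is a fixed constant so that $(k-j+1)^{-2\upsilon_1}\sim k^{-2\upsilon_1}$ throughout the head.
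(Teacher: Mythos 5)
Your proposal is correct, and it shares the paper's skeleton — split $\beta_k$ at the $k$-independent index $K_1$, bound the head by the geometric sum $\sum_j\delta_j$ times $O(k^{-2\upsilon_1})$, and use the defining property $\delta_j\leq\alpha_j^2$ for $j\geq K_1$ to reduce the tail to the convolution $\sum_{j=K_1}^{k}\alpha_j^2\alpha_{k-j}^2$ — but it diverges at the key technical step. The paper estimates that convolution by a change of variables $u=j/(k+2)$, Jensen's inequality for $x\mapsto x^{\upsilon_1}$, partial fractions, and a Riemann-sum-to-integral comparison, arriving at the rate $k^{-(3\upsilon_1-1)}$. You instead use the standard symmetric midpoint split: on each half one factor is uniformly $O(k^{-2\upsilon_1})$ and the other sums to a finite constant because $2\upsilon_1>1$ (Assumption \ref{step_sizes} in the power-law form). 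This is considerably shorter and in fact yields the sharper bound $\beta_k=O(k^{-2\upsilon_1})$, which implies the lemma's claim since $2\upsilon_1>3\upsilon_1-1$ for $\upsilon_1<1$; it would even slightly relax the finiteness condition on $\sigma_3$ in the proof of Theorem \ref{special_case}, though nothing downstream requires this. One point to make explicit (the paper leaves it implicit as well): the inequality $\delta_j\leq\alpha_j^2$ must hold for \emph{all} $j\geq K_1$, not only at $j=K_1$; this follows because the condition derived from \eqref{K_1} is equivalent to $\upsilon_1\leq\frac12\log\bigl(\tfrac{2}{1+\rho_w^2}\bigr)\tfrac{k}{\log(k+1)}$ and $k/\log(k+1)$ is increasing, so once satisfied it persists.
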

We then let $K_2$ be such that $K_2\geq\max\{K_0,K_1\}$ and state our next theorem.
\begin{theorem}\label{special_case}
	Let $\alpha_k$ and $\gamma_k$ have the forms given in (\ref{step_sizes_eg}), if $\alpha_0\gamma_0\geq \max\{2\upsilon_2,\upsilon_1-\upsilon_2\}/A$, then we can say that there exists $\Upsilon<\infty$, where
	\begin{equation*}
		D_k \leq \Upsilon(k+1)^{-\min\{2\upsilon_2,\upsilon_1-\upsilon_2\}},\;\forall k\geq K_2.
	\end{equation*}
	Proof: See \ref{special_case_proof}.
\end{theorem}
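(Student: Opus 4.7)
The plan is to specialize Theorem \ref{cv_rate_th} to $\alpha_k=\alpha_0(k+1)^{-\upsilon_1}$, $\gamma_k=\gamma_0(k+1)^{-\upsilon_2}$, verify that its two hypotheses hold, and check that the constants $\varsigma_1,\varsigma_2$ are finite in each regime. The two candidate bounds in Theorem \ref{cv_rate_th} decay as $\gamma_k^2 \propto (k+1)^{-2\upsilon_2}$ and $\alpha_k/\gamma_k \propto (k+1)^{-(\upsilon_1-\upsilon_2)}$; the stated rate $(k+1)^{-\min\{2\upsilon_2,\upsilon_1-\upsilon_2\}}$ then follows from a case split on whether $\upsilon_1\ge 3\upsilon_2$ or $\upsilon_1\le 3\upsilon_2$, since this is the split that picks the single bound which is actually available.

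First I would verify $\kappa_k<A$ and $\tau_k<A$ using Bernoulli's inequality. Writing $\gamma_{k+1}/\gamma_k = ((k+1)/(k+2))^{\upsilon_2}$ gives $1-(\gamma_{k+1}/\gamma_k)^2 \le 2\upsilon_2/(k+2)$, so
\[ \kappa_k \;\le\; \frac{2\upsilon_2}{\alpha_0\gamma_0}(k+2)^{\upsilon_1+\upsilon_2-1} \;\le\; \frac{2\upsilon_2}{\alpha_0\gamma_0} \;\le\; A, \]
using $\upsilon_1+\upsilon_2 \le 1$ together with the hypothesis $\alpha_0\gamma_0 \ge 2\upsilon_2/A$. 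The analogous estimate with $\alpha_k\gamma_k^{-1}=(\alpha_0/\gamma_0)(k+1)^{-(\upsilon_1-\upsilon_2)}$ gives $\tau_k \le (\upsilon_1-\upsilon_2)/(\alpha_0\gamma_0) \le A$. This identifies the $K_0$, hence $K_2$, beyond which Theorem \ref{cv_rate_th} applies.

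Next I would check that all relevant $\sigma_i$'s are finite by inspecting the exponents. The exponential decay of $\delta_k$ against polynomial $\gamma_k^{-2}$ and $\gamma_k/\alpha_k$ makes $\sigma_2$ and $\sigma_7$ trivially finite. For $\sigma_3=\max(\beta_k/\gamma_k^2)$ and $\sigma_8=\max(\gamma_k\beta_k/\alpha_k)$ I would invoke Lemma \ref{beta_k} to write $\beta_k \le c(k+1)^{-(3\upsilon_1-1)}$: then the exponent of $\sigma_8$ is $1-2\upsilon_1-\upsilon_2$, which is $\le 0$ already under $\upsilon_1>1/2$, while the exponent of $\sigma_3$ is $1-3\upsilon_1+2\upsilon_2$, which is $\le 0$ within Case 1 because $\upsilon_2\le\upsilon_1/3$ combined with $\upsilon_1>1/2$ forces $\upsilon_2\le(3\upsilon_1-1)/2$. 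The two pivotal quantities $\sigma_4=\max(\alpha_k/\gamma_k^3)$ and $\sigma_6=\max\sqrt{\gamma_k^3/\alpha_k}$ are finite precisely when $\upsilon_1\ge 3\upsilon_2$ and $\upsilon_1\le 3\upsilon_2$ respectively, which is exactly the case split that selects which of the two bounds of Theorem \ref{cv_rate_th} is usable.

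Finally, in Case 1 ($\upsilon_1\ge 3\upsilon_2$) the first bound of Theorem \ref{cv_rate_th} yields $D_k \le \varsigma_1^2\gamma_0^2(k+1)^{-2\upsilon_2}$, and in this regime $\min\{2\upsilon_2,\upsilon_1-\upsilon_2\}=2\upsilon_2$; in Case 2 ($\upsilon_1\le 3\upsilon_2$) the second bound yields $D_k \le \varsigma_2^2(\alpha_0/\gamma_0)(k+1)^{-(\upsilon_1-\upsilon_2)}$, and here $\min\{2\upsilon_2,\upsilon_1-\upsilon_2\}=\upsilon_1-\upsilon_2$. Taking $\Upsilon$ to be the applicable constant, which is finite in either case by the $\sigma_i$ check above, gives the claim for all $k\ge K_2$. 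I expect the main obstacle to be the careful bookkeeping of exponents for $\sigma_3$ and $\sigma_8$ under the relatively loose bound on $\beta_k$ supplied by Lemma \ref{beta_k}; everything else is essentially plug-and-check given the machinery already developed in Theorem \ref{cv_rate_th}.
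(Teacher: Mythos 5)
Your proposal is correct and follows essentially the same route as the paper: specialize Theorem \ref{cv_rate_th} to the power-law step sizes, bound $\kappa_k$ and $\tau_k$ by $2\upsilon_2/(\alpha_0\gamma_0)$ and $(\upsilon_1-\upsilon_2)/(\alpha_0\gamma_0)$ to invoke the hypothesis on $\alpha_0\gamma_0$, check finiteness of the relevant $\sigma_i$ via Lemma \ref{beta_k} and the exponential decay of $\delta_k$, and split on $\upsilon_1\gtrless 3\upsilon_2$ so that the available bound ($\gamma_k^2$ or $\alpha_k/\gamma_k$) is exactly the one with exponent $\min\{2\upsilon_2,\upsilon_1-\upsilon_2\}$, which is the paper's argument (the paper bounds $\sigma_1,\sigma_5$ by an explicit monotonicity analysis of $r(x)=x^{-1}(1-(1+x)^{-b})$ rather than Bernoulli, a cosmetic difference). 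One small slip: the tangent-line/Bernoulli estimate gives $1-(\gamma_{k+1}/\gamma_k)^2\leq 2\upsilon_2/(k+1)$, not $2\upsilon_2/(k+2)$ (the latter inequality actually goes the other way), but this does not affect your conclusion since $\kappa_k\leq \frac{2\upsilon_2}{\alpha_0\gamma_0}(k+1)^{\upsilon_1+\upsilon_2-1}\leq\frac{2\upsilon_2}{\alpha_0\gamma_0}$ still follows.
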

The parameters clearly affect the upper bound of the convergence rate or rate of expected divergence decay in Theorem \ref{special_case}. As it is evident that
\begin{equation*}
	\max\{2\upsilon_2,\upsilon_1-\upsilon_2\}\leq 0.5,
\end{equation*}
the best choice is when equality holds for $\upsilon_1 = 0.75$ and $\upsilon_2 = 0.25$. With the sufficient condition on the parameters in  Theorem \ref{special_case}, we can finally state that our algorithm converges with a rate of $O(\frac{1}{\sqrt{k}})$ after a sufficient number of iterations $k > K_2$.
\subsection{Regret Bound}
To further examine the performance of our algorithm, we present the following theorem on the achieved regret bound.
\begin{theorem}\label{rgrt}
	Let $\alpha_k$ and $\gamma_k$ have the forms of (\ref{step_sizes_eg}), with $\upsilon_1 = 0.75$ and $\upsilon_2 = 0.25$. Then, the regret bound is given by
	\begin{equation*}
		\mathbb{E}_{\mathcal{H}_k}\bigg[\sum_{k=1}^{K}\mathcal{F}(\bar{x}_k)-\mathcal{F}(x^*)\bigg] \leq \Upsilon L (\sqrt{K+1}-1).
	\end{equation*}
	Proof: See \ref{regret}.
\end{theorem}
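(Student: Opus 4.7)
The overall strategy is to pointwise bound each instantaneous regret $\mathcal{F}(\bar{x}_k) - \mathcal{F}(x^*)$ by a quadratic in the divergence $\|\bar{x}_k - x^*\|$, take expectations to invoke the mean-square rate $D_k \leq \Upsilon(k+1)^{-1/2}$ already established in Theorem \ref{special_case}, and finally sum via an integral test. The key choice is to use $L$-smoothness rather than Lipschitz continuity: a linear bound in $\|\bar{x}_k - x^*\|$ would force a Jensen step through $\sqrt{D_k} \sim (k+1)^{-1/4}$ and yield only $O(K^{3/4})$, whereas the smoothness route preserves the $(k+1)^{-1/2}$ scaling and produces the desired $O(\sqrt{K})$ total regret.

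For the pointwise bound, Lemma \ref{lipschitz} gives $L$-smoothness of $\mathcal{F}$, and Assumption \ref{objective_fct} guarantees $\nabla F_i(x^*) = 0$ for every $i$, so $\nabla \mathcal{F}(x^*) = 0$. The standard quadratic upper bound for smooth functions then reads
\begin{equation*}
    \mathcal{F}(\bar{x}_k) - \mathcal{F}(x^*) \leq \langle\nabla\mathcal{F}(x^*), \bar{x}_k - x^*\rangle + \frac{L}{2}\|\bar{x}_k - x^*\|^2 = \frac{L}{2}\|\bar{x}_k - x^*\|^2.
\end{equation*}
Taking expectation over $\mathcal{H}_k$ and invoking Theorem \ref{special_case} with $\upsilon_1 = 0.75$ and $\upsilon_2 = 0.25$ (so that $\min\{2\upsilon_2,\upsilon_1-\upsilon_2\}=1/2$) yields $\mathbb{E}[\mathcal{F}(\bar{x}_k) - \mathcal{F}(x^*)] \leq \tfrac{\Upsilon L}{2}(k+1)^{-1/2}$ for all $k \geq K_2$.

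Summing and bounding via the integral test
$\sum_{k=1}^{K}(k+1)^{-1/2} \leq \int_{0}^{K}(x+1)^{-1/2}\,dx = 2(\sqrt{K+1}-1)$
then produces the stated regret $\Upsilon L(\sqrt{K+1}-1)$. The main (and only mild) obstacle is the transient regime $k < K_2$, in which Theorem \ref{special_case} does not directly apply; since the iterates are almost surely bounded by the standing hypothesis of Theorem \ref{cv_th} and $\mathcal{F}$ is continuous, this prefix contributes only a finite additive constant that can be absorbed into a possibly enlarged $\Upsilon$ without affecting the $O(\sqrt{K})$ order or the exact form of the stated bound.
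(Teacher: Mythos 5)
Your proposal is correct and follows essentially the same route as the paper: the $L$-smoothness bound $\mathcal{F}(\bar{x}_k)-\mathcal{F}(x^*)\leq \frac{L}{2}\|\bar{x}_k-x^*\|^2$ (using $\nabla\mathcal{F}(x^*)=0$), the rate $D_k\leq \Upsilon(k+1)^{-1/2}$ from Theorem \ref{special_case} with $\upsilon_1=0.75$, $\upsilon_2=0.25$, and the integral comparison $\sum_{k=1}^{K}(k+1)^{-1/2}\leq 2(\sqrt{K+1}-1)$. Your explicit handling of the transient iterations $k<K_2$ (absorbing their finite contribution into an enlarged $\Upsilon$) is in fact slightly more careful than the paper, which applies the rate to all $k\geq 1$ without comment.
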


\section{Numerical Results}
In this section, we give numerical examples to illustrate the performance of the 1P-DSGT algorithm. We compare it with a general DSGT algorithm based on an unbiased estimator with bounded variance. For this unbiased estimator, we calculate the exact gradient and add white noise to it. The network topology is a connected Erd\H{o}s-Rényi random graph with a probability of $0.3$.

We consider a logistic classification problem to classify $m$ images of the two digits, labeled as $y_{ij} = +1$ or $-1$ from the MNIST data set \citep{mnist}. Each image, $X_{ij}$, is a $785$-dimensional vector and is compressed using a lossy autoencoder to become $10$-dimensional, i.e., $d=10$. The total images are split equally among the agents such that each agent has $m_i = \frac{m}{n}$ images and no access to other ones for privacy constraints. However, the goal is still to make use of all images and to solve collaboratively
\begin{equation*}
	min_{\theta\in\mathbb{R}^d}\frac{1}{n}\sum_{i=1}^{n}\frac{1}{m}\sum_{j=1}^{m_i}\mathbb{E}_{u\sim\mathcal{N}(1,\sigma_u)} \ln(1+\exp(-u_{ij} y_{ij}.X_{ij}^T\theta))+c\|\theta\|^2,
\end{equation*}
while reaching consensus on the decision variable $\theta \in\mathbb{R}^d$. We note here that $u$ models some perturbation on the local querying of every example to add to the randomization of the communication process. 

For the first example, we consider classifying the digits $2$ and $9$ where $m=11907$ images. There are $n=21$ agents in the network and thus each has a local batch of $m_i = 567$ images. We take $\sigma_u = 0.01$ and let $\alpha_k = 0.2(k+1)^{-0.75}$, $\gamma_{k} = 1.3(k+1)^{-0.25}$, and $\Phi_k \in 1.5\times\{-\frac{1}{\sqrt{d}},\frac{1}{\sqrt{d}}\}^d$ with equal probability. Also, every function query is subject to a white noise generated by the standard normal distribution. For the general DSGT algorithm, we let the step size to $\alpha_k = 4(k+1)^{-0.75}$, and we don't consider the perturbation on the objective function nor the noise on the objective function, only the noise on the exact gradient. We let $c=0.1$, and the initialization be the same for both algorithms, with $\theta_{i,0}$ uniformly chosen from $[-0.5,0.5]^d$, $\forall i\in\mathcal{N}$, per instance. We finally average the simulations over $50$ instances. 

The expected evolution of the loss objective function is present in Figure \ref{loss-2-9}. Since the DSGT algorithm with vanishing step size converges at a rate of $O(\frac{1}{k})$ \citep{nedic}, the result we obtain is actually anticipated. Next, at every iteration, we measure the accuracy of the classification against an independent test set of $2041$ images using the updated mean vector $\bar{\theta}_k=\frac{1}{n}\sum_{i=1}^{n}\theta_{i,k}$ of the local decision variables and we present the results in Figure \ref{accuracy-2-9}. 1P-DSGT achieves an accuracy of $95.3\%$ whereas DSGT achieves $97.34\%$ at the final iteration, which are both quite good results and an especially important one for 1P-DSGT considering the difference in the convergence rate.

\begin{minipage}[t]{0.45\textwidth}
	\includegraphics[width=1\textwidth]{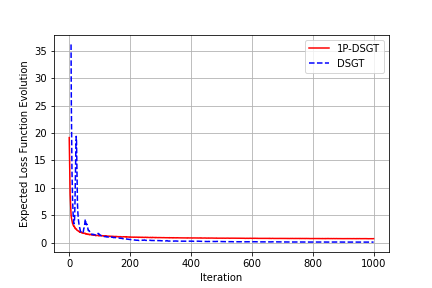}
	\captionof{figure}{Expected loss function evolution of the algorithms 1P-DSGT vs. DSGT considering images of the digits $2$ and $9$.}
	\label{loss-2-9}
\end{minipage}
\hfill
\begin{minipage}[t]{0.45\textwidth}
	\includegraphics[width=1\textwidth]{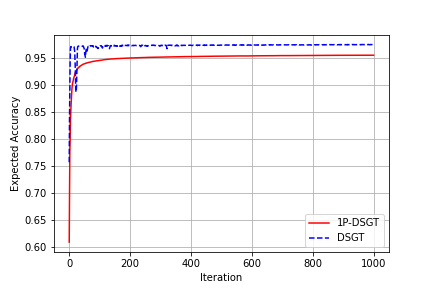}
	\captionof{figure}{Expected accuracy evolution of the algorithms 1P-DSGT vs. DSGT considering images of the digits $2$ and $9$.}
	\label{accuracy-2-9}
\end{minipage}

We display the same result for the last two Figures, \ref{consensus-2-9} and \ref{consensus-z-2-9}; only the second Figure is shown for later iterations to zoom in on the curves. The curves are those of the evolution of the consensus error, or $\sum_{i=1}^{n}\|\theta_{i,k}-\theta_k\|^2$ which is the error between the local decision variables and their average. For both algorithms, the error decreases quite fast. Nonetheless, a swift consensus is reached by 1P-DSGT compared to that of DSGT, which is unpredictable considering all the randomizations. This is an outstanding result for parallel training and computation in networked resource nodes, as consensus no longer raises an issue. 

\begin{minipage}[t]{0.45\textwidth}
	\includegraphics[width=1\textwidth]{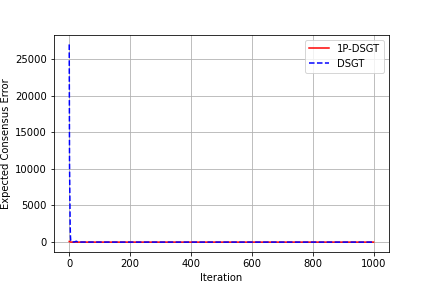}
	\captionof{figure}{Expected consensus error evolution of the algorithms 1P-DSGT vs. DSGT considering images of the digits $2$ and $9$.}
	\label{consensus-2-9}
\end{minipage}
\hfill
\begin{minipage}[t]{0.45\textwidth}
	\includegraphics[width=1\textwidth]{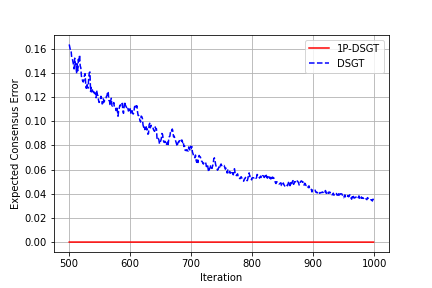}
	\captionof{figure}{Expected consensus error evolution of the algorithms 1P-DSGT vs. DSGT considering images of the digits $2$ and $9$.}
	\label{consensus-z-2-9}
\end{minipage}

The second example is the classification of the images of the digits $3$ and $7$. We have $m=12396$ and we consider $n=6$ agents, thus $m_i=2066$, $\forall i$. Here, we take $\alpha_k = 0.15(k+1)^{-0.75}$, and everything else remains exactly the same as in the previous example. The results are shown in Figures \ref{loss-3-7} through \ref{consensus-z-3-7}. We note that 1P-DSGT achieves $92.46\%$ accuracy, and DSGT achieves $97.44\%$ at the last iteration.

\begin{minipage}[t]{0.45\textwidth}
	\includegraphics[width=1\textwidth]{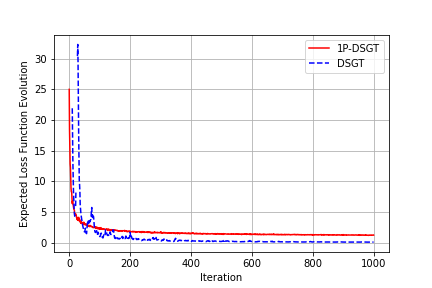}
	\captionof{figure}{Expected loss function evolution of the algorithms 1P-DSGT vs. DSGT considering images of the digits $3$ and $7$.}
	\label{loss-3-7}
\end{minipage}
\hfill
\begin{minipage}[t]{0.45\textwidth}
	\includegraphics[width=1\textwidth]{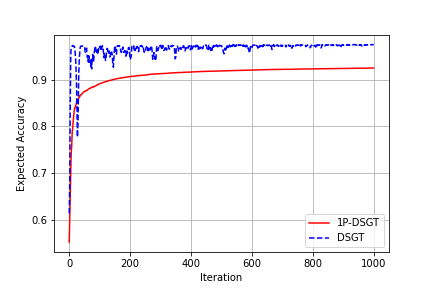}
	\captionof{figure}{Expected accuracy evolution of the algorithms 1P-DSGT vs. DSGT considering images of the digits $3$ and $7$.}
	\label{accuracy-3-7}
\end{minipage}

\begin{minipage}[t]{0.45\textwidth}
	\includegraphics[width=1\textwidth]{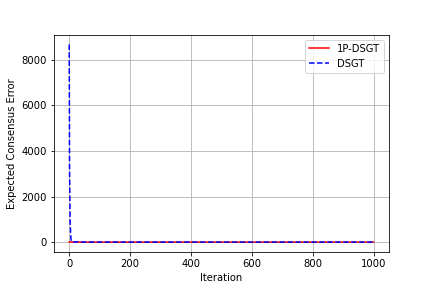}
	\captionof{figure}{Expected consensus error evolution of the algorithms 1P-DSGT vs. DSGT considering images of the digits $3$ and $7$.}
	\label{consensus-3-7}
\end{minipage}
\hfill
\begin{minipage}[t]{0.45\textwidth}
	\includegraphics[width=1\textwidth]{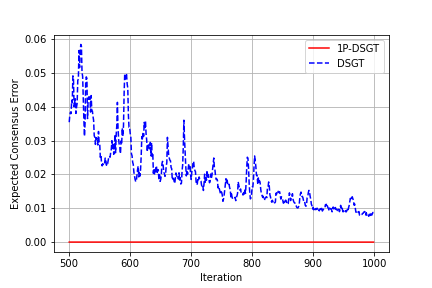}
	\captionof{figure}{Expected consensus error evolution of the algorithms 1P-DSGT vs. DSGT considering images of the digits $3$ and $7$.}
	\label{consensus-z-3-7}
\end{minipage}
\vspace{0.635cm}

One ending remark we can bring attention to is that the 1P-DSGT algorithm is clearly more robust against the distortion brought by the compression of the images. While the DSGT algorithm introduces noticeable fluctuations to all results, 1P-DSGT demonstrates a steady evolution towards the optimum.

\section{Conclusion}
In this work, we extended the gradient-tracking algorithm to present a practical solution to a relevant problem with realistic assumptions. A distributed stochastic gradient-tracking algorithm was studied and proved to converge with a biased and high variance one-point gradient estimate and a stochastic perturbation on the objective function. The convergence rate was proved to be $O(\frac{1}{\sqrt{k}})$ and the regret bound that of $O(\sqrt{k})$. A numerical application confirmed the success and efficiency of the algorithm. For further work, we wish to relax some of the assumptions.




\appendix 
\renewcommand\thesection{\appendixname\ \Alph{section}}
\section{Estimated Gradient}\label{gradient}
In this section, we derive the bias of the gradient estimate with respect to the real gradient of the local objective function. Let
\begin{equation*}
	\breve{g}_{i,k}=\mathbb{E}_{S,\Phi,\zeta} [g_{i,k}|\mathcal{H}_k].
\end{equation*}
Thus,
\begin{equation*}
	\begin{split}
		\breve{g}_{i,k}			&=\mathbb{E}_{S,\Phi,\zeta} [\Phi_{i,k}(f_i(x_{i,k}+\gamma_{k}\Phi_{i,k}, S_{i,k})+\zeta_{i,k})|\mathcal{H}_k]\\
		&=\mathbb{E}_{S,\Phi} [\Phi_{i,k}f_i(x_{i,k}+\gamma_{k}\Phi_{i,k}, S_{i,k})|\mathcal{H}_k]\\
		&=\mathbb{E}_{\Phi} [\Phi_{i,k}F_i(x_{i,k}+\gamma_{k}\Phi_{i,k})|\mathcal{H}_k].
	\end{split}
\end{equation*}
By Taylor’s theorem and the mean-valued theorem, there exists $\tilde{x}_{i,k}$ located between $x_{i,k}$ and $x_{i,k}+\gamma_{k}\Phi_{i,k}$ where
\begin{equation*}
	F_i(x_{i,k}+\gamma_{k}\Phi_{i,k})= F_i(x_{i,k})+\gamma_{k}\langle \Phi_{i,k},\nabla F_i(x_{i,k})\rangle +\frac{\gamma_{k}^2}{2} \langle\Phi_{i,k}, \nabla^2 F_i(\tilde{x}_{i,k})\Phi_{i,k}\rangle, 
\end{equation*}
substituting in the previous definition,
\begin{equation*}
	\begin{split}
		\breve{g}_{i,k} &= F_i(x_{i,k})\mathbb{E}_{\Phi} [\Phi_{i,k}]+\gamma_{k}\mathbb{E}_{\Phi}[\Phi_{i,k} \Phi_{i,k}^T]\nabla F_i(x_{i,k})+ \frac{\gamma_{k}^2}{2} \mathbb{E}_{\Phi}[\Phi_{i,k} \Phi_{i,k}^T \nabla^2  F_i(\tilde{x}_{i,k}) \Phi_{i,k}|\mathcal{H}_k]\\
		&= \alpha_2 \gamma_{k}[\nabla F_i(x_{i,k})+ b_{i,k}].
	\end{split}
\end{equation*}
Thus, the estimation bias has the form
\begin{equation*}
	\begin{split}
		b_{i,k}&= \frac{\breve{g}_{i,k}}{\alpha_2 \gamma_{k}}-\nabla F_i(x_{i,k})\\
		&= \frac{\gamma_{k}}{2 \alpha_2 } \mathbb{E}_{\Phi}[\Phi_{i,k} \Phi_{i,k}^T \nabla^2 F_i(\tilde{x}_{i,k}) \Phi_{i,k}|\mathcal{H}_k].
	\end{split}
\end{equation*}
Let Assumptions \ref{objective_fct} and \ref{perturbation} hold. Then, we can bound the bias as
\begin{equation*}
	\begin{split}
		\|b_{i,k}\| &\leq \frac{\gamma_{k}}{2 \alpha_2 } \mathbb{E}_{\Phi}[ \| \Phi_{i,k}\|_2 \|\Phi_{i,k}^T\|_2 \|\nabla^2 F_i(\tilde{x}_{i,k})\|_2 \|\Phi_{i,k}\|_2|\mathcal{H}_k] \\
		&\leq \gamma_{k} \frac{\alpha_3^3 \alpha_1}{2 \alpha_2 }. 
	\end{split}
\end{equation*}
We can see $\|b_{i,k}\| \rightarrow 0$ as $k\rightarrow \infty$ since $\gamma_{k}$ is vanishing. 	
We remark that 
\begin{equation}\label{exp_g_bar}
	\begin{split}
		\tilde{g}_k &= \mathbb{E}[\bar{g}_k|\mathcal{H}_k]\\
		&= \frac{1}{n}\sum_{i=1}^{n} \mathbb{E}[g_{i,k}|\mathcal{H}_k]\\
		&= \frac{1}{n}\sum_{i=1}^{n}\alpha_2 \gamma_{k}[\nabla F_i(x_{i,k})+ b_{i,k}]\\
		&=\alpha_2 \gamma_{k}[h(\mathbf{x}_k)+ \bar{b}_{k}]
	\end{split}
\end{equation}
is also a biased estimator of $h(\mathbf{x}_k)$ with
\begin{equation}\label{b_bar}
	\begin{split}
		\|\bar{b}_{k}\| &= \|\frac{1}{n}\sum_{i=1}^{n}b_{i,k}\|\\
		&\leq \frac{1}{n}\sum_{i=1}^{n}\|b_{i,k}\|\\
		&\leq \frac{1}{n}\sum_{i=1}^{n} \gamma_{k} \frac{\alpha_3^3 \alpha_1}{2 \alpha_2 }\\
		&=\gamma_{k} \frac{\alpha_3^3 \alpha_1}{2 \alpha_2 }.
	\end{split}	
\end{equation}

\begin{lemma}\label{grdt_bnd}
	Let all Assumptions $2-6$ hold and
	$\|\mathbf{x}_k\|<\infty$ almost surely, then there exists a bounded constant $M > 0$, such that $E[\|\mathbf{g}_{k}\|^2]< M$ almost surely.
\end{lemma}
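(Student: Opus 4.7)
\textbf{Proof plan for Lemma \ref{grdt_bnd}.} The plan is to exploit the product structure of the one-point estimator \eqref{grdt_estimate} and bound each factor separately using the standing assumptions. Writing
\begin{equation*}
\|\mathbf{g}_k\|^2 = \sum_{i=1}^{n}\|g_{i,k}\|^2 = \sum_{i=1}^{n}\|\Phi_{i,k}\|^2\bigl(f_i(x_{i,k}+\gamma_k\Phi_{i,k},S_{i,k})+\zeta_{i,k}\bigr)^2,
\end{equation*}
Assumption \ref{perturbation} lets us pull the factor $\|\Phi_{i,k}\|^2 \le \alpha_3^2$ outside. Applying the elementary inequality $(a+b)^2 \le 2a^2 + 2b^2$ separates the function-value contribution from the noise contribution:
\begin{equation*}
\|g_{i,k}\|^2 \le 2\alpha_3^2 f_i(x_{i,k}+\gamma_k\Phi_{i,k},S_{i,k})^2 + 2\alpha_3^2 \zeta_{i,k}^2.
\end{equation*}

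The noise term is immediate: by Assumption \ref{noise}, $\mathbb{E}[\zeta_{i,k}^2]=\alpha_4<\infty$, so that piece contributes at most $2n\alpha_3^2\alpha_4$ to $\mathbb{E}[\|\mathbf{g}_k\|^2]$.

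For the function-value term I would use Assumption \ref{local_fcts} to anchor $f_i(\cdot,S_{i,k})$ at the optimum $x^*$:
\begin{equation*}
|f_i(x_{i,k}+\gamma_k\Phi_{i,k},S_{i,k})| \le |f_i(x^*,S_{i,k})| + L_S\bigl(\|x_{i,k}-x^*\| + \gamma_k\alpha_3\bigr),
\end{equation*}
so squaring and using $(a+b+c)^2 \le 3(a^2+b^2+c^2)$ yields
\begin{equation*}
f_i(x_{i,k}+\gamma_k\Phi_{i,k},S_{i,k})^2 \le 3 f_i(x^*,S_{i,k})^2 + 3L_S^2\|x_{i,k}-x^*\|^2 + 3L_S^2\gamma_k^2\alpha_3^2.
\end{equation*}
The last term is uniformly bounded since $\gamma_k$ is vanishing (hence bounded); the middle term is finite almost surely by the hypothesis $\|\mathbf{x}_k\|<\infty$ a.s.; and the first term has finite expectation (the standing moment condition in Assumption \ref{local_fcts}, together with the Lipschitz property, ensures $\mathbb{E}_S f_i(x^*,S)^2<\infty$, which I will invoke as the natural strengthening of $\mathbb{E}_S f_i(x,S)<\infty$ needed here).

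Summing over $i$ and taking the expectation conditioned on $\mathcal{H}_k$ then produces a finite bound
\begin{equation*}
\mathbb{E}[\|\mathbf{g}_k\|^2 \mid \mathcal{H}_k] \le 2n\alpha_3^2\alpha_4 + 6\alpha_3^2\sum_{i=1}^{n}\Bigl(\mathbb{E}_S f_i(x^*,S)^2 + L_S^2\|x_{i,k}-x^*\|^2 + L_S^2\gamma_k^2\alpha_3^2\Bigr),
\end{equation*}
which is finite almost surely and may be taken as $M$. The main obstacle is the first term on the right: Assumption \ref{local_fcts} as stated only guarantees $\mathbb{E}_S f_i(x,S)<\infty$, so one has to be slightly careful in reading it as furnishing a finite second moment at $x^*$ (or equivalently assuming $|f_i(x^*,S)|$ is sub-Gaussian or simply bounded uniformly in $S$, as is standard in the stochastic zero-order literature); once this is granted, the rest of the argument is a routine combination of Assumptions \ref{objective_fct}--\ref{perturbation} with the almost-sure boundedness hypothesis on $\mathbf{x}_k$.
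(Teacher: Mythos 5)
Your proof is correct and takes essentially the same route as the paper: pull out $\|\Phi_{i,k}\|^2\leq\alpha_3^2$ via Assumption \ref{perturbation}, split off the additive noise using Assumption \ref{noise}, and control the function value through the Lipschitz property of Assumption \ref{local_fcts} together with the almost-sure boundedness of $\mathbf{x}_k$ (the paper anchors at $0$ rather than at $x^*$, a cosmetic difference). The second-moment caveat you flag is treated no more rigorously in the paper, which simply sets $\mu=\mathbb{E}[\mu_{S_{i,k}}^2]$ and $L'=\mathbb{E}[L_{S_{i,k}}^2]$ and takes these to be finite, so your explicit acknowledgment of that implicit strengthening is if anything more careful.
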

\begin{proof}
	$\forall i\in\mathcal{N}$, we have
	\begin{equation*}
		\begin{split}
			\mathbb{E}[\|g_{i,k}\|^2|\mathcal{H}_{k}]
			&=\mathbb{E}[\|\Phi_{i,k}(f_i(x_{i,k}+\gamma_{k}\Phi_{i,k}, S_{i,k})+\zeta_{i,k})\|^2|\mathcal{H}_{k}]\\
			&=\mathbb{E}[\|\Phi_{i,k}\|^2\|f_i(x_{i,k}+\gamma_{k}\Phi_{i,k}, S_{i,k})+\zeta_{i,k}\|^2|\mathcal{H}_{k}]\\
			&\overset{(a)}{\leq} \alpha_3^2 \mathbb{E}[(f_i(x_{i,k}+\gamma_{k}\Phi_{i,k}, S_{i,k})+\zeta_{i,k})^2|\mathcal{H}_{k}]\\
			&\overset{(b)}{=}\alpha_3^2\mathbb{E}[f_i^2(x_{i,k}+\gamma_{k}\Phi_{i,k}, S_{i,k})|\mathcal{H}_{k}]+\alpha_3^2\alpha_4\\
			&\overset{(c)}{\leq} \alpha_3^2\mathbb{E}[(\|f_i(0, S_{i,k})\|+L_{S_{i,k}}\|x_{i,k}+\gamma_{k}\Phi_{i,k}\|)^2|\mathcal{H}_{k}]+\alpha_3^2\alpha_4\\
			&\overset{(d)}{\leq} 2\alpha_3^2\mathbb{E}[\mu_{S_{i,k}}^2+L_{S_{i,k}}^2(\|x_{i,k}\|+\gamma_{k}\alpha_3)^2|\mathcal{H}_{k}]+\alpha_3^2\alpha_4\\
			&\overset{(e)}{=} 2\alpha_3^2(\mu+L' (\|x_{i,k}\|+\gamma_{k}\alpha_3)^2)+\alpha_3^2\alpha_4\\
			&<\infty,
		\end{split}
	\end{equation*}
	where $(a)$ is due to Assumption \ref{perturbation}, $(b)$ Assumption \ref{noise}, and $(c)$ Assumption \ref{local_fcts}. We denote $\|f_i(0, S_{i,k})\|=\mu_{S_{i,k}}$ in $(d)$ and the inequality is due to $\frac{x+y}{2}\leq \sqrt{\frac{x^2+y^2}{2}}$, $\forall x, y \in\mathbb{R}$. In $(e)$, $\mu=\mathbb{E}[\mu_{S_{i,k}}^2]$ and $L'=\mathbb{E}[L_{S_{i,k}}^2]$.
\end{proof}
\section{Stochastic Noise}\label{martingale_proof} 
To prove Lemma \ref{martingale}, we begin by demonstrating that the sequence $\{\sum_{k=K}^{K'} \alpha_k e_k\}_{K'\geq K}$ is a martingale. Since $\bar{g}_k$ and $\bar{g}_{k'}$ are independent if $k\neq k'$ and 
\begin{equation*}
	\begin{split}
		\mathbb{E}[e_k]=&\mathbb{E}[\bar{g}_k-\mathbb{E}[\bar{g}_k|\mathcal{H}_k]]\\
		=&\mathbb{E}_{\mathcal{H}_k}[\mathbb{E}[\bar{g}_k-\mathbb{E}[\bar{g}_k|\mathcal{H}_k]|\mathcal{H}_k]]\\
		=&0
	\end{split}
\end{equation*}
by the law of total expectation, the sequence is a martingale. Therefore, for any constant $\nu>0$, we can state
\begin{equation*}
	\begin{split}
		\mathbb{P}(\sup_{K'\geq K}\|\sum_{k=K}^{K'}\alpha_k e_k\|\geq\nu) &\overset{(a)}{\leq} \mathbb{E}(\|\sum_{k=K}^{K'}\alpha_k e_k\|^2)\\
		&=\frac{1}{\nu^2}\mathbb{E}(\sum_{k=K}^{K'}\sum_{k'=K}^{K'}\alpha_k \alpha_{k'}\langle e_k, e_{k'}\rangle )\\
		&\overset{(b)}{=}\frac{1}{\nu^2}\mathbb{E}(\sum_{k=K}^{K'}\|\alpha_k e_k\|^2)\\
		&\leq\frac{1}{\nu^2}\sum_{k=K}^{\infty}\mathbb{E}(\alpha_k^2\|\bar{g}_k-\mathbb{E}[\bar{g}_k|\mathcal{H}_k]\|^2)\\
		&=\frac{1}{\nu^2}\sum_{k=K}^{\infty}\alpha_k^2\mathbb{E}(\|\bar{g}_k\|^2)-\mathbb{E}_{\mathcal{H}_k}(\|\mathbb{E}[\bar{g}_k|\mathcal{H}_k]\|^2)\\
		&\leq\frac{1}{\nu^2}\sum_{k=K}^{\infty}\alpha_k^2\mathbb{E}(\|\bar{g}_k\|^2)\\
		&\overset{(c)}{\leq}\frac{M}{\nu^2}\sum_{k=K}^{\infty}\alpha_k^2,\\
	\end{split}
\end{equation*}
where $(a)$ is due to Doob's martingale inequality \citep{doob}, $(b)$ is since $\mathbb{E}[\langle e_k,e_{k'}\rangle]	= 0$ for any $k\neq k'$, and $(c)$ is by Lemma \ref{grdt_bnd}.

Since $M$ is a bounded constant and $\lim_{K\rightarrow\infty}\sum_{k=K}^{\infty} \alpha_k^2= 0$ by Assumption \ref{step_sizes}, we get $\lim_{K\rightarrow\infty}\frac{M}{\nu^2}\sum_{k=K}^{\infty}\alpha_k^2 =0$ for any bounded constant $\nu$. Hence, the probability that $\|\sum_{k=K}^{K'}\alpha_k e_k\|\geq \nu$ also vanishes as $K\rightarrow\infty$, which concludes the proof.
\section{Proof of Convergence}\label{convergence}
We start by stating the following lemma that will be useful for the proof of convergence.
\begin{lemma}\label{xx_bar_sum}
	If all Assumptions $1-6$ hold and $\|\mathbf{x}_k\|<\infty$ almost surely, then $\lim_{k\rightarrow\infty}\|\mathbf{x}_k-\mathbf{1}\bar{x}_k\|^2 =0 $.
	In fact, we have $\sum_{k=0}^{\infty}\| \mathbf{x}_k-\mathbf{1}\bar{x}_k\|^2<\infty$ as well as
	\begin{equation*}
		\sum_{k=0}^{\infty}\gamma_{k}\alpha_k\| \mathbf{x}_k-\mathbf{1}\bar{x}_k\|<\infty
	\end{equation*}
	almost surely.
	
	Proof: See \ref{xx_bar_sum_proof}.
\end{lemma}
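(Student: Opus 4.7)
My plan is to set up coupled linear recursions for the consensus errors $\tilde{\mathbf{x}}_k := \mathbf{x}_k - \mathbf{1}\bar{x}_k$ and $\tilde{\mathbf{y}}_k := \mathbf{y}_k - \mathbf{1}\bar{y}_k$, control them in expectation using Lemma \ref{rho_w}, and then lift the resulting summability from $\mathbb{E}$ to an almost-sure statement via Tonelli. First I would take the row average of both update rules in (\ref{compact}); the double stochasticity of $W$ gives $\bar{x}_{k+1} = \bar{x}_k - \alpha_k \bar{y}_k$ and $\bar{y}_{k+1} - \bar{y}_k = \bar{g}_{k+1} - \bar{g}_k$. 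Subtracting the mean and using $W\mathbf{1} = \mathbf{1}$ yields
$$\tilde{\mathbf{x}}_{k+1} = W\tilde{\mathbf{x}}_k - \alpha_k W\tilde{\mathbf{y}}_k, \qquad \tilde{\mathbf{y}}_{k+1} = W\tilde{\mathbf{y}}_k + \tilde{\mathbf{g}}_{k+1} - \tilde{\mathbf{g}}_k,$$
with $\tilde{\mathbf{g}}_k := \mathbf{g}_k - \mathbf{1}\bar{g}_k$, and Lemma \ref{rho_w} applies to both right-hand sides since the row averages inside the $W$-action are zero.

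The crux is a Young's-inequality step $\|a+b\|^2 \leq (1+\eta)\|a\|^2 + (1+1/\eta)\|b\|^2$ with $\eta$ chosen so that $\rho_w^2(1+\eta) = \delta := (1+\rho_w^2)/2 < 1$, which matches the decay rate $\delta_k$ introduced in Theorem \ref{cv_rate_th}. Applied to the $\tilde{\mathbf{y}}$-recursion and followed by taking full expectation, the elementary bound $\|\tilde{\mathbf{g}}_k\|^2 \leq \|\mathbf{g}_k\|^2$ together with Lemma \ref{grdt_bnd}'s $\mathbb{E}\|\mathbf{g}_k\|^2 \leq M$ turns the recursion into the contractive scalar inequality
$$\mathbb{E}\|\tilde{\mathbf{y}}_{k+1}\|^2 \leq \delta\, \mathbb{E}\|\tilde{\mathbf{y}}_k\|^2 + C_1 M,$$
from which a uniform bound $\mathbb{E}\|\tilde{\mathbf{y}}_k\|^2 \leq G^2$ follows. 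The same Young step applied to the $\tilde{\mathbf{x}}$-recursion, combined with this uniform $\tilde{\mathbf{y}}$-bound, then gives $\mathbb{E}\|\tilde{\mathbf{x}}_{k+1}\|^2 \leq \delta\, \mathbb{E}\|\tilde{\mathbf{x}}_k\|^2 + C_2 \alpha_k^2 G^2$, whose unrolled form is exactly the convolution $\beta_k = \sum_j \delta^j \alpha_{k-j}^2$ from Theorem \ref{cv_rate_th}.

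Summing this last inequality over $k$ and swapping the order of summation, Assumption \ref{step_sizes} ($\sum_k \alpha_k^2 < \infty$) immediately yields $\sum_k \mathbb{E}\|\tilde{\mathbf{x}}_k\|^2 < \infty$; by Tonelli this gives $\sum_k \|\tilde{\mathbf{x}}_k\|^2 < \infty$ almost surely, which forces $\|\tilde{\mathbf{x}}_k\|^2 \to 0$ almost surely. For the weighted sum I would invoke AM-GM, $\gamma_k \alpha_k \|\tilde{\mathbf{x}}_k\| \leq \tfrac{1}{2}(\alpha_k^2 + \gamma_k^2 \|\tilde{\mathbf{x}}_k\|^2)$, and use that $\gamma_k$ is uniformly bounded (it vanishes by Assumption \ref{step_sizes}), so both summands are almost surely summable. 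The main obstacle will be making the uniform-in-$k$ bound on $\mathbb{E}\|\tilde{\mathbf{y}}_k\|^2$ rigorous: Lemma \ref{grdt_bnd}'s constant $M$ depends on $\|x_{i,k}\|$, so one must leverage the a.s.\ boundedness hypothesis on $\|\mathbf{x}_k\|$ to extract a $k$-uniform constant, and the Young-inequality balance must be carried out with care since the doubled $\tilde{\mathbf{g}}$ term in the $\tilde{\mathbf{y}}$-recursion forces an extra factor of $2$ through $(a+b)^2 \leq 2a^2 + 2b^2$, which must not spoil the strict inequality $\delta < 1$.
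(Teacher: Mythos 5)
Your proposal is correct and reaches all three claims, but it departs from the paper's route at several points, so a comparison is in order. The contraction step for the consensus error in $\mathbf{x}$ is identical: both you and the paper write $\mathbf{x}_{k+1}-\mathbf{1}\bar{x}_{k+1}=W(\mathbf{x}_k-\mathbf{1}\bar{x}_k)-\alpha_k(W\mathbf{y}_k-\mathbf{1}\bar{y}_k)$ and apply Young's inequality with Lemma \ref{rho_w} so that the contraction factor is $\frac{1+\rho_w^2}{2}$. After that the arguments diverge. For the $\mathbf{y}$-consensus error, the paper does not run a one-step contraction in expectation as you do; it fully unrolls the $\mathbf{y}$-recursion into $\sum_{j}(W-\tfrac{1}{n}\mathbf{1}\mathbf{1}^T)^{j}(W-I)\mathbf{g}_{k-1-j}+\mathbf{g}_k-\mathbf{1}\bar{g}_k$ and bounds it pathwise by a geometric series, obtaining an almost-sure uniform constant $G$; your version yields instead a uniform bound on $\mathbb{E}\|\mathbf{y}_k-\mathbf{1}\bar{y}_k\|^2$, which is equally serviceable but forces the rest of your argument to live in expectation and then return to the almost-sure statement via Tonelli. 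For summability of $\|\mathbf{x}_k-\mathbf{1}\bar{x}_k\|^2$ the paper telescopes the pathwise recursion and argues by contradiction (a divergent sum would force the nonnegative left-hand side below zero), whereas your summation of the expectation recursion plus an exchange of summation is more direct and even delivers summability in expectation as a by-product. For the weighted sum $\sum_k\gamma_k\alpha_k\|\mathbf{x}_k-\mathbf{1}\bar{x}_k\|$, the paper substitutes the unrolled square-root bound and performs a double-sum exchange exploiting monotonicity of the step sizes, while your AM-GM split $\gamma_k\alpha_k\|\mathbf{x}_k-\mathbf{1}\bar{x}_k\|\le\tfrac12(\alpha_k^2+\gamma_k^2\|\mathbf{x}_k-\mathbf{1}\bar{x}_k\|^2)$ together with boundedness of $\gamma_k$ is noticeably shorter and reuses the already-established squared summability. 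The caveat you flag yourself, namely that the constant in Lemma \ref{grdt_bnd} depends on $\|x_{i,k}\|$ and must be made uniform in $k$ via the a.s. boundedness hypothesis, is real, but the paper's own proof silently makes the same uniformity assumption when it promotes the bound on $\|\mathbf{g}_k\|$ to a $k$-independent constant $M'$ (and hence $G$), so this is not a gap specific to your approach; it would only need care if one insisted on full rigor, in which case the cleanest fix in your setting is to run your contraction inequalities pathwise with conditional expectations rather than taking full expectation.
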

\subsection{Proof of Theorem \ref{cv_th}}\label{to_use}
The goal is to write the divergence in terms of its previous term and to prove that it's finally vanishing.	We know that $\bar{x}_{k+1} = \bar{x}_{k}-\alpha_k \bar{g}_k$.	With this equation, the divergence at time $k+1$ can be written as
\begin{equation*}
	\begin{split}
		d_{k+1}&=\|\bar{x}_{k+1}-x^*\|^2\\
		&=\|\bar{x}_k-\alpha_k\bar{g}_k-x^*\| \\
		&=\|\bar{x}_k-x^*\|^2-2\alpha_k\langle\bar{x}_k-x^*,\bar{g}_k-\mathbb{E}[\bar{g}_k|\mathcal{H}_k]+\mathbb{E}[\bar{g}_k|\mathcal{H}_k]\rangle +\alpha_k^2\|\bar{g}_k\|^2\\
		&=\|\bar{x}_k-x^*\|^2-2\alpha_k\langle\bar{x}_k-x^*,\mathbb{E}[\bar{g}_k|\mathcal{H}_k]\rangle -2\alpha_k\langle\bar{x}_k-x^*,e_k\rangle  +\alpha_k^2\|\bar{g}_k\|^2\\
		&=\|\bar{x}_k-x^*\|^2-2\alpha_k\langle\bar{x}_k-x^*,\mathbb{E}[\bar{g}_k|\mathcal{H}_k]\rangle -2\alpha_k\langle\bar{x}_k-x^*,e_k\rangle  +\alpha_k^2\|\bar{g}_k\|^2\\
		&\overset{(a)}{=} d_k - 2\alpha_2\gamma_{k}\alpha_k\langle\bar{x}_k-x^*,h(\mathbf{x}_k)+\bar{b}_k\rangle -2\alpha_k\langle\bar{x}_k-x^*,e_k\rangle +\alpha_k^2\|\bar{g}_k\|^2\\
		&= d_k - 2\alpha_2\gamma_{k}\alpha_k\langle\bar{x}_k-x^*,\nabla \mathcal{F}(\bar{x}_k)+\bar{b}_k\rangle
		+2\alpha_2\gamma_{k}\alpha_k\langle\bar{x}_k-x^*,\nabla \mathcal{F}(\bar{x}_k)-h(\mathbf{x}_k)\rangle \\
		&\hspace{0.5cm} -2\alpha_k\langle\bar{x}_k-x^*,e_k\rangle+\alpha_k^2\|\bar{g}_k\|^2\\
		&\overset{(b)}{\leq} d_k - 2\alpha_2\gamma_{k}\alpha_k\langle\bar{x}_k-x^*,\nabla \mathcal{F}(\bar{x}_k)+\bar{b}_k\rangle
		+\frac{2\alpha_2 L\gamma_{k}\alpha_k}{\sqrt{n}}\|\bar{x}_k-x^*\|\| \mathbf{x}_k-\mathbf{1}\bar{x}_k\| \\
		&\hspace{0.5cm} -2\alpha_k\langle\bar{x}_k-x^*,e_k\rangle  +\alpha_k^2\|\bar{g}_k\|^2,
	\end{split}
\end{equation*}
where  $(a)$ is due to (\ref{exp_g_bar}) and $(b)$ is due to Lemma \ref{lipschitz}.
By recursion, we have
\begin{equation}\label{div_ineq}
	\begin{split}
		d_{K+1}&\leq d_0 - 2\alpha_2\sum_{k=0}^{K}\gamma_{k}\alpha_k\langle\bar{x}_k-x^*,\nabla \mathcal{F}(\bar{x}_k)+\bar{b}_k\rangle
		+\frac{2\alpha_2 L}{\sqrt{n}}\sum_{k=0}^{K}\gamma_{k}\alpha_k\|\bar{x}_k-x^*\| \| \mathbf{x}_k-\mathbf{1}\bar{x}_k\|\\ 
		&\hspace{0.5cm} -2\sum_{k=0}^{K}\alpha_k\langle\bar{x}_k-x^*,e_k\rangle+\sum_{k=0}^{K}\alpha_k^2\|\bar{g}_k\|^2.\\
	\end{split}
\end{equation}
By Lemma \ref{martingale}, we have $\lim_{K\rightarrow\infty}\|\sum_{k=0}^{K}\alpha_k e_k\|<\infty$ almost surely. Since $\|\bar{x}_k-x^*\|=\|\frac{1}{n}\sum_{i=1}^{n}(x_{i,k}-x^*)\|\leq\frac{1}{n}\sum_{i=1}^{n}\|x_{i,k}-x^*\|<\infty$ almost surely, hence
\begin{equation}
	\lim_{K\rightarrow\infty}\|\sum_{k=0}^{K}\alpha_k\langle\bar{x}_k-x^*,e_k\rangle\|<\infty.
\end{equation}
From Lemma \ref{grdt_bnd}, we have 
\begin{equation}\label{g_bar}
	\|\bar{g}_k\|^2  =\|\frac{1}{n}\sum_{i=1}^{n}g_{i,k}\|^2\leq\frac{n}{n^2}\sum_{i=1}^{n}\|g_{i,k}\|^2 = \frac{1}{n}\sum_{i=1}^{n}\|g_{i,k}\|^2<\infty, \;\text{almost surely.}
\end{equation}
Then, by
Assumption \ref{step_sizes}, 
\begin{equation}\label{sum1}
	\lim_{K\rightarrow\infty}\sum_{k=0}^{K}\alpha_k^2\|\bar{g}_k\|^2<\infty.
\end{equation}
As stated in Lemma \ref{xx_bar_sum}, we have $\sum_{k=0}^{\infty}\gamma_{k}\alpha_k\| \mathbf{x}_k-\mathbf{1}\bar{x}_k\|<\infty$, adding to $\|\bar{x}_k-x^*\| < \infty$ almost surely, then
\begin{equation}\label{sum2}
	\lim_{K\rightarrow\infty}\sum_{k=0}^{K}\gamma_{k}\alpha_k\|\bar{x}_k-x^*\| \| \mathbf{x}_k-\mathbf{1}\bar{x}_k\|<\infty.
\end{equation}
From the above equations (\ref{div_ineq})-(\ref{sum2}), we conclude that there exists $D$ such that $d_{K+1}\leq D+z_K$, with
\begin{equation}\label{z_K}
	z_K=-2\alpha_2\sum_{k=0}^{K}\gamma_{k}\alpha_k\langle\bar{x}_k-x^*,\nabla \mathcal{F}(\bar{x}_k)+\bar{b}_k\rangle.
\end{equation}
Carefully examine (\ref{b_bar}), we can say that there exists $K_b$, such that for $k\geq K_b$, $\bar{b}_k$ becomes very small where we can find an arbitrary small positive value $\epsilon_b$ such that,
\begin{equation}\label{bias_ineq}
	\|\nabla \mathcal{F}(\bar{x}_k)+\bar{b}_k\|\geq (1-\epsilon_b)\|\nabla \mathcal{F}(\bar{x}_k)\|,\; \forall k\geq K_b,
\end{equation}
leading to $-\langle\bar{x}_k-x^*,\nabla \mathcal{F}(\bar{x}_k)+\bar{b}_k\rangle \leq 0$, due to the convexity of $\mathcal{F}$ in (\ref{cvx}). 

Consequently, for any big $K$, $0\leq d_{K+1}< \infty$ and the limit $\lim_{K\rightarrow\infty} d_{K+1} = \bar{d}$ exists.

Thus, there are 2 cases: $\bar{d}>0$ or $\bar{d}=0$.
Assume hypothesis \textit{H1}) $\bar{d}>0$ to  be valid, i.e., $\bar{x}_k$ does not converge to $x^*$, then $\forall\epsilon_h>0$, $\exists K_h$ such that 
\begin{equation*}
	-\langle\bar{x}_k-x^*,\nabla \mathcal{F}(\bar{x}_k)\rangle<-\epsilon_h,\; \forall k\geq K_h.
\end{equation*}
From (\ref{z_K}) and (\ref{bias_ineq}), we get that $\forall k\geq K_m =\max\{K_b,K_h\}$,
\begin{equation*}
	-\langle\bar{x}_k-x^*,\nabla \mathcal{F}(\bar{x}_k)+\bar{b}_k\rangle<-\epsilon_h(1-\epsilon_b),
\end{equation*}
implying
\begin{equation*}
	\begin{split}
		\lim_{K\rightarrow\infty}-\sum_{k=K_m}^{K}\gamma_{k}\alpha_k\langle\bar{x}_k-x^*,\nabla \mathcal{F}(\bar{x}_k)+\bar{b}_k\rangle
		<-\epsilon_h(1-\epsilon_b)\lim_{K\rightarrow\infty}\sum_{k=K_m}^{K}\gamma_{k}\alpha_k<-\infty
	\end{split}
\end{equation*}
since $\sum \gamma_k\alpha_k$ diverges by Assumption \ref{step_sizes}.
As a result, we get $z_K<-\infty$ and $d_{K+1} < -\infty$. However, by definition in (\ref{divergence}), $d_{K+1}>0$. Accordingly, the hypothesis \textit{H1} cannot be true and the case $\bar{d}=0$ is the valid one. We conclude that $\lim_{k\rightarrow\infty}d_{k}=0$,
$\lim_{k\rightarrow\infty}\nabla\mathcal{F}(\bar{x}_k)=0$, and $\lim_{k\rightarrow\infty}\bar{x}_k=x^*$ almost surely.

\subsection{Proof of Lemma \ref{xx_bar_sum}}\label{xx_bar_sum_proof}
We start by replacing the variables with their algorithmic updates.
\begin{equation}\label{x_fct_previous}
	\begin{split}
		\| \mathbf{x}_{k+1}-\mathbf{1}\bar{x}_{k+1}\|^2 &= \| W \mathbf{x}_k -\alpha_k W \mathbf{y}_k - \mathbf{1}\bar{x}_k +\alpha_k \mathbf{1}\bar{y}_k\|^2 \\
		&= \| W \mathbf{x}_k - \mathbf{1}\bar{x}_k\|^2 -2\alpha_k\langle W \mathbf{x}_k - \mathbf{1}\bar{x}_k, W \mathbf{y}_k-\mathbf{1}\bar{y}_k \rangle +\alpha_k^2 \| W \mathbf{y}_k-\mathbf{1}\bar{y}_k \|^2 \\
		&\overset{(a)}{\leq} \| W \mathbf{x}_k - \mathbf{1}\bar{x}_k\|^2 + \alpha_k[\frac{1-\rho_w^2}{2\rho_w^2\alpha_k}\| W \mathbf{x}_k - \mathbf{1}\bar{x}_k\|^2+ \frac{2\rho_w^2\alpha_k}{1-\rho_w^2}\| W \mathbf{y}_k-\mathbf{1}\bar{y}_k \|^2] \\
		&\hspace{0.5cm}+ \alpha_k^2 \|W \mathbf{y}_k-\mathbf{1}\bar{y}_k \|^2\\
		&\overset{(b)}{\leq} \rho_w^2\| \mathbf{x}_k - \mathbf{1}\bar{x}_k\|^2 + \rho_w^2\alpha_k[\frac{1-\rho_w^2}{2\rho_w^2\alpha_k}\| \mathbf{x}_k - \mathbf{1}\bar{x}_k\|^2+ \frac{2\rho_w^2\alpha_k}{1-\rho_w^2}\|\mathbf{y}_k-\mathbf{1}\bar{y}_k \|^2]  \\
		&\hspace{0.5cm}+\rho_w^2 \alpha_k^2 \| \mathbf{y}_k-\mathbf{1}\bar{y}_k \|^2\\
		&= \frac{1+\rho_w^2}{2}\| \mathbf{x}_k - \mathbf{1}\bar{x}_k\|^2+\alpha_k^2\frac{(1+\rho_w^2)\rho_w^2}{1-\rho_w^2}\| \mathbf{y}_k-\mathbf{1}\bar{y}_k \|^2
	\end{split}
\end{equation}
where $(a)$ is by $-2 \epsilon \times \frac{1}{\epsilon}\langle a,b\rangle = -2 \langle \epsilon a,\frac{1}{\epsilon} b\rangle\leq \epsilon^2\|a\|^2 +\frac{1}{\epsilon^2}\|b\|^2$ and $(b)$ is by Lemma \ref{rho_w}. By induction, we have
\begin{equation}\label{xx_bar_squared}
	\begin{split}
		\| \mathbf{x}_{k+1}-\mathbf{1}\bar{x}_{k+1}\|^2 &\leq (\frac{1+\rho_w^2}{2})^{k+1}\| \mathbf{x}_0 - \mathbf{1}\bar{x}_0\|^2+\frac{2\rho_w^2}{1-\rho_w^2}\sum_{j=0}^{k}(\frac{1+\rho_w^2}{2})^{j+1}\alpha_{k-j}^2\| \mathbf{y}_{k-j}-\mathbf{1}\bar{y}_{k-j} \|^2.
	\end{split}
\end{equation}
Since $\sqrt{a+b}<\sqrt{a}+\sqrt{b}$,
\begin{equation}\label{xx_bar}
	\begin{split}
		\| \mathbf{x}_{k+1}-\mathbf{1}\bar{x}_{k+1}\| &\leq (\frac{1+\rho_w^2}{2})^{\frac{k+1}{2}}\| \mathbf{x}_0 - \mathbf{1}\bar{x}_0\|+\sqrt{\frac{2\rho_w^2}{1-\rho_w^2}}\sum_{j=0}^{k}(\frac{1+\rho_w^2}{2})^{\frac{j+1}{2}}\alpha_{k-j}\| \mathbf{y}_{k-j}-\mathbf{1}\bar{y}_{k-j} \|.
	\end{split}
\end{equation}
By repeatedly replacing the variables with the algorithm's iterations, we see that
\begin{equation*}
	\begin{split}
		\mathbf{y}_{k+1} &= W \mathbf{y}_{k}+\mathbf{g}_{k+1}-\mathbf{g}_{k}\\
		&= W (W\mathbf{y}_{k-1}+\mathbf{g}_{k}-\mathbf{g}_{k-1})+\mathbf{g}_{k+1}-\mathbf{g}_{k}\\
		&= W^2 \mathbf{y}_{k-1}-W\mathbf{g}_{k-1}+(W-I)\mathbf{g}_{k}+\mathbf{g}_{k+1}\\
		&= W^2 (W\mathbf{y}_{k-2}+\mathbf{g}_{k-1}-\mathbf{g}_{k-2})-W\mathbf{g}_{k-1}+(W-I)\mathbf{g}_{k}+\mathbf{g}_{k+1}\\
		&= W^3 \mathbf{y}_{k-2}-W^2 \mathbf{g}_{k-2}+ W(W-I)\mathbf{g}_{k-1}+(W-I)\mathbf{g}_{k}+\mathbf{g}_{k+1}\\
		&= W^3 ( W \mathbf{y}_{k-3}+\mathbf{g}_{k-2}-\mathbf{g}_{k-3})-W^2 \mathbf{g}_{k-2}+ W(W-I)\mathbf{g}_{k-1}+(W-I)\mathbf{g}_{k}+\mathbf{g}_{k+1}\\
		&= W^4 \mathbf{y}_{k-3}-W^3 \mathbf{g}_{k-3}+W^2(W-I)\mathbf{g}_{k-2}+ W(W-I)\mathbf{g}_{k-1}+(W-I)\mathbf{g}_{k}+\mathbf{g}_{k+1}\\
		&= \ldots\\
		&= W^{k+1}\mathbf{y}_0 -W^{k}\mathbf{g}_{0}+\sum_{j=0}^{k-1}W^{j}(W-I)\mathbf{g}_{k-j}+\mathbf{g}_{k+1}\\
		&= W^{k}(W-I)\mathbf{g}_0 +\sum_{j=0}^{k-1}W^{j}(W-I)\mathbf{g}_{k-j}+\mathbf{g}_{k+1}\\
		&=\sum_{j=0}^{k}W^{j}(W-I)\mathbf{g}_{k-j}+\mathbf{g}_{k+1},
	\end{split}
\end{equation*}
\begin{equation*}
	\begin{split}
		\mathbf{y}_{k}-\mathbf{1}\bar{y}_{k} &= \sum_{j=0}^{k-1}W^{j}(W-I)\mathbf{g}_{k-1-j}+\mathbf{g}_{k}-\sum_{j=0}^{k-1}\frac{1}{n}\mathbf{1}\mathbf{1}^T W^{j}(W-I)\mathbf{g}_{k-1-j}-\frac{1}{n}\mathbf{1}\mathbf{1}^T\mathbf{g}_{k} \\	
		&= \sum_{j=0}^{k-1}W^{j}(W-I)\mathbf{g}_{k-1-j}+\mathbf{g}_{k}-\sum_{j=0}^{k-1}\frac{1}{n}\mathbf{1}\mathbf{1}^T (W-I)\mathbf{g}_{k-1-j}-\frac{1}{n}\mathbf{1}\mathbf{1}^T\mathbf{g}_{k}\\
		&= \sum_{j=0}^{k-1}(W^{j}-\frac{1}{n}\mathbf{1}\mathbf{1}^T)(W-I)\mathbf{g}_{k-1-j}+\mathbf{g}_{k}-\frac{1}{n}\mathbf{1}\mathbf{1}^T\mathbf{g}_{k}\\
		&= \sum_{j=0}^{k-1}(W-\frac{1}{n}\mathbf{1}\mathbf{1}^T)^{j}(W-I)\mathbf{g}_{k-1-j}+\mathbf{g}_{k}-\mathbf{1}\bar{g}_{k},\\
	\end{split}
\end{equation*}
where the last equality can be proven by recursion and the fact that the matrix $W$ is doubly stochastic by Assumption \ref{network}:

$(W-\frac{1}{n}\mathbf{1}\mathbf{1}^T)^{j+1}=(W^{j}-\frac{1}{n}\mathbf{1}\mathbf{1}^T)(W-\frac{1}{n}\mathbf{1}\mathbf{1}^T) = W^{j+1}-\frac{1}{n}W^j \mathbf{1}\mathbf{1}^T-\frac{1}{n}\mathbf{1}\mathbf{1}^T W+\frac{1}{n}\mathbf{1}\mathbf{1}^T = W^{j+1}-\frac{2}{n}\mathbf{1}\mathbf{1}^T+\frac{1}{n}\mathbf{1}\mathbf{1}^T =W^{j+1}-\frac{1}{n}\mathbf{1}\mathbf{1}^T$.

Thus,
\begin{equation*}
	\begin{split}
		\|\mathbf{y}_{k}-\mathbf{1}\bar{y}_{k}\|&\leq \sum_{j=0}^{k-1}\|(W-\frac{1}{n}\mathbf{1}\mathbf{1}^T)^{j}(W-I)\mathbf{g}_{k-1-j}\|+\|\mathbf{g}_{k}-\mathbf{1}\bar{g}_{k}\|\\
		&\leq \sum_{j=0}^{k-1}\rho_w^j\|(W-I)\mathbf{g}_{k-1-j}\|+\|\mathbf{g}_{k}-\mathbf{1}\bar{g}_{k}\|.\\
	\end{split}
\end{equation*}
From Lemma \ref{grdt_bnd}, we have $\|\mathbf{g}_{k}\|^2<\infty$ almost surely.
\begin{equation*}
	\begin{split}
		\|\mathbf{g}_{k}-\mathbf{1}\bar{g}_{k}\|^2 &= \sum_{i=1}^{n}\|g_{i,k}-\frac{1}{n}\sum_{j=1}^{n}g_{j,k}\|^2 \\
		&= \sum_{i=1}^{n}\bigg(\|g_{i,k}\|^2-2\langle g_{i,k}, \frac{1}{n}\sum_{j=1}^{n}g_{j,k}\rangle+\|\bar{g}_{k}\|^2\bigg)\\
		&= \|\mathbf{g}_{k}\|^2-2n\|\bar{g}_{k}\|^2+n\|\bar{g}_{k}\|^2\\
		&= \|\mathbf{g}_{k}\|^2-n\|\bar{g}_{k}\|^2 \\
		&\leq \|\mathbf{g}_{k}\|^2 \\
		&\leq M'^2<\infty.	\\
	\end{split}
\end{equation*}
Inserting in the previous inequality, we get
\begin{equation}\label{y_limit}
	\begin{split}
		\|\mathbf{y}_{k}-\mathbf{1}\bar{y}_{k}\|
		&\leq \frac{M'}{1-\rho_w}\|(W-I)\|+M'\\
		&= G <\infty,
	\end{split}
\end{equation}
where we have a geometric sum as $\rho_w<1$. 
\begin{enumerate}
	\item \textbf{Proving $\lim_{k\rightarrow\infty}\|\mathbf{x}_k-\mathbf{1}\bar{x}_k\|^2 =0 $}
	
	Reconsider (\ref{x_fct_previous}),
	\begin{equation}\label{fcts_previous}
		\begin{split}
			\| \mathbf{x}_{k+1}-\mathbf{1}\bar{x}_{k+1}\|^2 
			&\leq \frac{1+\rho_w^2}{2}\| \mathbf{x}_k - \mathbf{1}\bar{x}_k\|^2+\alpha_k^2\frac{(1+\rho_w^2)\rho_w^2}{1-\rho_w^2}\| \mathbf{y}_k-\mathbf{1}\bar{y}_k \|^2\\
			\| \mathbf{x}_{k}-\mathbf{1}\bar{x}_{k}\|^2 
			&\leq \frac{1+\rho_w^2}{2}\| \mathbf{x}_{k-1} - \mathbf{1}\bar{x}_{k-1}\|^2+\alpha_{k-1}^2\frac{(1+\rho_w^2)\rho_w^2}{1-\rho_w^2}\| \mathbf{y}_{k-1}-\mathbf{1}\bar{y}_{k-1} \|^2\\
			&\ldots\\
			\| \mathbf{x}_{1}-\mathbf{1}\bar{x}_{1}\|^2 
			&\leq \frac{1+\rho_w^2}{2}\| \mathbf{x}_{0} - \mathbf{1}\bar{x}_{0}\|^2+\alpha_{0}^2\frac{(1+\rho_w^2)\rho_w^2}{1-\rho_w^2}\| \mathbf{y}_{0}-\mathbf{1}\bar{y}_{0} \|^2.\\
		\end{split}
	\end{equation}
	Adding all inequalities in (\ref{fcts_previous}), we obtain
	\begin{equation*}
		\begin{split}
			\| \mathbf{x}_{k+1}-\mathbf{1}\bar{x}_{k+1}\|^2 
			&\leq -\frac{1-\rho_w^2}{2}\sum_{i=1}^{k}\| \mathbf{x}_i - \mathbf{1}\bar{x}_i\|^2+\frac{1+\rho_w^2}{2}\| \mathbf{x}_{0} - \mathbf{1}\bar{x}_{0}\|^2\\
			&\hspace{0.5cm} +\frac{(1+\rho_w^2)\rho_w^2}{1-\rho_w^2}\sum_{i=0}^{k}\alpha_i^2\| \mathbf{y}_i-\mathbf{1}\bar{y}_i \|^2 \\
			&\overset{(a)}{\leq} -\frac{1-\rho_w^2}{2}\sum_{i=1}^{k}\| \mathbf{x}_i - \mathbf{1}\bar{x}_i\|^2+\frac{1+\rho_w^2}{2}\| \mathbf{x}_{0} - \mathbf{1}\bar{x}_{0}\|^2\\
			&\hspace{0.5cm}+G^2 \frac{(1+\rho_w^2)\rho_w^2}{1-\rho_w^2}\sum_{i=0}^{k}\alpha_i^2,
		\end{split}
	\end{equation*}
	
	with $(a)$ being due to (\ref{y_limit}). Let $k\rightarrow\infty$, then the second and third terms are bounded due to Assumption \ref{step_sizes}. There are then 2 cases:  $\sum\| \mathbf{x}_{i} - \mathbf{1}\bar{x}_{i}\|^2$ either diverges or converges.
	Assume the validity of the hypothesis \textit{H2}) $\sum\| \mathbf{x}_{i} - \mathbf{1}\bar{x}_{i}\|^2$ diverges, i.e., $\sum_{i=1}^{\infty}\| \mathbf{x}_{i} - \mathbf{1}\bar{x}_{i}\|^2\rightarrow\infty$. This leads to
	\begin{equation*}
		\| \mathbf{x}_{k+1}-\mathbf{1}\bar{x}_{k+1}\|^2 <-\infty,
	\end{equation*}
	as $-\frac{1-\rho_w^2}{2}<0$. However, $\| \mathbf{x}_{k+1}-\mathbf{1}\bar{x}_{k+1}\|^2 $ should be positive. Thus, hypothesis \textit{H2} cannot be true and $\sum\| \mathbf{x}_{i} - \mathbf{1}\bar{x}_{i}\|^2$ converges. Hence,  $\lim_{k\rightarrow\infty}\|\mathbf{x}_k-\mathbf{1}\bar{x}_k\|^2 =0 $ almost surely.
	
	\item \textbf{Proving $\sum_{k=0}^{\infty}\gamma_{k}\alpha_k\| \mathbf{x}_k-\mathbf{1}\bar{x}_k\|<\infty$}
	
	Going back to (\ref{xx_bar}),
	\begin{equation*}
		\begin{split}
			\| \mathbf{x}_{k+1}-\mathbf{1}\bar{x}_{k+1}\| &\leq (\frac{1+\rho_w^2}{2})^{\frac{k+1}{2}}\| \mathbf{x}_0 - \mathbf{1}\bar{x}_0\|+G\sqrt{\frac{2\rho_w^2}{1-\rho_w^2}}\sum_{j=0}^{k}(\frac{1+\rho_w^2}{2})^{\frac{j+1}{2}}\alpha_{k-j},\\
		\end{split}
	\end{equation*}
	then substituting into the sum $\sum_{k=0}^{\infty}\gamma_{k}\alpha_k\| \mathbf{x}_k-\mathbf{1}\bar{x}_k\|$,
	\begin{equation*}
		\begin{split}
			&\sum_{k=1}^{\infty}\gamma_{k}\alpha_k ((\frac{1+\rho_w^2}{2})^{\frac{k}{2}}\| \mathbf{x}_0 - \mathbf{1}\bar{x}_0\|+G\sqrt{\frac{2\rho_w^2}{1-\rho_w^2}}\sum_{j=0}^{k-1}(\frac{1+\rho_w^2}{2})^{\frac{j+1}{2}}\alpha_{k-1-j})\\
			\leq &\gamma_{0}\alpha_0\| \mathbf{x}_0 - \mathbf{1}\bar{x}_0\|\frac{\sqrt{1+\rho_w^2}}{\sqrt{2}-\sqrt{1+\rho_w^2}}+ G\sqrt{\frac{2\rho_w^2}{1-\rho_w^2}}\sum_{k=1}^{\infty}\gamma_{k}\alpha_k\sum_{j=0}^{k-1}(\frac{1+\rho_w^2}{2})^{\frac{j+1}{2}}\alpha_{k-1-j},\\
		\end{split}
	\end{equation*}
	where the inequality is due to the fact that $\gamma_k$ and $\alpha_k$ are both decreasing step-sizes and we have a geometric sum of ratio $\sqrt{\frac{1+\rho^2}{2}}<1$. We then study the sums in the second term,
	\begin{equation*}
		\begin{split}
			\sum_{k=1}^{\infty}\gamma_{k}\alpha_k\sum_{j=0}^{k-1}(\frac{1+\rho_w^2}{2})^{\frac{j+1}{2}}\alpha_{k-1-j}
			\leq &\sum_{k=1}^{\infty}\gamma_{k}\sum_{j=0}^{k-1}(\frac{1+\rho_w^2}{2})^{\frac{j+1}{2}}\alpha_{k-1-j}^2\\
			= &\sum_{k=1}^{\infty}\gamma_{k}\sum_{j=1}^{k}(\frac{1+\rho_w^2}{2})^{\frac{k-j+1}{2}}\alpha_{j-1}^2\\
			=
			&\sum_{j=1}^{\infty}\alpha_{j-1}^2\sum_{k=j}^{\infty}\gamma_{k}(\frac{1+\rho_w^2}{2})^{\frac{k-j+1}{2}}\\
			\leq
			&\gamma_{0}\sum_{j=1}^{\infty}\alpha_{j-1}^2\sum_{k=j}^{\infty}(\frac{1+\rho_w^2}{2})^{\frac{k-j+1}{2}}\\
			=
			&\gamma_{0}\frac{\sqrt{1+\rho_w^2}}{\sqrt{2}-\sqrt{1+\rho_w^2}}\sum_{j=1}^{\infty}\alpha_{j-1}^2\\
			< &\infty
		\end{split}
	\end{equation*}
	as $\sum\alpha_k^2$ converges by Assumption \ref{step_sizes}.
	
	Finally, $\sum_{k=0}^{\infty}\gamma_{k}\alpha_k\| \mathbf{x}_k-\mathbf{1}\bar{x}_k\|<\infty$.
\end{enumerate}
\section{Convergence Rate}\label{cv_rate}
As a reminder of the definitions of the parameters in Theorem \ref{cv_rate_th}, consider again $R=\| \mathbf{x}_0 - \mathbf{1}\bar{x}_0\|^2$, $\delta_k=(\frac{1+\rho_w^2}{2})^k$, and $\beta_k=\sum_{j=1}^{k} \delta_j\alpha_{k-j}^2$. From Lemma \ref{grdt_bnd} and (\ref{g_bar}), we let $\bar{M}$ denote the upper bound of $\mathbb{E}[\|\bar{g}_k\|^2]$, and from (\ref{y_limit}), $G$ that of $\|\mathbf{y}_{k}-\mathbf{1}\bar{y}_{k}\|$. Let again $\bar{G}=\frac{2\rho_w^2}{1-\rho_w^2} G$.

Our primary result, stated in the following Lemma, is based on finding a relation between two successive iterations of the expected divergence.
\begin{lemma}\label{avg_div} Let $A=\lambda\alpha_2$, $B=\alpha_1\alpha_3^3$, and $ C=\alpha_2\frac{L^2}{\lambda n}$. Whenever Assumption \ref{strong_convexity} holds, for $k>1$, we get
	\begin{equation}\label{avg_div_simp}
		D_{k+1}\leq (1-A\alpha_k\gamma_{k}) D_k +B\alpha_k\gamma_k^2\sqrt{D_k}+C\alpha_k\gamma_k[\delta_k R+\beta_k \bar{G}]+\alpha_k^2 \bar{M}.
	\end{equation}
	Proof: See \ref{proof_exp_div}.
\end{lemma}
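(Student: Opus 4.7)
The plan is to expand the squared mean divergence from the update $\bar{x}_{k+1}=\bar{x}_k-\alpha_k\bar{g}_k$, take conditional expectation with respect to $\mathcal{H}_k$, and substitute the bias identity (\ref{exp_g_bar}) to obtain
\[
\mathbb{E}[\|\bar{x}_{k+1}-x^*\|^2\mid\mathcal{H}_k]=\|\bar{x}_k-x^*\|^2-2\alpha_2\alpha_k\gamma_k\langle\bar{x}_k-x^*,h(\mathbf{x}_k)+\bar{b}_k\rangle+\alpha_k^2\,\mathbb{E}[\|\bar{g}_k\|^2\mid\mathcal{H}_k].
\]
I would then decompose $h(\mathbf{x}_k)=\nabla\mathcal{F}(\bar{x}_k)+\bigl(h(\mathbf{x}_k)-\nabla\mathcal{F}(\bar{x}_k)\bigr)$ and process the three inner products separately.

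For the $\nabla\mathcal{F}(\bar{x}_k)$ piece, Assumption \ref{strong_convexity} directly yields $-2\lambda\alpha_2\alpha_k\gamma_k\|\bar{x}_k-x^*\|^2$. For the mismatch piece, Cauchy--Schwarz combined with Lemma \ref{lipschitz} upper-bounds the contribution by $\frac{2\alpha_2 L\alpha_k\gamma_k}{\sqrt{n}}\|\bar{x}_k-x^*\|\,\|\mathbf{x}_k-\mathbf{1}\bar{x}_k\|$, and Young's inequality in the form $2ab\leq\lambda a^2+\tfrac{1}{\lambda}b^2$ splits this into $\lambda\alpha_2\alpha_k\gamma_k\|\bar{x}_k-x^*\|^2+\frac{\alpha_2 L^2}{\lambda n}\alpha_k\gamma_k\|\mathbf{x}_k-\mathbf{1}\bar{x}_k\|^2$. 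Combining the two $\|\bar{x}_k-x^*\|^2$ contributions leaves exactly $-A\alpha_k\gamma_k\|\bar{x}_k-x^*\|^2$ with $A=\lambda\alpha_2$, producing the claimed $(1-A\alpha_k\gamma_k)D_k$ coefficient once unconditional expectation is taken.

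For the bias inner product, Cauchy--Schwarz together with the bound (\ref{b_bar}) gives $\alpha_1\alpha_3^3\alpha_k\gamma_k^2\|\bar{x}_k-x^*\|$, and Jensen's inequality converts $\mathbb{E}\|\bar{x}_k-x^*\|$ into $\sqrt{D_k}$, yielding the $B\alpha_k\gamma_k^2\sqrt{D_k}$ term with $B=\alpha_1\alpha_3^3$. To deal with the leftover consensus term $\|\mathbf{x}_k-\mathbf{1}\bar{x}_k\|^2$ (multiplied by $C=\alpha_2 L^2/(\lambda n)$), I would invoke inequality (\ref{xx_bar_squared}) from the proof of Lemma \ref{xx_bar_sum}, which unrolls to $\delta_k R+\frac{2\rho_w^2}{1-\rho_w^2}\sum_{j=1}^{k}\delta_j\alpha_{k-j}^2\|\mathbf{y}_{k-j}-\mathbf{1}\bar{y}_{k-j}\|^2$; the dispersion of the auxiliary variables is bounded uniformly through (\ref{y_limit}), collapsing the sum to $\bar{G}\beta_k$ after the leading factor $\tfrac{2\rho_w^2}{1-\rho_w^2}$ is absorbed into $\bar{G}$. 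Finally, $\alpha_k^2\,\mathbb{E}[\|\bar{g}_k\|^2\mid\mathcal{H}_k]\leq \bar{M}\alpha_k^2$ by Lemma \ref{grdt_bnd} combined with (\ref{g_bar}).

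The main obstacle I anticipate is calibrating Young's inequality so that the split parameter is exactly $\lambda$: any other choice either prevents the strong-convexity term from cleanly absorbing the generated $\|\bar{x}_k-x^*\|^2$ piece or forces an inflation of the constant $C$ beyond $\alpha_2 L^2/(\lambda n)$. Once the constants $A$, $B$, $C$ are aligned and the consensus bound from Lemma \ref{xx_bar_sum} is plugged in, collecting terms and taking full expectation yields (\ref{avg_div_simp}).
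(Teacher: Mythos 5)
Your proposal is correct and mirrors the paper's own proof in \ref{proof_exp_div}: the same expansion of $\|\bar{x}_{k+1}-x^*\|^2$ with the bias identity (\ref{exp_g_bar}), strong convexity for the $\nabla\mathcal{F}(\bar{x}_k)$ term, Lemma \ref{lipschitz} plus Young's inequality with parameter $\lambda$ for the mismatch term, Cauchy--Schwarz with (\ref{b_bar}) and Jensen for the bias term, the consensus bound $\delta_k R+\beta_k\bar{G}$ from (\ref{xx_bar_squared}) and (\ref{y_limit}), and Lemma \ref{grdt_bnd} with (\ref{g_bar}) for the $\bar{M}\alpha_k^2$ term. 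No substantive differences to report.
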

Next, we let
\begin{equation*}
	K_0 = \arg\min_{A\alpha_k\gamma_k<1} k.
\end{equation*}
For the ensuing part, the purpose is to locate a vanishing upper bound of $D_k$, making use of the inequality (\ref{avg_div_simp}). The idea is to propose a decreasing sequence $U_{k+1}\leq U_k$ and suppose that $D_k\leq U_k$, $\forall k\geq K_0$, and then verify that $D_{k+1}\leq U_{k+1}$ by induction. The choice of $U_k$ is the most difficult component as one has to keep in mind the general forms of $\alpha_k$ and $\gamma_k$ in (\ref{avg_div_simp}) and what kind of decisions to take regarding these forms, alongside knowing the exact rate of $\beta_k$. An essential property of $U_k$ is presented in the subsequent lemma.
\begin{lemma}\label{U_k}
	If a decreasing sequence $U_{k+1}\leq U_k$ for $k\geq K_0$ exists such that $D_{k+1}\leq U_{k+1}$ can be deduced from $D_k\leq U_k$ and (\ref{avg_div_simp}), then
	\begin{equation}\label{U_k_ineq}
		U_k\geq \Bigg(\frac{B}{2A}\gamma_{k}+\sqrt{\bigg(\frac{B}{2A}\bigg)^2\gamma_{k}^2+\frac{C}{A}[\delta_k R+\beta_k \bar{G}]+\frac{\bar{M}}{A}\frac{\alpha_k}{\gamma_k}}\Bigg)^2.
	\end{equation}
\end{lemma}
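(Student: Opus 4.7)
The plan is to turn the induction step into a self-consistent requirement on $U_k$ alone, then solve the resulting quadratic inequality. Assume $D_k \leq U_k$ and observe that, because $k \geq K_0$ makes $1 - A\alpha_k\gamma_k$ strictly positive, the map $t \mapsto (1-A\alpha_k\gamma_k)t + B\alpha_k\gamma_k^2\sqrt{t}$ is monotone nondecreasing in $t \geq 0$. Plugging the induction hypothesis into (\ref{avg_div_simp}) then yields
\begin{equation*}
D_{k+1} \leq (1-A\alpha_k\gamma_k)U_k + B\alpha_k\gamma_k^2\sqrt{U_k} + C\alpha_k\gamma_k[\delta_k R + \beta_k \bar{G}] + \alpha_k^2 \bar{M}.
\end{equation*}

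For the induction to conclude $D_{k+1} \leq U_{k+1}$, the right-hand side must be at most $U_{k+1}$. Combining this with the assumed monotonicity $U_{k+1} \leq U_k$, a sufficient (and, given that we only know the generic inequality on $D_k$, essentially necessary) condition is
\begin{equation*}
U_k \geq (1-A\alpha_k\gamma_k)U_k + B\alpha_k\gamma_k^2\sqrt{U_k} + C\alpha_k\gamma_k[\delta_k R + \beta_k \bar{G}] + \alpha_k^2 \bar{M}.
\end{equation*}
Subtracting the first term on the right from both sides and dividing by the positive quantity $A\alpha_k\gamma_k$ gives the scalar inequality
\begin{equation*}
U_k \;\geq\; \tfrac{B\gamma_k}{A}\sqrt{U_k} + \tfrac{C}{A}[\delta_k R + \beta_k \bar{G}] + \tfrac{\bar{M}\alpha_k}{A\gamma_k}.
\end{equation*}

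Setting $x := \sqrt{U_k} \geq 0$, this is the quadratic inequality $x^2 - \tfrac{B\gamma_k}{A}x - \bigl(\tfrac{C}{A}[\delta_k R + \beta_k \bar{G}] + \tfrac{\bar{M}\alpha_k}{A\gamma_k}\bigr) \geq 0$. Its two real roots have opposite signs (the constant term is nonpositive), so the inequality is equivalent to $x$ being at least the positive root, namely
\begin{equation*}
\sqrt{U_k} \;\geq\; \tfrac{B\gamma_k}{2A} + \sqrt{\bigl(\tfrac{B\gamma_k}{2A}\bigr)^2 + \tfrac{C}{A}[\delta_k R + \beta_k \bar{G}] + \tfrac{\bar{M}\alpha_k}{A\gamma_k}}.
\end{equation*}
Squaring both sides yields (\ref{U_k_ineq}).

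The only delicate point is the reduction from the two-step condition $D_{k+1} \leq U_{k+1}$ to the single-index inequality on $U_k$: one has to use both monotonicities (of the recursion map in $D_k$ and of the sequence $U_k$ itself) to collapse indices, and to keep track of the sign of $1 - A\alpha_k\gamma_k$, which is precisely why the restriction $k \geq K_0$ is needed. Once this reduction is in place, the remaining algebra is the routine quadratic-formula step shown above.
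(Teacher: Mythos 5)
Your argument is correct and follows essentially the same route as the paper's proof: substitute $D_k\leq U_k$ into (\ref{avg_div_simp}), force the resulting bound to lie below $U_{k+1}\leq U_k$, and solve the ensuing quadratic inequality in $\sqrt{U_k}$ via the positive root. The only cosmetic quibble is your wording ``sufficient condition'': since the conclusion is a lower bound that $U_k$ \emph{must} satisfy, the step RHS $\leq U_{k+1}\leq U_k$ is being used as a necessary consequence of the hypotheses, exactly as in the paper, and your subsequent algebra is consistent with that reading.
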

An important remark is that the lower bound of $U_k$ in (\ref{U_k_ineq}) is vanishing as $\gamma_{k},\delta_k,\beta_k$, and $\frac{\alpha_k}{\gamma_k}$ are all vanishing. This lower bound provides an insight on the convergence rate of $D_k$ as it cannot be better than that of $\gamma_{k}^2,\delta_k,\beta_k$, and $\frac{\alpha_k}{\gamma_k}$.

Proof: See \ref{proof_U_k}.

The previous Lemma allows us to move forward in confirming the existence of the constants $\varsigma_1$ and $\varsigma_2$ that permit $D_k\leq\varsigma_1^2\gamma_k^2$ and $D_k\leq\varsigma_2^2\frac{\alpha_k}{\gamma_k}$ in Theorem \ref{cv_rate_th}, respectively.
\subsection{Proof of Theorem \ref{cv_rate_th}}\label{th_rate_proof}
\begin{enumerate}
	\item \textbf{Proof of} (\ref{rate_1})
	
	By definition of $\varsigma_1$, $D_{K_0}\leq \varsigma_1^2 \gamma_{K_0}^2$. The next step is to make sure that $D_{k+1}\leq U_{k+1}$ can be obtained from $D_k\leq U_k$, $\forall k\geq K_0$. 	Take $U_k = \varsigma_1^2\gamma_k^2 $, let $D_k\leq U_k$ hold, and substitute in (\ref{avg_div_simp}),
	\begin{equation*}
		D_{k+1}\leq (1-A\alpha_k\gamma_{k}) \gamma_k^2\varsigma_1^2 +B\alpha_k\gamma_k^3\varsigma_1+C\alpha_k\gamma_k[\delta_k R+\beta_k \bar{G}]+\alpha_k^2 \bar{M}.
	\end{equation*}
	We solve $D_{k+1}\leq U_{k+1}$ for $\varsigma_1\in\mathbb{R}^+$
	\begin{equation*}
		(1-A\alpha_k\gamma_{k}) \gamma_k^2\varsigma_1^2 +B\alpha_k\gamma_k^3\varsigma_1+C\alpha_k\gamma_k[\delta_k R+\beta_k \bar{G}]+\alpha_k^2 \bar{M}\leq U_{k+1}=\varsigma_1^2\gamma_{k+1}^2.
	\end{equation*}
	Then, by considering $\kappa_k=\frac{1-(\frac{\gamma_{k+1}}{\gamma_k})^2}{\alpha_k\gamma_k}>0$ as given in (\ref{parameters}),
	\begin{equation*}
		(\kappa_k-A) \varsigma_1^2 	+B\varsigma_1+CR\delta_k \gamma_k^{-2}+C\bar{G}\beta_k\gamma_k^{-2} +\bar{M}\alpha_k\gamma_k^{-3} \leq 0,
	\end{equation*}
	and assuming $\kappa_k - A < 0$, we find a constant $\bar{\varsigma_1}$ such that
	\begin{equation*}
		\varsigma_1\geq\bar{\varsigma_1}=\frac{B}{2(A-\kappa_k)}+\sqrt{\Bigg(\frac{B}{2(A-\kappa_k)}\Bigg)^2+\frac{CR\delta_k \gamma_k^{-2}+C\bar{G}\beta_k\gamma_k^{-2} +\bar{M}\alpha_k\gamma_k^{-3}}{(A-\kappa_k)}},
	\end{equation*}
	keeping in mind that $B$ and $CR\delta_k \gamma_k^{-2}+C\bar{G}\beta_k\gamma_k^{-2} +\bar{M}\alpha_k\gamma_k^{-3}$ are positive by definition.
	Examine the parameters $\sigma_1$, $\sigma_2$, $\sigma_3$ and $\sigma_4$ as they are introduced in (\ref{parameters}), then
	\begin{equation*}
		\bar{\varsigma_1}\leq\frac{B}{2(A-\sigma_1)}+\sqrt{\Bigg(\frac{B}{2(A-\sigma_1)}\Bigg)^2+\frac{CR\sigma_2+C\bar{G}\sigma_3 +\bar{M}\sigma_4}{(A-\sigma_1)}}.
	\end{equation*}
	We conclude that $D_k\leq \varsigma_1^2\gamma_k^2$ where $\varsigma_1$ satisfies the definition (\ref{vartheta}).
	
	\item \textbf{Proof of} (\ref{rate_2})
	
	$D_{K_0}\leq\varsigma_2^2\frac{\gamma_{K_0}}{\alpha_{K_0}}$ by definition of $\varsigma_2$. $\forall k\geq K_0$, let $D_k\leq\varsigma_2^2\frac{\alpha_k}{\gamma_k}$, then
	\begin{equation*}
		D_{k+1}\leq (1-A\alpha_k\gamma_{k}) \varsigma_2^2\frac{\alpha_k}{\gamma_k} +B\alpha_k\gamma_k^2\varsigma_2\sqrt{\frac{\alpha_k}{\gamma_k}}+C\alpha_k\gamma_k[\delta_k R+\beta_k \bar{G}]+\alpha_k^2 \bar{M}.
	\end{equation*}
	Solving $D_{k+1}\leq \varsigma_2^2 \frac{\alpha_{k+1}}{\gamma_{k+1}}$ for $\varsigma_2\in\mathbb{R}^+$,
	\begin{equation*}
		(1-A\alpha_k\gamma_{k}) \varsigma_2^2\frac{\alpha_k}{\gamma_k} +B\alpha_k\gamma_k^2\varsigma_2\sqrt{\frac{\alpha_k}{\gamma_k}}+C\alpha_k\gamma_k[\delta_k R+\beta_k \bar{G}]+\alpha_k^2 \bar{M}\leq \varsigma_2^2 \frac{\alpha_{k+1}}{\gamma_{k+1}}.
	\end{equation*}
	Take $\tau_k = \frac{\frac{\alpha_k}{\gamma_k}-\frac{\alpha_{k+1}}{\gamma_{k+1}}}{\alpha_{k}^2}>0$ as given in (\ref{parameters}), then
	\begin{equation*}
		(\tau_k-A) \varsigma_2^2 +B\gamma_k^{\frac{3}{2}}\alpha_{k}^{-\frac{1}{2}}\varsigma_2+C\gamma_k\alpha_{k}^{-1}[\delta_k R+\beta_k \bar{G}]+\bar{M}\leq 0.
	\end{equation*}
	If $\frac{\alpha_k}{\gamma_k}-\frac{\alpha_{k+1}}{\gamma_{k+1}}<A\alpha_{k}^2$, then $\exists$ $\bar{\varsigma_2}$ such that
	\begin{equation*}
		\varsigma_2\geq\bar{\varsigma_2}=\frac{B\gamma_k^{\frac{3}{2}}\alpha_{k}^{-\frac{1}{2}}}{2(A-\tau_k)}+\sqrt{\bigg(\frac{B\gamma_k^{\frac{3}{2}}\alpha_{k}^{-\frac{1}{2}}}{2(A-\tau_k)}\bigg)^2+\frac{C\gamma_k\alpha_{k}^{-1}[\delta_k R+\beta_k \bar{G}]+\bar{M}}{(A-\tau_k)}}.
	\end{equation*}
	Examine $\sigma_5, \sigma_6, \sigma_7$ and $\sigma_8$ that are defined in (\ref{parameters}), we can say
	\begin{equation*}
		\bar{\varsigma_2}\leq\frac{B\sigma_6}{2(A-\sigma_5)}+\sqrt{\bigg(\frac{B\sigma_6}{2(A-\sigma_5)}\bigg)^2+\frac{C(R\sigma_7+ \bar{G}\sigma_8)+\bar{M}}{(A-\sigma_5)}}.
	\end{equation*}
	We conclude that $D_k\leq \varsigma_2^2\frac{\alpha_k}{\gamma_k}$ with $\varsigma_2$ satisfying (\ref{varrho}).
\end{enumerate}
\subsection{Proof of Lemma \ref{avg_div}}\label{proof_exp_div}
We start by expressing the expected divergence in terms of its previous iteration. 
\begin{equation}\label{main}
	\begin{split}
		D_{k+1} = &\mathbb{E}[\|\bar{x}_{k+1}-x^*\|^2]\\
		= &\mathbb{E}[\|\bar{x}_{k}-\alpha_k \bar{g}_k-x^*\|^2]\\
		= &D_k +\alpha_k^2\mathbb{E}[\|\bar{g}_k\|^2]-2\alpha_k\mathbb{E}[\langle\bar{x}_{k}-x^*, \bar{g}_k\rangle]\\
		\overset{(a)}{=} &D_k +\alpha_k^2\mathbb{E}[\|\bar{g}_k\|^2]-2\alpha_2\alpha_k \gamma_{k}\mathbb{E}[\langle\bar{x}_{k}-x^*, h(\mathbf{x}_k)+ \bar{b}_{k}\rangle]\\
		= &D_k +\alpha_k^2\mathbb{E}[\|\bar{g}_k\|^2]-2\alpha_2\alpha_k \gamma_{k}\mathbb{E}[\langle\bar{x}_{k}-x^*,\mathcal{F}(\bar{x}_k)\rangle]+2\alpha_2\alpha_k \gamma_{k}\mathbb{E}[\langle\bar{x}_{k}-x^*, \mathcal{F}(\bar{x}_k)-h(\mathbf{x}_k)\rangle]\\
		&-2\alpha_2\alpha_k \gamma_{k}\mathbb{E}[\langle\bar{x}_{k}-x^*,\bar{b}_{k}\rangle]\\	
	\end{split}
\end{equation}
where $(a)$ is due to both $\mathbb{E}[e_k|\mathcal{H}_k]=0$ and (\ref{exp_g_bar}):
\begin{equation*}
	\begin{split}
		\mathbb{E}[\langle\bar{x}_{k}-x^*, \bar{g}_k\rangle] &= \mathbb{E}[\langle\bar{x}_{k}-x^*, \bar{g}_k-\mathbb{E}[\bar{g}_k|\mathcal{H}_k]+\mathbb{E}[\bar{g}_k|\mathcal{H}_k]\rangle]\\
		&= \mathbb{E}[\langle\bar{x}_{k}-x^*, e_k\rangle]+\mathbb{E}[\langle\bar{x}_{k}-x^*,\mathbb{E}[\bar{g}_k|\mathcal{H}_k]\rangle]\\
		&= \mathbb{E}_{\mathcal{H}_k}[\mathbb{E}[\langle\bar{x}_{k}-x^*, e_k\rangle|\mathcal{H}_k]]+\mathbb{E}[\langle\bar{x}_{k}-x^*,\mathbb{E}[\bar{g}_k|\mathcal{H}_k]\rangle]\\
		&= 0+\mathbb{E}[\langle\bar{x}_{k}-x^*,\mathbb{E}[\bar{g}_k|\mathcal{H}_k]\rangle].
	\end{split} 
\end{equation*}

From Lemma \ref{grdt_bnd} and (\ref{g_bar}), we have $\mathbb{E}[\|\bar{g}_k\|^2]<\bar{M}$ almost surely, with $\bar{M}=\frac{1}{n}M$ a bounded constant.

By the strong convexity in Assumption \ref{strong_convexity}, we have
\begin{equation}\label{part1}
	\begin{split}
		-2\alpha_2\alpha_k \gamma_{k}\mathbb{E}[\langle\bar{x}_{k}-x^*,\mathcal{F}(\bar{x}_k)\rangle]&\leq -2\lambda\alpha_2\alpha_k \gamma_{k}\mathbb{E}[\|\bar{x}_{k}-x^*\|^2]\\
		&=-2\lambda\alpha_2\alpha_k \gamma_{k} D_k.
	\end{split}
\end{equation}
Next, from Lemma \ref{lipschitz}, we have
\begin{equation*}
	\begin{split}
		2\alpha_2\alpha_k \gamma_{k}\langle\bar{x}_{k}-x^*, \mathcal{F}(\bar{x}_k)-h(\mathbf{x}_k)\rangle
		&\leq 2\alpha_2\alpha_k \gamma_{k}\frac {L}{\sqrt{n}}\|\bar{x}_{k}-x^*\|\| \mathbf{x}_k-\mathbf{1}\bar{x}_k\|\\
		&\overset{(a)}{\leq} \lambda\alpha_2\alpha_k \gamma_{k}\|\bar{x}_{k}-x^*\|^2+\alpha_2\alpha_k \gamma_{k}\frac{ L^2}{\lambda n}\| \mathbf{x}_k-\mathbf{1}\bar{x}_k\|^2,
	\end{split}
\end{equation*}
where $(a)$ is due to $2 \sqrt{\epsilon} \times \frac{1}{\sqrt{\epsilon}}\langle a,b\rangle =2 \langle \sqrt{\epsilon} a,\frac{1}{\sqrt{\epsilon}} b\rangle\leq \epsilon\|a\|^2+\frac{1}{\epsilon}\|b\|^2$.
From (\ref{xx_bar_squared}) and (\ref{y_limit}), we get
\begin{equation*}
	\| \mathbf{x}_k-\mathbf{1}\bar{x}_k\|^2\leq \delta_k R+\beta_k \bar{G}
\end{equation*}
where $R=\| \mathbf{x}_0 - \mathbf{1}\bar{x}_0\|^2$, $\delta_k=(\frac{1+\rho_w^2}{2})^k$, $\beta_k=\sum_{j=1}^{k} \delta_j\alpha_{k-j}^2$, and $\bar{G}=\frac{2\rho_w^2}{1-\rho_w^2} G$.
Hence,
\begin{equation}\label{part2}
	2\alpha_2\alpha_k \gamma_{k}\mathbb{E}[\langle\bar{x}_{k}-x^*, \mathcal{F}(\bar{x}_k)-h(\mathbf{x}_k)\rangle]\leq \lambda\alpha_2\alpha_k \gamma_{k}D_k+\alpha_2\alpha_k \gamma_{k}\frac{L^2}{\lambda n}[\delta_k R+\beta_k \bar{G}].
\end{equation}
From (\ref{b_bar}),
\begin{equation}\label{part3}
	\begin{split}
		-2\alpha_2\alpha_k \gamma_{k}\mathbb{E}[\langle\bar{x}_{k}-x^*,\bar{b}_{k}\rangle]&\leq 2\alpha_2\alpha_k \gamma_{k}\mathbb{E}[\|\bar{x}_{k}-x^*\|\|\bar{b}_{k}\|]\\
		&\leq  \alpha_1\alpha_3^3\alpha_k \gamma_{k}^2 \sqrt{D_k}.
	\end{split}
\end{equation}
Finally, by combining (\ref{main}), (\ref{part1}), (\ref{part2}), and (\ref{part3}), we get (\ref{avg_div_simp}).
\subsection{Proof of Lemma \ref{U_k}}\label{proof_U_k}
Since $1-A\alpha_k\gamma_{k}>0$ when $k\geq K_0$, we may substitute $D_k\leq U_k$ in (\ref{avg_div_simp}),
\begin{equation*}
	D_{k+1}\leq (1-A\alpha_k\gamma_{k}) U_k +B\alpha_k\gamma_k^2\sqrt{U_k}+C\alpha_k\gamma_k[\delta_k R+\beta_k \bar{G}]+\alpha_k^2 \bar{M}.
\end{equation*}
Testing $D_{k+1}\leq U_{k+1}$ in the previous inequality, we get
\begin{equation*}
	(1-A\alpha_k\gamma_{k}) U_k +B\alpha_k\gamma_k^2\sqrt{U_k}+C\alpha_k\gamma_k[\delta_k R+\beta_k \bar{G}]+\alpha_k^2 \bar{M}\leq U_{k+1}\leq U_k
\end{equation*}
\begin{equation}\label{U_k_prb}
	A\gamma_{k} U_k -B\gamma_k^2\sqrt{U_k}-C\gamma_k[\delta_k R+\beta_k \bar{G}]-\alpha_k \bar{M}\geq 0.
\end{equation}
Since $\sqrt{U_k}\geq 0$, the solution of (\ref{U_k_prb}) is given by (\ref{U_k_ineq}), which concludes the proof.

\subsection{Proof of Lemma \ref{beta_k}}\label{proof_beta_k}
We consider the following series, as it affects the maximum possible convergence rate which appears in (\ref{U_k_ineq}) and all inequalities related to $D_k$,
\begin{equation*}
	\beta_k=\sum_{j=1}^{k} \delta_j\alpha_{k-j}^2 = \sum_{j=1}^{k}\bigg(\frac{1+\rho_w^2}{2}\bigg)^j\alpha_{k-j}^2.
\end{equation*}
We then try to find $\beta_{k+1}$ in terms of $\beta_k$, 
\begin{equation*}
	\begin{split}
		\beta_{k+1}=&\sum_{j=1}^{k+1}\bigg(\frac{1+\rho_w^2}{2}\bigg)^j\alpha_{k+1-j}^2 \\
		=&\sum_{j=2}^{k+1}\bigg(\frac{1+\rho_w^2}{2}\bigg)^j\alpha_{k+1-j}^2+\bigg(\frac{1+\rho_w^2}{2}\bigg)\alpha_{k}^2\\ 
		=&\sum_{j=1}^{k}\bigg(\frac{1+\rho_w^2}{2}\bigg)^{j+1}\alpha_{k-j}^2+\bigg(\frac{1+\rho_w^2}{2}\bigg)\alpha_{k}^2\\ 
		=&\bigg(\frac{1+\rho_w^2}{2}\bigg)(\beta_k+\alpha_k^2).
	\end{split}
\end{equation*}
We know that this series has a Q-linear to a Q-sublinear convergence rate as 
\begin{equation*}
	\begin{split}
		\frac{\beta_{k+1}}{\beta_k} &= \bigg(\frac{1+\rho_w^2}{2}\bigg)\Bigg(1+\frac{\alpha_k^2}{\sum_{j=1}^{k}\big(\frac{1+\rho_w^2}{2}\big)^j\alpha_{k-j}^2}\Bigg)\\
		&=\bigg(\frac{1+\rho_w^2}{2}\bigg)+\frac{\alpha_k^2}{\sum_{j=1}^{k}\big(\frac{1+\rho_w^2}{2}\big)^{j-1}\alpha_{k-j}^2}\\
		&=\bigg(\frac{1+\rho_w^2}{2}\bigg)+\frac{\alpha_k^2}{\sum_{j=0}^{k-1}\big(\frac{1+\rho_w^2}{2}\big)^{j}\alpha_{k-1-j}^2}\\
		&=\bigg(\frac{1+\rho_w^2}{2}\bigg)+\frac{1}{\sum_{j=0}^{k-1}\big(\frac{1+\rho_w^2}{2}\big)^{j}\frac{\alpha_{k-1-j}^2}{\alpha_k^2}}\\
		&\overset{(a)}{\leq} \bigg(\frac{1+\rho_w^2}{2}\bigg)+\frac{1}{\sum_{j=0}^{k-1}\big(\frac{1+\rho_w^2}{2}\big)^{j}}\\
		&\overset{(b)}{=} \bigg(\frac{1+\rho_w^2}{2}\bigg)+\frac{1-\big(\frac{1+\rho_w^2}{2}\big)}{1-\big(\frac{1+\rho_w^2}{2}\big)^{k}}\\
	\end{split}
\end{equation*}
where $(a)$ is since $\frac{\alpha_{k-1-j}^2}{\alpha_k^2}\geq 1$ for every $j\in\{0,\ldots,k-1\}$ and $(b)$ is due to the geometric sum $\sum_{j=0}^{k-1}\big(\frac{1+\rho_w^2}{2}\big)^{j} = \frac{1-\big(\frac{1+\rho_w^2}{2}\big)^{k}}{1-\big(\frac{1+\rho_w^2}{2}\big)}$.
Hence,
\begin{equation*}
	\lim_{k\rightarrow\infty}\frac{\beta_{k+1}}{\beta_k}\leq \bigg(\frac{1+\rho_w^2}{2}\bigg)+1-\bigg(\frac{1+\rho_w^2}{2}\bigg)=1
\end{equation*}
since $\lim_{k\rightarrow\infty}\big(\frac{1+\rho_w^2}{2}\big)^k =0$.
Next, to get an idea of how $\beta_k$ converges in terms of $k$, we consider $K_1$ that is defined in (\ref{K_1}).
We know that $K_1$ exists and is finite as $\big(\frac{1+\rho_w^2}{2}\big)^k$ decreases much faster than $\alpha_k^2$.
Taking $\alpha_k$ as that in (\ref{step_sizes_eg}), we know that $0.5<\upsilon_1<1$ and we find the condition on $\upsilon_1$ such that 
\begin{equation*}
	\begin{split}
		\Bigg(\frac{1+\rho_w^2}{2}\Bigg)^k\leq&\alpha_k^2\\
		k\log\Bigg(\frac{1+\rho_w^2}{2}\Bigg)\leq & -2\upsilon_1\log(k+1)\\
		\upsilon_1 \leq &\frac{1}{2}\log\bigg(\frac{2}{1+\rho_w^2}\bigg)\frac{k}{\log(k+1)},\\
	\end{split}
\end{equation*}
which is feasible as $\frac{1}{2}\log\big(\frac{2}{1+\rho_w^2}\big)>0$ and $\frac{k}{\log(k+1)}$ grows very large for $k\geq K_1$, a simple condition is that $1<\frac{1}{2}\log(\frac{2}{1+\rho_w^2})\frac{k}{\log(k+1)}$ which gives 
\begin{equation*}
	K_1 = \arg \min_{2\log^{-1}(\frac{2}{1+\rho_w^2})<k\log^{-1}(k+1)} k.
\end{equation*}
Thus, we can write $\beta_k$ as 
\begin{equation}\label{beta_sum1}
	\begin{split}
		\beta_k &= \sum_{j=1}^{K_1-1}\bigg(\frac{1+\rho_w^2}{2}\bigg)^j\alpha_{k-j}^2 + \sum_{j=K_1}^{k}\bigg(\frac{1+\rho_w^2}{2}\bigg)^j\alpha_{k-j}^2\\
		&\overset{(a)}{\leq} \alpha_{k-K_1}^2\sum_{j=1}^{K_1-1}\bigg(\frac{1+\rho_w^2}{2}\bigg)^j + \sum_{j=K_1}^{k}\alpha_j^2\alpha_{k-j}^2\\
		&\overset{(b)}{=} \alpha_{k-K_1}^2\bigg(\frac{1+\rho_w^2}{1-\rho_w^2}\bigg)\bigg(1-\bigg(\frac{1+\rho_w^2}{2}\bigg)^{K_1-1}\bigg) + \sum_{j=K_1}^{k}\alpha_j^2\alpha_{k-j}^2\\
		&= \beta'\alpha_{k-K_1}^2 + \sum_{j=K_1}^{k}\alpha_j^2\alpha_{k-j}^2,\\
	\end{split}
\end{equation}
where in $(a)$ we used the definition of $K_1$ and the fact that $\alpha_k^2$ is a decreasing step-size, and in $(b)$ the sum of a geometric series.
We know that $\beta'=\big(\frac{1+\rho_w^2}{1-\rho_w^2}\big)\big(1-\big(\frac{1+\rho_w^2}{2}\big)^{K_1-1}\big)$ is finite, then the first sum decreases as $\alpha_{k-K_1}^2=\frac{\alpha_{0}^2}{(k-K_1+1)^{2\upsilon_1}}\sim\frac{1}{k^{2\upsilon_1}}$ with $1<2\upsilon_1<2$, which leaves us with the second sum,
\begin{equation}\label{beta_sum2}
	\begin{split}
		\sum_{j=K_1}^{k}\alpha_j^2\alpha_{k-j}^2 &=\alpha_0^2 \sum_{j=K_1}^{k}\frac{1}{(j+1)^{2\upsilon_1}}\frac{1}{(k-j+1)^{2\upsilon_1}}\\
		&=\alpha_0^2 \sum_{j=K_1+1}^{k+1}\frac{1}{j^{2\upsilon_1}}\frac{1}{(k-j+2)^{2\upsilon_1}}\\
		&=\alpha_0^2 \sum_{j=K_1+1}^{k+1}\frac{1}{j^{2\upsilon_1}}\frac{1}{(k+2)^{2\upsilon_1}(1-\frac{j}{k+2})^{2\upsilon_1}}\\
		&\overset{(a)}{=}\alpha_0^2 \frac{1}{(k+2)^{4\upsilon_1}}\sum_{u\in\mathcal{U}_k}\Big(\frac{1}{u(1-u)}\Big)^{2\upsilon_1}\\
		&=\alpha_0^2 \frac{|\mathcal{U}_k|}{(k+2)^{4\upsilon_1}}\sum_{u\in\mathcal{U}_k}\Big(\frac{1}{u^2(1-u)^2}\Big)^{\upsilon_1}\times\frac{1}{|\mathcal{U}_k|}\\
		&\overset{(b)}{\leq}\alpha_0^2 \frac{|\mathcal{U}_k|}{(k+2)^{4\upsilon_1}}\bigg(\sum_{u\in\mathcal{U}_k}\frac{1}{u^2(1-u)^2}\times\frac{1}{|\mathcal{U}_k|}\bigg)^{\upsilon_1}\\
		&\overset{(c)}{=}\alpha_0^2 \frac{|\mathcal{U}_k|^{1-\upsilon_1}}{(k+2)^{4\upsilon_1}}\bigg(\sum_{u\in\mathcal{U}_k}\frac{1}{u^2}+\frac{1}{(1-u)^2}+\frac{2}{u}+\frac{2}{(1-u)}\bigg)^{\upsilon_1}\\
		&=\alpha_0^2 \frac{(k-K_1+1)^{1-\upsilon_1}}{(k+2)^{4\upsilon_1}}\bigg(\sum_{u\in\mathcal{U}_k}\frac{1}{u^2}+\frac{2}{u}+\sum_{u\in\mathcal{U}_k\setminus\{\frac{k+1}{k+2}\}}\frac{1}{(1-u)^2}+\frac{2}{(1-u)}\\
		&\hspace{8.9cm} +2(k+2)+(k+2)^2\bigg)^{\upsilon_1}\\
		&\overset{(d)}{\leq}\alpha_0^2 \frac{(k-K_1+1)^{1-\upsilon_1}}{(k+2)^{4\upsilon_1}}\bigg((k+2)\int_{\frac{K_1}{k+2}}^{\frac{k+1}{k+2}}\Big(\frac{1}{u^2}+\frac{2}{u}\Big)du\\
		&\hspace{2.1cm}+(k+2)\int_{\frac{K_1+1}{k+2}}^{\frac{k+1}{k+2}}\Big(\frac{1}{(1-u)^2}+\frac{2}{(1-u)}\Big)du
		+2(k+2)+(k+2)^2\bigg)^{\upsilon_1}\\
		&=\alpha_0^2 \frac{(k-K_1+1)^{1-\upsilon_1}}{(k+2)^{3\upsilon_1}}\bigg(\frac{(k+2)(k-K_1+1)}{K_1(k+1)}\\
		&\hspace{3cm}+2\ln\frac{e(k+1)(k-K_1+1)}{K_1}+\frac{(k+2)(k-K_1)}{k-K_1+1}+(k+2)\bigg)^{\upsilon_1}\\
		&\leq\alpha_0^2 \frac{(k-K_1+1)^{1-\upsilon_1}}{(k+2)^{3\upsilon_1}}\bigg(\frac{(k+2)(k+1)}{K_1(k+1)}\\
		&\hspace{3cm}+2\ln\frac{e(k+1)(k-K_1+1)}{K_1}+\frac{(k+2)(k-K_1)}{k-K_1}+(k+2)\bigg)^{\upsilon_1}\\
		&=\alpha_0^2 \frac{(k-K_1+1)^{1-\upsilon_1}}{(k+2)^{3\upsilon_1}}\bigg(2\ln\frac{e(k+1)(k-K_1+1)}{K_1}+(k+2)\Big(2+\frac{1}{K_1}\Big)\bigg)^{\upsilon_1}\\
		&\leq\alpha_0^2 \frac{(k-K_1+1)^{1-\upsilon_1}}{(k+2)^{3\upsilon_1}}\bigg(4\ln\frac{e(k+2)}{K_1}+(k+2)\Big(2+\frac{1}{K_1}\Big)\bigg)^{\upsilon_1}\\
	\end{split}
\end{equation}
\begin{equation*}
	\begin{split}
		&\leq\alpha_0^2 \frac{(k+2)^{1-\upsilon_1}}{(k+2)^{3\upsilon_1}}(k+2)^{\upsilon_1}\bigg(\frac{4}{(k+2)}\ln\frac{e}{K_1}+6+\frac{1}{K_1}\bigg)^{\upsilon_1}\\
		&\leq \beta'' \frac{1}{(k+2)^{3\upsilon_1-1}},\\
	\end{split}
\end{equation*}
where in $(a)$ we changed the summation variable to $u=\frac{j}{k+2}$ and $\mathcal{U}_k=\{\frac{K_1+1}{k+2},\frac{K_1+2}{k+2},\ldots,\frac{k+1}{k+2}\}$, in $(b)$ we used Jensen's inequality $\mathbb{E}[\varphi(x)]\leq\varphi(\mathbb{E}[x])$ for the concave function $\varphi(x) = x^{\upsilon_1}$ with $0.5<\upsilon_1<1$, and $(c)$ is by partial fraction decomposition. In $(d)$, we interpret the sum over $\mathcal{U}_k$ as a Riemann sum in which the function $\frac{1}{u}$ is evaluated at the right endpoint of the interval $[\frac{K_1+i}{k+2},\frac{K_1+i+1}{k+2}]$, for $i=0,1,\ldots,k-K_1$. Since the function $\frac1u$ is monotonically decreasing, it is in fact a \emph{lower} Riemann sum and therefore bounded from above by the integral $\int_{\frac{K_1}{k+2}}^{\frac{k+1}{k+2}}\frac{1}{u}du$. Analogously, we estimate the sum of $\frac{1}{u^2}$ over $\mathcal{U}_k$. Mutatis mutandis, the estimate for the monotonically increasing functions $\frac{1}{1-u}$ and $\frac{1}{(1-u)^2}$ follows.

We have also let $\beta''=\max_{k\geq K_1}\bigg(\frac{4}{(k+2)}\ln\frac{e}{K_1}+6+\frac{1}{K_1}\bigg)^{\upsilon_1} = \bigg(\frac{4}{(K_1+2)}\ln\frac{e}{K_1}+6+\frac{1}{K_1}\bigg)^{\upsilon_1}$. Hence, $\beta'' \frac{1}{(k+2)^{3\upsilon_1-1}}\sim\frac{1}{k^{3\upsilon_1-1}}$ with $0.5<3\upsilon_1-1<2$. 

Since $3\upsilon_1-1<2\upsilon_1$ for $0.5<\upsilon_1<1$, then $\frac{1}{k^{2\upsilon_1}}<\frac{1}{k^{3\upsilon_1-1}}$ for all $k\geq K_1$.

From (\ref{beta_sum1}) and (\ref{beta_sum2}), 
\begin{equation*}
	\begin{split}
		\beta_k &\leq \beta'\frac{1}{(k-K_1+1)^{2\upsilon_1}}+\beta''\frac{1}{(k+2)^{3\upsilon_1-1}}\\
		&\leq \beta'\frac{1}{(k-K_1+1)^{3\upsilon_1-1}}+\beta''\frac{1}{(k-K_1+1)^{3\upsilon_1-1}}\\
		&=(\beta'+\beta'')\frac{1}{(k-K_1+1)^{3\upsilon_1-1}},
	\end{split}
\end{equation*}
and we deduce that $\beta_k$ has a convergence rate of at least $\frac{1}{k^{3\upsilon_1-1}}$, which concludes the proof.

\subsection{Proof of Theorem \ref{special_case}}\label{special_case_proof}
Theorem \ref{cv_rate_th} indicates that the convergence rate is a function of $\upsilon_1$ and $\upsilon_2$, as $\gamma_k^2\propto (k+1)^{-2\upsilon_2}$ and $\frac{\alpha_k}{\gamma_k}\propto(k+1)^{-(-\upsilon_1-\upsilon_2)}$. Nonetheless, we must still verify the validity of the assumptions presented in the theorem, meaning:
\begin{itemize}
	\item Are $\sigma_1 < A$ and $\sigma_5 < A$ fulfilled?
	\item Are $\varsigma_1$ and $\varsigma_2$ bounded?
\end{itemize}
We must remark that in what follows, the analysis is done for $k\geq K_2$. 

Let $\alpha_k$ and $\gamma_k$ have the forms given in (\ref{step_sizes_eg}).
\begin{enumerate}
	\item \textbf{Verifying that} $\sigma_1 < A$ \textbf{and} $\sigma_5 < A$
	
	The idea is to find a bound on $\alpha_0$ and $\gamma_0$ to guarantee $\sigma_1<A$ and $\sigma_5<A$. We start by bounding $\sigma_1$ and $\sigma_5$ from above, i.e.,
	
	\begin{equation*}
		\begin{split}
			\sigma_1=\underset{k\geq K_2}{\max}\; \frac{1-(\frac{\gamma_{k+1}}{\gamma_{k}})^2}{\alpha_k\gamma_k} = \underset{k\geq K_2}{\max}\; \frac{1-(1+\frac{1}{k+1})^{-2\upsilon_2}}{\alpha_0\gamma_0(k+1)^{-\upsilon_1-\upsilon_2}}
		\end{split}
	\end{equation*}
	and
	\begin{equation*}
		\sigma_5 = \underset{k\geq K_2}{\max}\;\frac{1-\frac{\alpha_{k+1}\gamma_{k+1}^{-1}}{\alpha_{k}\gamma_{k}^{-1}}}{\alpha_{k}\gamma_k}=\underset{k\geq K_2}{\max}\;\frac{1-(1+\frac{1}{k+1})^{-(\upsilon_1-\upsilon_2)}}{\alpha_{0}\gamma_{0}(k+1)^{-\upsilon_1-\upsilon_2}}.
	\end{equation*}	
	To do so, we define a function $q(x)=x^{-a}(1-(1-x)^{-b})$ with $a, b, x \in (0, 1]$. Since $x^{-a}\leq x^{-1}$, we have $q(x)\leq x^{-1}(1-(1-x)^{-b})=r(x)$. To further bound $q(x)$, We study the derivative of $r(x)$ as it's simpler to do so,
	\begin{equation*}
		r'(x)=x^{-2}\bigg(((b+1)x+1)(1+x)^{-b-1}-1\bigg) = x^{-2}s(x).
	\end{equation*}
	Hence the sign of $r'(x)$ is that of $s(x)$. We again calculate the derivative of $s(x)$ to find its sign,
	\begin{equation*}
		s'(x) = -b(b+1)x(1 + x)^{-b-2}\leq0
	\end{equation*}
	since $b>0$ and $x>0$. Then, $s(x)$ is a decreasing function of	$x$ over $(0, 1]$. We remark that $\lim_{x\rightarrow 0} s(x)=0$, meaning $s(x)<0$ and $r'(x)<0$, $\forall x\in(0,1]$. Finally,
	\begin{equation*}
		r(x)<\lim_{x\rightarrow 0}r(x)=\frac{1-(1+x)^{-b}}{x}=b,
	\end{equation*}
	and $q(x)\leq r(x)<b$, noting that $\lim_{x\rightarrow 0}q(x)=b$ for $a = 1$.
	We conclude that $\sigma_1<\frac{2\upsilon_2}{\alpha_{0}\gamma_{0}}$ and $\sigma_1<\frac{\upsilon_1-\upsilon_2}{\alpha_{0}\gamma_{0}}$. For $\sigma_1 < A$ and $\sigma_5 < A$ to be valid, we must have 
	\begin{equation}\label{alphagamma}
		\alpha_0\gamma_0\geq \max\{2\upsilon_2,\upsilon_1-\upsilon_2\}/A.
	\end{equation}
	
	\item \textbf{Verifying that} $\varsigma_1$ \textbf{and} $\varsigma_2$ \textbf{are bounded}
	
	The goal is to verify that the constant term in the convergence rate is bounded. Thus, we must check that the lower bounds given in (\ref{vartheta}) and (\ref{varrho}) are indeed finite. We begin by analyzing $\sigma_2$ and $\sigma_7$, i.e.,
	\begin{equation*}
		\sigma_2 = \underset{k\geq K_2}{\max}\;\frac{\delta_k}{\gamma_k^{2}} =\underset{k\geq K_2}{\max}\;\gamma_{0}^{-2}(k+1)^{2\upsilon_2} \bigg(\frac{1+\rho_w^2}{2}\bigg)^k
	\end{equation*}
	and
	\begin{equation*}
		\sigma_7 = \underset{k\geq K_2}{\max}\;\frac{\gamma_k}{\alpha_{k}}\delta_k = \underset{k\geq K_2}{\max}\; \gamma_0\alpha_0^{-1}(k+1)^{\upsilon_1-\upsilon_2}\bigg(\frac{1+\rho_w^2}{2}\bigg)^k.
	\end{equation*}
	To prove that $\sigma_2$ and $\sigma_7$ are finite, we define the function $p(k)=a(k+1)^b\bigg(\frac{1+\rho_w^2}{2}\bigg)^k$, with $a>0$ and $0<b<1$. To find the maximum, we find $k$ such that $p'(k)=0$, meaning
	\begin{equation*}
		a(k+1)^b\bigg(\frac{1+\rho_w^2}{2}\bigg)^k\bigg(\frac{b}{k+1}+\ln\bigg(\frac{1+\rho_w^2}{2}\bigg)\bigg) = 0.
	\end{equation*}
	Hence, 
	\begin{equation*}
		k=b\ln^{-1}\bigg(\frac{2}{1+\rho_w^2}\bigg)-1.
	\end{equation*}
	\begin{itemize}
		\item From here, we can say if $b\ln^{-1}\bigg(\frac{2}{1+\rho_w^2}\bigg)-1\geq K_2$, then
		\begin{equation*}
			\underset{k\geq K_2}{\max}\;p(k)=\frac{a 	b^b}{\big(\frac{2}{1+\rho_w^2}\big)\ln^b\big(\frac{2}{1+\rho_w^2}\big)} e^{-b}<\infty.
		\end{equation*}
		\item Otherwise, if $b\ln^{-1}\bigg(\frac{2}{1+\rho_w^2}\bigg)-1< K_2 \leq k$, then
		$\frac{b}{k+1}+\ln\bigg(\frac{1+\rho_w^2}{2}\bigg)<0$ which gives
		$p'<0$ for $k\geq K_2$, meaning $p$ is strictly decreasing and
		\begin{equation*}
			\underset{k\geq 	K_2}{\max}\;p(k)=p(K_2)=a(K_2+1)^b\bigg(\frac{1+\rho_w^2}{2}\bigg)^{K_2} <\infty.
		\end{equation*}
	\end{itemize}
	We conclude that $\sigma_2<\infty$ and $\sigma_7<\infty$.
	
	Next, we study the finiteness of $\sigma_3$ and $\sigma_8$. From Lemma \ref{beta_k}, we can write
	\begin{equation*}
		\begin{split}
			\sigma_3 = \underset{k\geq K_2}{\max}\; \frac{\beta_k}{\gamma_k^{2}}&\leq \underset{k\geq K_2}{\max}\;(\beta'+ \beta'')\frac{\alpha_{k-K_1}^2}{\gamma_k^2}\\
			&= \underset{k\geq K_2}{\max}\;(\beta'+ \beta'')\alpha_0^2\gamma_0^{-2}\frac{(k+1)^{2\upsilon_2}}{(k-K_1+1)^{3\upsilon_1-1}}
		\end{split}
	\end{equation*}
	and 
	\begin{equation*}
		\begin{split}
			\sigma_8 = \underset{k\geq K_2}{\max}\;\frac{\gamma_k}{\alpha_{k}}\beta_k &\leq \underset{k\geq K_2}{\max}\;(\beta'+\beta'')\frac{\gamma_k}{\alpha_{k}}=\frac{\gamma_k\alpha_{k-K_1}^2}{\alpha_{k}}\\
			&=\underset{k\geq K_2}{\max}\;(\beta'+\beta'')\gamma_0\alpha_0\frac{(k+1)^{\upsilon_1-\upsilon_2}}{(k-K_1+1)^{3\upsilon_1-1}}.
		\end{split}
	\end{equation*}
	We then define a function $q(k)=a\frac{(k+1)^b}{(k-K_1+1)^{3\upsilon_1-1}}$ for $k\geq K_2$, with $a>0$ and $0<b<1$, and we study its derivative
	\begin{equation*}
		q'(k) = a\frac{(b-3\upsilon_1+1)k-bK_1+b-3\upsilon_1+1}{(k+1)^{1-b}(k-K_1+1)^{3\upsilon_1}}.
	\end{equation*}
	We know that $q'<0$, and thus $q$ is strictly decreasing for $k\geq K_2$, when $b-3\upsilon_1+1\leq 0$. 
	
	Hence, 
	\begin{equation}\label{sigma_3}
		\begin{split}
			\sigma_3 =\Bigg\{
			\begin{matrix}
				\alpha_0^2\gamma_0^{-2}\frac{(K_2+1)^{2\upsilon_2}}{(K_2-K_1+1)^{3\upsilon_1-1}},& \text{if}\;2\upsilon_2-3\upsilon_1+1\leq0,\\
				\infty,&\text{if}\;2\upsilon_2-3\upsilon_1+1>0,
			\end{matrix}
		\end{split}
	\end{equation}
	and
	\begin{equation*}
		\begin{split}
			\sigma_8 = (\beta'+\beta'')\gamma_0\alpha_0\frac{(K_2+1)^{\upsilon_1-\upsilon_2}}{(K_2-K_1+1)^{3\upsilon_1-1}}<\infty
		\end{split}
	\end{equation*}
	since $b-3\upsilon_1+1=-\upsilon_2-2\upsilon_1+1<0$ always holds.
	
	We end with the analysis of $\sigma_4$ and $\sigma_6$, i.e.,
	\begin{equation*}
		\sigma_4 = \alpha_0\gamma_0^{-3}\underset{k\geq K_2}{\max}\; (1+k)^{-(\upsilon_1-3\upsilon_2)}
		=\Bigg\{
		\begin{matrix}
			\alpha_0\gamma_0^{-3}(1+K_2)^{-(\upsilon_1-3\upsilon_2)},& \text{if}\;\upsilon_1\geq3\upsilon_2,\\
			\infty,&\text{if}\;\upsilon_1<3\upsilon_2,
		\end{matrix}
	\end{equation*}
	and
	\begin{equation*}
		\sigma_6 = \alpha_0^{-\frac{1}{2}}\gamma_0^{\frac{3}{2}}\underset{k\geq K_2}{\max}\;(1+k)^{\frac{\upsilon_1-3\upsilon_2}{2}}
		=\Bigg\{
		\begin{matrix}
			\alpha_0^{-\frac{1}{2}}\gamma_0^{\frac{3}{2}}(1+K_2)^{\frac{\upsilon_1-3\upsilon_2}{2}},& \text{if}\;\upsilon_1\leq3\upsilon_2,\\
			\infty,&\text{if}\;\upsilon_1>3\upsilon_2.
		\end{matrix}
	\end{equation*}
	There are clearly 3 cases:
	\begin{itemize}
		\item $\upsilon_1>3\upsilon_2$
		
		Thus, $\sigma_4$ is bounded. 
		
		Since now $2\upsilon_2-3\upsilon_1+1<\frac{2}{3}\upsilon_1-3\upsilon_1+1=-\frac{7}{3}\upsilon_1+1<0$ always holds, then $\sigma_3$ (\ref{sigma_3}) and $\varsigma_1$ (by definition) are also bounded provided that $\alpha_0\gamma_0\geq \frac{2\upsilon_2}{A}$ in (\ref{alphagamma}). 
		
		However, $\varsigma_2\rightarrow\infty$ since $\sigma_6\rightarrow \infty$ resulting in a loose upper bound in (\ref{rate_2}). 
		
		To that end, we can write $D_k\leq\Upsilon_1 (1+k)^{-2\upsilon_2}$ with  $\Upsilon_1$ a bounded constant.
		
		\item $\upsilon_1<3\upsilon_2$
		
		Similarly, $\sigma_6$ is bounded while $\sigma_4\rightarrow \infty$. Then, $\exists$ $\Upsilon_2<\infty$, where $D_k\leq\Upsilon_2(1+k)^{-(\upsilon_1-\upsilon_2)}$ provided that $\alpha_0\gamma_0\geq\frac{\upsilon_1-\upsilon_2}{A}$.
		
		\item $\upsilon_1=3\upsilon_2$
		
		Both $\sigma_4$ and $\sigma_6$ are bounded allowing both previous inequalities corresponding to $D_k$ to be valid.
	\end{itemize}
	By this analysis, we conclude the proof of Theorem \ref{special_case}.
\end{enumerate}
\section{Regret Analysis}\label{regret}
Since, by Lemma \ref{lipschitz}, the objective function is $L$-smooth, we can write 
\begin{equation}\label{Fsmooth}
	\mathcal{F}(y)\leq \mathcal{F}(x)+\langle\nabla\mathcal{F}(x), y-x\rangle+\frac{L}{2}\|y-x\|^2,\;\forall x,y\in\mathbb{R}^d.
\end{equation}
To find the regret bound, consider
\begin{equation*}
	\begin{split}
		\mathbb{E}_{\mathcal{H}_k}\bigg[\sum_{k=1}^{K}\mathcal{F}(\bar{x}_k)-\mathcal{F}(x^*)\bigg]
		&\overset{(a)}{\leq} \frac{L}{2} \sum_{k=1}^{K}\mathbb{E}_{\mathcal{H}_k}\bigg[ \|\bar{x}_k-x^*\|^2\bigg]\\
		&= \frac{L}{2}\sum_{k=1}^{K}D_k\\
		&\overset{(b)}{\leq} \Upsilon\frac{L}{2}\sum_{k=1}^{K} \frac{1}{\sqrt{k+1}}\\
		&\overset{(c)}{\leq} \Upsilon\frac{L}{2}\int_{0}^{K} \frac{1}{\sqrt{u+1}} du\\
		&= \Upsilon L (\sqrt{K+1}-1)\\
	\end{split}
\end{equation*}
where $(a)$ is due to $\nabla\mathcal{F}(x^*)=0$, by definition of $x^*$, in (\ref{Fsmooth}), $(b)$ is by Theorem \ref{special_case}, and in $(c)$ we perform the same interpretation of the monotonically decreasing function $\frac{1}{\sqrt{u+1}}$ as in (\ref{beta_sum2}).

\vskip 0.2in
\bibliography{ZO_One_Point_Estimate_with_Distributed_Stochastic_GT_Technique__Mhanna_Assaad}

\end{document}